\date{6 February 2012}
\title[MC Elements]{MC Elements in Pronilpotent DG Lie Algebras}
\author{Amnon Yekutieli}
\address{A. Yekutieli: Department of  Mathematics
Ben Gurion University,
Be'er Sheva 84105,
Israel}
\email{amyekut@math.bgu.ac.il}
\subjclass{}
\thanks{{\em Mathematics Subject Classification} 2000.
Primary: 53D55; Secondary: 17B40, 14B10, 18D05.}
\keywords{DG Lie algebras, L-infinity morphisms, Deligne groupoid,
Deformation quantization.}
\thanks{This research was supported by the Israel Science Foundation.}
\newtheorem{thm}[equation]{Theorem}
\newtheorem{cor}[equation]{Corollary}
\newtheorem{prop}[equation]{Proposition}
\newtheorem{lem}[equation]{Lemma}
\theoremstyle{definition}
\newtheorem{dfn}[equation]{Definition}
\newtheorem{rem}[equation]{Remark}
\newtheorem{exa}[equation]{Example}
\numberwithin{equation}{section}
\newcommand{\iso}{\xrightarrow{\simeq}}
\newcommand{\xar}{\xrightarrow}
\newcommand{\opn}{\operatorname}
\newcommand{\cat}[1]{\operatorname{\mathsf{#1}}}
\newcommand{\ol}{\overline}
\newcommand{\rmitem}[1]{\item[\text{\textup{(#1)}}]}
\newcommand{\mfrak}[1]{\mathfrak{#1}}
\newcommand{\mcal}[1]{\mathcal{#1}}
\newcommand{\mbf}[1]{\mathbf{#1}}
\newcommand{\mrm}[1]{\mathrm{#1}}
\newcommand{\mbb}[1]{\mathbb{#1}}
\newcommand{\smfrac}[2]{\textstyle \frac{#1}{#2}}
\newcommand{\tup}[1]{\textup{#1}}
\newcommand{\bsym}[1]{\boldsymbol{#1}}
\newcommand{\boplus}{\bigoplus\nolimits}
\newcommand{\bosum}{\sum\nolimits}
\newcommand{\ot}{\otimes}
\newcommand{\til}[1]{\tilde{#1}}
\newcommand{\what}[1]{\widehat{#1}}
\newcommand{\hatotimes}{\, \what{\otimes} \,}
\newcommand{\hot}{\hatotimes}
\newcommand{\K}{\mbb{K}}
\newcommand{\N}{\mbb{N}}
\newcommand{\Z}{\mbb{Z}}
\newcommand{\Q}{\mbb{Q}}
\newcommand{\g}{\mfrak{g}}
\newcommand{\m}{\mfrak{m}}
\newcommand{\h}{\mfrak{h}}
\newcommand{\n}{\mfrak{n}}
\renewcommand{\a}{\mfrak{a}}
\newcommand{\bwedge}{{\textstyle \bigwedge}}
\renewcommand{\d}{\mrm{d}}
\newcommand{\sg}{\sigma}
\newcommand{\om}{\omega}
\newcommand{\Om}{\Omega}
\newcommand{\ep}{\epsilon}
\newcommand{\la}{\lambda}
\newcommand{\ga}{\gamma}
\newcommand{\al}{\alpha}
\newcommand{\be}{\beta}
\renewcommand{\th}{\theta}
\newcommand{\pa}{\partial}
\newcommand{\twoto}{\Rightarrow}
\newcommand{\lb}{\linebreak}
\begin{document}

\begin{abstract}
Consider a pronilpotent DG (differential graded) Lie algebra over a field of
characteristic $0$. 
In the first part of the paper we introduce the {\em reduced Deligne groupoid}
associated to this DG Lie algebra. We prove that a DG Lie  
quasi-iso\-morphism between two such algebras induces an equivalence between the
corresponding reduced Deligne groupoids. This extends the famous result of
Goldman-Millson (attributed to Deligne) to the unbounded pronilpotent case.

In the second part of the paper we consider the {\em Deligne $2$-groupoid}. 
We show it exists under more relaxed assumptions than known before (the DG Lie
algebra is either nilpotent or of quasi quantum type). We prove that a DG Lie 
quasi-isomorphism between such DG Lie algebras induces a weak equivalence
between the corresponding Deligne $2$-groupoids. 

In the third part of the paper we prove that an L-infinity quasi-isomorphism
between pronilpotent DG Lie algebras induces a bijection between the sets of
gauge equivalence classes of Maurer-Cartan elements. This extends a result of
Kontsevich and others to the pronilpotent case.
\end{abstract}

\maketitle

\tableofcontents

\setcounter{section}{-1}
\section{Introduction}
\label{sec:Int}

Let $\K$ be a field of characteristic $0$.
By {\em parameter algebra} over $\K$ we mean a complete
local noetherian commutative $\K$-algebra $R$, with maximal ideal $\m$, such
that $R / \m = \K$. The important example is $R = \K[[\hbar]]$, the ring of 
formal power series in the variable $\hbar$.

Let $(R, \m)$ be a parameter algebra, and let 
$\g = \bigoplus_{i \in \Z}\, \g^i$ be a DG
(differential graded) Lie algebra over $\K$.
There is an induced pronilpotent DG Lie algebra 
\[ \m \hot \g = \bigoplus_{i \in \Z}\, \m \hot \g^i . \]
A solution $\om \in \m \hot \g^1$ of the Maurer-Cartan equation 
\[ \d(\om) + \smfrac{1}{2} [\om, \om] = 0 \]
is called an {\em MC element}. The set of MC elements is denoted by 
$\opn{MC}(\g, R)$. There is an action of the {\em gauge group} 
$\opn{exp}(\m \hot \g^0)$
on the set $\opn{MC}(\g, R)$, and we write
\begin{equation}
\ol{\opn{MC}}(\g, R) := \opn{MC}(\g, R) / \opn{exp}(\m \hot \g^0) ,
\end{equation}
the quotient set by this action.

The {\em Deligne groupoid} $\mbf{Del}(\g, R)$, introduced in \cite{GM}, is the
transformation \lb groupoid associated to the action of the group 
$\opn{exp}(\m \hot \g^0)$ on the set $\opn{MC}(\g, R)$. So the set 
$\pi_0 (\mbf{Del}(\g, R))$ of isomorphism classes of objects of this
groupoid equals $\ol{\opn{MC}}(\g, R)$. 
In \cite[Theorem 2.4]{GM} it was proved that if $R$ is artinian, $\g$ and $\h$
are DG Lie algebras concentrated in the degree range $[0, \infty)$
(we refer to this as the ``nonnegative nilpotent case''), and 
$\phi : \g \to \h$ is a DG Lie algebra quasi-isomorphism, then the induced
morphism of groupoids 
\[ \mbf{Del}(\phi, R) : \mbf{Del}(\g, R) \to \mbf{Del}(\h, R) \]
is an equivalence.

We introduce the {\em reduced Deligne groupoid} 
$\mbf{Del}^{\mrm{r}}(\g, R)$,
which is a certain quotient of the Deligne groupoid $\mbf{Del}(\g, R)$;
see Section \ref{sec:red-del} for details. 
If the {\em Deligne 2-groupoid} $\mbf{Del}^{2}(\g, R)$ is defined 
(see below), then 
\begin{equation} \label{eqn:109}
\mbf{Del}^{\mrm{r}}(\g, R) = 
\pi_1 (\mbf{Del}^{2}(\g, R)) .
\end{equation}
However the groupoid $\mbf{Del}^{\mrm{r}}(\g, R)$ always exists. 
This new groupoid also has the property that 
\[ \pi_0 (\mbf{Del}^{\mrm{r}}(\g, R)) = \ol{\opn{MC}}(\g, R) . \]

Here is the first main result of our paper. It is a generalization to the
unbounded pronilpotent case (i.e.\ the DG Lie algebras $\g$ and $\h$ can be
unbounded, and the parameter algebra $R$ doesn't have to be artinian) of
\cite[Theorem 2.4]{GM}. Our proof is similar to that of \cite[Theorem 2.4]{GM}:
we also use obstruction classes. 

\begin{thm} \label{thm:4}
Let $R$ be a parameter algebra over $\K$, and let $\phi : \g \to \h$ be a
DG Lie algebra quasi-isomorphism over $\K$. Then the morphism of groupoids
\[ \mbf{Del}^{\mrm{r}}(\phi, R) : \mbf{Del}^{\mrm{r}}(\g, R) 
\to \mbf{Del}^{\mrm{r}}(\h, R)  \]
is an equivalence. 
\end{thm}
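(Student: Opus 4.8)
The plan is to verify the two standard conditions for a morphism of groupoids to be an equivalence: that $\mbf{Del}^{\mrm{r}}(\phi, R)$ is essentially surjective and that it is fully faithful. Since $\pi_0(\mbf{Del}^{\mrm{r}}(\g, R)) = \ol{\opn{MC}}(\g, R)$, essential surjectivity together with injectivity on isomorphism classes amounts to showing that the induced map $\ol{\opn{MC}}(\g, R) \to \ol{\opn{MC}}(\h, R)$ is a bijection, while full faithfulness reduces to showing that for each MC element $\om$ the map on automorphism groups $\opn{Aut}(\om) \to \opn{Aut}(\phi(\om))$ is an isomorphism. I would organize the whole argument around the $\m$-adic filtration of $R$.

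First I would reduce to the artinian case. Set $R_n := R / \m^{n+1}$; each $R_n$ is artinian with $R_0 = \K$, and $R = \varprojlim_n R_n$, so that $\m \hot \g = \varprojlim_n\, (\m / \m^{n+1}) \ot \g$ as pronilpotent DG Lie algebras, and similarly for $\h$. Consequently $\opn{MC}(\g, R)$, the gauge group, and the groupoid $\mbf{Del}^{\mrm{r}}(\g, R)$ are inverse limits of their counterparts over the $R_n$. The surjection $R_n \surj R_{n-1}$ factors into finitely many small extensions, i.e.\ surjections $p : R' \surj R''$ with kernel $I$ satisfying $\m' \cdot I = 0$; since $R$ is noetherian, each such $I$ is a finite-dimensional $\K$-vector space. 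Thus it suffices to treat a single small extension and then pass to the limit.

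Next comes the inductive step across a small extension $0 \to I \to R' \xar{p} R'' \to 0$. Here $I \ot \g$ is an abelian DG Lie ideal of $\m' \hot \g$, and $H^i(I \ot \g) = I \ot H^i(\g)$ because $I$ is flat over $\K$. The Goldman-Millson obstruction calculus then reads as follows: the obstruction to lifting an MC element from $R''$ to $R'$ lives in $I \ot H^2(\g)$; the ambiguity of a lift, modulo gauge, is a torsor under $I \ot H^1(\g)$; and the automorphism group of an MC element acquires, across the extension, a contribution governed by $I \ot H^0(\g)$ (the $\om$-twisting acts trivially on $I \ot \g$ since $\m' \cdot I = 0$). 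The passage to the \emph{reduced} Deligne groupoid is exactly what makes this last group come out as $H^0$: it is $\pi_1$ of the Deligne $2$-groupoid, so the degree $-1$ part of $\g$ has been divided out. Because $\phi$ is a quasi-isomorphism, every $H^i(\phi) : H^i(\g) \iso H^i(\h)$, hence every $\opn{id}_I \ot H^i(\phi)$, is bijective; this matches the obstruction, difference, and automorphism data on the two sides. A diagram chase with the resulting exact sequences then upgrades an equivalence over $R''$ to an equivalence over $R'$, the base case $R_0 = \K$ being trivial because there $\m \hot \g = 0$.

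The step I expect to be the genuine obstacle — and the one that goes beyond \cite[Theorem 2.4]{GM} — is the passage to the limit in the non-artinian case. An inverse limit of equivalences of groupoids need not be an equivalence: the comparison of $\pi_0(\varprojlim_n \mbf{Del}^{\mrm{r}}(\g, R_n))$ with $\varprojlim_n \ol{\opn{MC}}(\g, R_n)$ is governed by a Milnor-type exact sequence carrying a $\varprojlim^1$ term built from the automorphism towers. The inductive step, however, produces a morphism of towers $\{\mbf{Del}^{\mrm{r}}(\g, R_n)\}_n \to \{\mbf{Del}^{\mrm{r}}(\h, R_n)\}_n$ that is a level-wise equivalence compatible with the transition functors; such a morphism induces isomorphisms on $\varprojlim_n \pi_0$, on $\varprojlim_n \pi_1$, and on the $\varprojlim^1$ terms simultaneously, so the limit map is again an equivalence. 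Here pronilpotence is what guarantees convergence of the exponentials and of the MC equation in the unbounded setting, and keeping the automorphism (that is, $\pi_1$) towers abelian — via the reduction — is what keeps the $\varprojlim^1$ bookkeeping tractable.
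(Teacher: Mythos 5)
Your artinian step is essentially the paper's own (the Goldman--Millson obstruction calculus with $o_2 \in I \ot \mrm{H}^2$, $o_1 \in I \ot \mrm{H}^1$, and automorphisms controlled by $\mrm{H}^0$; the paper uses the single extension $\n = \m^{l(R)}$ with $\m \n = 0$ rather than a chain of small extensions, a cosmetic difference). The genuine gap is in the limit step, exactly where you placed the weight of the argument. You assert that $\mbf{Del}^{\mrm{r}}(\g, R)$ is the inverse limit of the tower $\{ \mbf{Del}^{\mrm{r}}(\g, R_n) \}_n$. This holds for the \emph{unreduced} Deligne groupoid, but is not known (and is probably false) for the reduced one: the morphism set $\opn{G}^{\mrm{r}}(\g,R)(\om,\om')$ is the quotient of $\opn{G}(\g,R)(\om,\om')$ by $N^{\mrm{r}}_{\om} = \exp(\opn{Im} \d_{\om})$, and $\opn{Im}(\d_{\om}) \subset \m \hot \g^0$ is not closed in general, so this quotient does not commute with $\varprojlim$. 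The paper proves only that the canonical map $\opn{G}^{\mrm{r}}(\g,R)(\om,\om') \to \varprojlim_n \opn{G}^{\mrm{r}}(\g,R_n)(\om_n,\om'_n)$ is \emph{surjective} (Lemma \ref{lem:6}(3)) and deliberately never claims injectivity. Consequently your Milnor sequence computes $\pi_0$ and $\pi_1$ of the limit groupoid of the tower, not of $\mbf{Del}^{\mrm{r}}(\g,R)$ itself; compare Remark \ref{rem:1}, where exactly this kind of identification ($\ol{\opn{MC}}(\g,R) \iso \varprojlim_n \ol{\opn{MC}}(\g,R_n)$, from an earlier paper) is retracted as unproven and suspected false, the hidden assumption being closedness of gauge orbits. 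Nor can you retreat to the unreduced tower, where the limit identification does hold: there the level-wise equivalence \emph{fails} in the unbounded case, since the automorphism group of $\om$ is $\exp \bigl( \opn{Ker}(\d_{\om})^0 \bigr)$ rather than $\exp(\mrm{H}^0)$ --- which is precisely why the reduced groupoid was introduced. A secondary error: the reduced automorphism towers are not abelian; by Proposition \ref{prop:2} they are $\exp \bigl( \mrm{H}^0((\m \hot \g)_{\om}) \bigr)$, pronilpotent but with generally nonzero bracket (abelianness appears only at the level of $\pi_2$ of the Deligne $2$-groupoid).

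The paper's proof is engineered to sidestep the quotient-versus-limit problem, and it is instructive to see how. For surjectivity on $\pi_0$ it works in the unreduced groupoid and uses Lemma \ref{lem:1} to lift, at each artinian stage, a \emph{pair} consisting of an MC element and a connecting gauge element, compatibly with the previous stage; completeness of $\opn{MC}(\g,R)$ and of the gauge group then yields the limit. For injectivity on $\pi_0$ it uses the \emph{uniqueness} clause of Lemma \ref{lem:2}: uniqueness is what makes the tower $\{ g_j \}$ automatically coherent, eliminating all $\varprojlim^1$ bookkeeping, after which only the surjectivity of Lemma \ref{lem:6}(3) is needed to descend from the tower to an honest morphism over $R$. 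For the automorphism groups it proves directly at the complete level, by a mapping-cone plus Mittag-Leffler argument (Lemma \ref{lem:6}(1)), that $\phi : (\m \hot \g)_{\om} \to (\m \hot \h)_{\chi}$ is a quasi-isomorphism, and then applies Proposition \ref{prop:2}. To salvage your formulation you would have to prove injectivity of the map of Lemma \ref{lem:6}(3), which runs headlong into the non-closedness of $\opn{Im}(\d_{\om})$; the paper's route is designed so that only the surjective direction is ever used.
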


This is Theorem \ref{thm:2} in the body of the paper. 

A DG Lie algebra $\g =  \bigoplus_{i \in \Z}\, \g^i$ is said to be 
of {\em quantum type} if $\g^i = 0$ for all $i < -1$. A DG Lie algebra 
$\til{\g}$ is said to be of {\em quasi quantum type} if 
there exists a DG Lie quasi-isomorphism $\til{\g} \to \g$, for some 
quantum type DG Lie algebra $\g$. 
Important examples of such DG Lie algebras are given in Example \ref{exa:102}.
In Section \ref{sec:del-2-grpd} we prove that if $R$ is artinian, or if
$\g$ is of quasi quantum type, then the {\em Deligne $2$-groupoid} 
$\mbf{Del}^2(\g, R)$ exists. The original construction (see \cite{Ge}) applied
only to the case when $R$ is artinian and $\g$ is of quantum type. 

Here is the second main result of this paper (repeated as Theorem
\ref{thm:105}):

\begin{thm} \label{thm:107}
Let $R$ be a parameter algebra, let $\g$ and $\h$ be
DG Lie algebras, and let $\phi : \g \to \h$ be a DG Lie algebra
quasi-isomorphism. Assume either of these two conditions holds\tup{:}
\begin{itemize}
\rmitem{i} $R$ is artinian.  
\rmitem{ii} $\g$ and $\h$ are of quasi quantum type.
\end{itemize}  
Then the morphism of $2$-groupoids
\[ \mbf{Del}^2(\phi, R) : \mbf{Del}^2(\g, R) \to \mbf{Del}^2(\h, R) \]
is a weak equivalence.
\end{thm}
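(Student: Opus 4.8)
The plan is to verify the three conditions defining a weak equivalence of $2$-groupoids: that $\mbf{Del}^2(\phi, R)$ be bijective on $\pi_0$ and induce isomorphisms on the homotopy groups $\pi_1$ and $\pi_2$ based at every object. The first two of these are already available. Applying $\pi_1$ to the $2$-groupoids and invoking the identification \eqref{eqn:109}, namely $\pi_1(\mbf{Del}^2(-, R)) = \mbf{Del}^{\mrm{r}}(-, R)$, the morphism $\pi_1(\mbf{Del}^2(\phi, R))$ is exactly $\mbf{Del}^{\mrm{r}}(\phi, R)$. By Theorem \ref{thm:4} the latter is an equivalence of groupoids, hence bijective on isomorphism classes of objects and on automorphism groups; this yields precisely the bijectivity of $\pi_0(\mbf{Del}^2(\phi, R))$ and of each $\pi_1(\mbf{Del}^2(\phi, R), \om)$. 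So the whole problem reduces to the $\pi_2$ statement.

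For each MC element $\om \in \m \hot \g^1$, write $\d_\om := \d + [\om, -]$ for the twisted differential on $\m \hot \g$; the MC equation gives $\d_\om \circ \d_\om = 0$. From the construction of the Deligne $2$-groupoid in Section \ref{sec:del-2-grpd} one has a canonical identification $\pi_2(\mbf{Del}^2(\g, R), \om) \cong H^{-1}(\m \hot \g, \d_\om)$, and likewise on the $\h$ side at the pushforward MC element $\om' := (\m \hot \phi)(\om)$. Since $\phi$ commutes with brackets and differentials, $\m \hot \phi$ is a morphism of complexes $(\m \hot \g, \d_\om) \to (\m \hot \h, \d_{\om'})$ realizing $\pi_2(\mbf{Del}^2(\phi, R), \om)$ as the induced map on $H^{-1}$. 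It therefore suffices to prove the stronger assertion that
\[ \m \hot \phi : (\m \hot \g, \d_\om) \to (\m \hot \h, \d_{\om'}) \]
is a quasi-isomorphism for every $\om \in \opn{MC}(\g, R)$.

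To establish this I would filter by powers of $\m$. On the associated graded of the $\m$-adic filtration the twisting term $[\om, -]$ strictly raises filtration degree (as $\om \in \m \hot \g^1$), so $\opn{gr}(\d_\om) = \opn{gr}(\d)$; thus on $\opn{gr}$ the map becomes $\bigoplus_k (\m^k / \m^{k+1}) \ot_\K \phi$, a direct sum of copies of $\phi$ tensored over $\K$ with finite-dimensional vector spaces, which is a quasi-isomorphism because $\K$-vector spaces are flat. Working first modulo $\m^{k+1}$ the filtration is finite, so the standard comparison theorem shows that $(\m / \m^{k+1}) \ot_\K \phi$ is a quasi-isomorphism between the truncated twisted complexes. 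Passing to the inverse limit over $k$, whose transition maps are surjective, I would invoke the Milnor exact sequence to express each $H^i(\m \hot \g, \d_\om)$ through $\varprojlim_k$ and $\varprojlim^1_k$ of the truncated cohomologies, and match source and target degreewise via the finite-level isomorphisms.

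The hard part is this passage to the inverse limit. Because $\g$ and $\h$ are unbounded, the truncated cohomologies need not vanish in any fixed range, so the $\varprojlim^1$ terms cannot simply be ignored; what rescues the argument is the surjectivity of the transition maps, which forces the Mittag--Leffler condition and lets the Milnor sequences on both sides be compared by the five lemma. Once the displayed map is known to be a quasi-isomorphism, taking $H^{-1}$ gives the bijection on $\pi_2$, and together with the first paragraph this shows $\mbf{Del}^2(\phi, R)$ is a weak equivalence under either hypothesis. I note finally that conditions (i) and (ii) enter only through the existence of the $2$-groupoids, which is settled in Section \ref{sec:del-2-grpd}; the cohomological comparison above is uniform in either case.
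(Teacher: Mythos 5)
Your proof is correct and takes essentially the same route as the paper: $\pi_0$ and $\pi_1$ via the identification $\pi_1(\mbf{Del}^2(-, R)) = \mbf{Del}^{\mrm{r}}(-, R)$ together with Theorem \ref{thm:4}, and $\pi_2$ via the identification with $\mrm{H}^{-1}$ of the twisted complex, which is exactly the paper's formula (\ref{eqn:110}). The only divergence is that where the paper simply cites Lemma \ref{lem:6}(1) for the fact that $\bsym{1}_{\m} \hot \phi : (\m \hot \g)_{\om} \to (\m \hot \h)_{\chi}$ is a quasi-isomorphism, you re-derive it inline; your associated-graded argument at each artinian level matches the paper's induction in Lemma \ref{lem:5}(1), and your Milnor-sequence/five-lemma passage to the inverse limit is equivalent to the paper's acyclic-mapping-cone Mittag--Leffler argument.
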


The proof of Theorem \ref{thm:107} relies on Theorem \ref{thm:4}, via formula 
(\ref{eqn:109}).
Theorem \ref{thm:107} plays a crucial role in our new proof of {\em twisted
deformation quantization}, in the revised version of \cite{Ye4}.

An {\em  $\mrm{L}_{\infty}$ morphism} $\Phi : \g \to \h$ is a sequence 
$\Phi = \{ \phi_j \}_{j \geq 1}$ of $\K$-linear functions 
\[ \phi_j : \bwedge^j \g \to \h \]
that generalizes the notion of DG Lie algebra homomorphism 
$\phi : \g \to \h$. Thus $\phi_1 : \g \to \h$ is a 
DG Lie algebra homomorphism, up to a homotopy given by $\phi_2$; and so on. 
See Section \ref{sec:L-infty} for details. 
The morphism $\Phi$ is called an {\em $\mrm{L}_{\infty}$ quasi-isomorphism}
if $\phi_1 : \g \to \h$ is a quasi-isomorphism. 
The concept of $\mrm{L}_{\infty}$ morphism gained
prominence after the Kontsevich Formality Theorem from 1997 (see
\cite{Ko2}). 

An $\mrm{L}_{\infty}$ morphism $\Phi : \g \to \h$ induces an $R$-multilinear 
$\mrm{L}_{\infty}$ morphism 
\[ \Phi_R = \{ \phi_{R, j} \}_{j \geq 1} : \m \hot \g \to \m \hot \h . \]
Given an element $\om \in \m \hot \g^1$ we write
\begin{equation}
\opn{MC}(\Phi, R) (\om) :=
\sum_{j \geq 1} \, \smfrac{1}{j!} \phi_{R, j}
(\underset{j}{\underbrace{\om, \ldots, \om}}) \in \m \hot \h^1 .
\end{equation}
This sum converges in the $\m$-adic topology of $\m \hot \h^1$. 
It is known that the function $\opn{MC}(\Phi, R)$ sends MC elements to MC
elements, and it respects gauge equivalence (see Propositions \ref{prop:4}
and \ref{prop:5}).
So there is an induced function
$\ol{\opn{MC}}(\Phi, R)$ from 
$\ol{\opn{MC}}(\g, R)$ to $\ol{\opn{MC}}(\h, R).$

Here is the third main result of our paper.

\begin{thm} \label{thm:1}
Let $\g$ and $\h$ be DG Lie algebras, let $R$ be a parameter algebra, and let 
$\Phi : \g \to \h$ be an $\mrm{L}_{\infty}$ quasi-isomorphism, all over the
field $\K$. Then the function 
\[ \ol{\opn{MC}}(\Phi, R) : \ol{\opn{MC}}(\g, R) \to \ol{\opn{MC}}(\h, R)  \]
is bijective.
\end{thm}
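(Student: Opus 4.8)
The plan is to deduce the bijectivity of $\ol{\opn{MC}}(\Phi, R)$ from two formal properties of the assignment $\Phi \mapsto \ol{\opn{MC}}(\Phi, R)$: functoriality under composition of $\mrm{L}_{\infty}$ morphisms, and invariance under $\mrm{L}_{\infty}$ homotopy. Granting these, the argument is short. Since $\K$ is a field of characteristic $0$ and $\phi_1 : \g \to \h$ is a quasi-isomorphism, the $\mrm{L}_{\infty}$ quasi-isomorphism $\Phi$ admits an $\mrm{L}_{\infty}$ quasi-inverse $\Psi : \h \to \g$; that is, an $\mrm{L}_{\infty}$ morphism such that $\Psi \circ \Phi$ and $\Phi \circ \Psi$ are $\mrm{L}_{\infty}$ homotopic to the respective identity morphisms. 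This is the standard decomposition/minimal model theorem for $\mrm{L}_{\infty}$ algebras; it is purely algebraic over $\K$, and uses neither boundedness of $\g, \h$ nor the parameter algebra $R$. Functoriality then gives $\ol{\opn{MC}}(\Psi, R) \circ \ol{\opn{MC}}(\Phi, R) = \ol{\opn{MC}}(\Psi \circ \Phi, R)$, and homotopy invariance identifies the right-hand side with $\ol{\opn{MC}}(\opn{id}_{\g}, R) = \opn{id}$. The symmetric computation gives $\ol{\opn{MC}}(\Phi, R) \circ \ol{\opn{MC}}(\Psi, R) = \opn{id}$, so $\ol{\opn{MC}}(\Phi, R)$ is a bijection with inverse $\ol{\opn{MC}}(\Psi, R)$.

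The real content is therefore in establishing the two properties. Functoriality is essentially bookkeeping: one writes the Taylor coefficients of the composite $\mrm{L}_{\infty}$ morphism and checks that, after applying $- \hot R$ and evaluating at an MC element $\om$, the defining series for $\opn{MC}(\Psi \circ \Phi, R)(\om)$ equals $\opn{MC}(\Psi, R)\bigl( \opn{MC}(\Phi, R)(\om) \bigr)$. Every term here lies in a power $\m^j \hot (-)$ with $j$ growing, so all rearrangements are legitimate in the $\m$-adic topology, and the passage to gauge classes is guaranteed by Propositions \ref{prop:4} and \ref{prop:5}.

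The main obstacle is homotopy invariance in the pronilpotent setting. I would encode an $\mrm{L}_{\infty}$ homotopy between two morphisms $\g \to \h$ as an $\mrm{L}_{\infty}$ morphism $H : \g \to \h \hot \K[t, \d t]$ into the DG Lie algebra of polynomial forms on the line, whose two endpoint evaluations (at $t = 0$ and $t = 1$) recover the given morphisms. Fixing $\om \in \opn{MC}(\g, R)$ and applying $\opn{MC}(H, R)$ produces a polynomial family $\om'(t) \in \m \hot \h^1$ of MC elements, i.e.\ a single MC element of $\h \hot \K[t, \d t]$ over $R$, whose endpoints are $\opn{MC}(\Phi, R)(\om)$ and $\opn{MC}(\Phi', R)(\om)$. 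The classical principle that a polynomial path of MC elements in a nilpotent DG Lie algebra is a gauge equivalence of its endpoints must then be upgraded to the pronilpotent case: integrating the associated infinitesimal gauge flow $\dot{\om}'(t) = \d \ga(t) + [\ga(t), \om'(t)]$, for a suitable $\ga(t) \in \m \hot \h^0$ read off from the $\d t$-component of $\om'(t)$, yields an element $g \in \opn{exp}(\m \hot \h^0)$ carrying $\om'(0)$ to $\om'(1)$.

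The point requiring care, and the place where the argument departs from the Kontsevich/artinian case, is that this integration converges and genuinely lands in the gauge group; unlike the artinian case there is no nilpotency of $\m$ to force the exponential series to terminate. I expect to handle this by the inverse-limit reduction $R = \varprojlim_n \, R / \m^{n+1}$. Working modulo $\m^{n+1}$ the flow is a finite exponential and the statement reduces to the known nilpotent one, producing $g_n \in \opn{exp}\bigl( (\m / \m^{n+1}) \ot \h^0 \bigr)$; because the flow is defined coherently over the tower, the $g_n$ are compatible, and completeness of the gauge group $\opn{exp}(\m \hot \h^0) = \varprojlim_n \opn{exp}\bigl( (\m / \m^{n+1}) \ot \h^0 \bigr)$ turns the coherent family into a genuine gauge transformation over $R$. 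Verifying this coherence—so that no $\varprojlim^1$ obstruction intervenes and the limiting transformation really witnesses the gauge equivalence over $R$—is the pronilpotent heart of the theorem, and it is here that I anticipate the bulk of the technical work.
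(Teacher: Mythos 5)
Your proposal is correct in outline, but it takes a genuinely different route from the paper. The paper never inverts $\Phi$: it applies the bar-cobar construction to replace $\Phi$ by an honest DG Lie algebra homomorphism $\til{\Phi} : \til{\g} \to \til{\h}$ between (much larger) DG Lie algebras, imports Hinich's results to know that $\til{\Phi}$ and the adjunction counit $\zeta_{\g}$ are DG Lie quasi-isomorphisms (Lemma \ref{lem:21} and the proof of Theorem \ref{thm:3}), and then applies Theorem \ref{thm:2} --- the obstruction-theoretic equivalence of reduced Deligne groupoids, proved tower-by-tower \`a la Goldman--Millson --- three times in the commutative square (\ref{eqn:65}), concluding by a two-out-of-three argument. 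You instead work in the homotopy category of $\mrm{L}_{\infty}$ algebras: a homotopy quasi-inverse $\Psi$, functoriality of $\opn{MC}(-, R)$ under composition (which is indeed immediate from the exponential description $\opn{MC}(\Phi, R)(\om) = \log (\Psi_R (\exp(\om)))$ used in the proof of Lemma \ref{lem:18}), and homotopy invariance via integrating the gauge flow. What your route buys: it avoids Theorem \ref{thm:2} entirely for the bijectivity statement, at the cost of importing the homotopy inversion theorem for $\mrm{L}_{\infty}$ quasi-isomorphisms --- standard, purely algebraic over a characteristic-$0$ field, valid unbounded, and of comparable weight to the Hinich input the paper uses. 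What the paper's route buys: Theorem \ref{thm:2} is an equivalence of groupoids, not just a bijection on $\pi_0$, and that extra strength is needed elsewhere (it feeds into Theorem \ref{thm:105} on Deligne $2$-groupoids), so the paper gets Theorem \ref{thm:3} nearly for free once that machinery exists.

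Two remarks on the delicate points you flag. First, the path-to-gauge principle you set out to prove is exactly Lemma \ref{lem:17}(ii)$\Rightarrow$(i) of the paper; but note that the paper's proof of it is circular for your purposes, since it goes through Lemma \ref{lem:16}(2), which invokes Theorem \ref{thm:2} --- so your independent integration argument is genuinely necessary for your route to be self-contained (citing Propositions \ref{prop:4} and \ref{prop:5} as black boxes is legitimate but smuggles Theorem \ref{thm:2} back in, since Proposition \ref{prop:5} rests on Lemma \ref{lem:17}). Second, your coherence worry resolves exactly as you anticipate, and for the right reason: the solution of the non-autonomous flow is canonical, so its reduction modulo $\m^{n+1}$ \emph{is} the level-$n$ solution; the tower $(g_n)$ involves no choices, there is no $\lim^1$ obstruction, and you obtain an actual element $g \in \opn{exp}(\m \hot \h^0)$ rather than a compatible family of gauge classes. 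This distinction is essential: Remark \ref{rem:1} warns that the canonical map from $\ol{\opn{MC}}(\g, R)$ to $\lim_{\leftarrow j} \ol{\opn{MC}}(\g, R/\m^{j})$ is likely not injective (gauge orbits need not be closed), so any variant of your argument producing only level-wise gauge equivalences, without a coherent witness in the group, would fail precisely at the pronilpotent heart of the theorem. Your proposal, as written, does not fall into that trap.
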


This is Theorem \ref{thm:3} in the body of the paper. We emphasize
that this result is in the unbounded pronilpotent case. 
The proof of Theorem \ref{thm:1} goes like this: we use the bar-cobar
construction to reduce to the case of a DG Lie algebra quasi-isomorphism
$\til{\Phi} : \til{\g} \to \til{\h}$; and
then we use Theorem \ref{thm:4}.

Theorem \ref{thm:1} was known in the nilpotent case; see 
\cite[Theorem  4.6]{Ko2}, and  \cite[Theorem 3.6.2]{CKTB}. 
The proof sketched in \cite[Section 4.5]{Ko2} relies on the structure 
of $\mrm{L}_{\infty}$ algebras. The proof in
\cite[Section 3.7]{CKTB} relies on the work of Hinich on the Quillen model
structure of coalgebras. It is not clear whether these methods work also in the
pronilpotent case. 

In this paper we only consider pronilpotent DG Lie algebras of the form 
$\m \hot \g$. 
Presumably Theorems \ref{thm:4}, \ref{thm:107} and \ref{thm:1} can be extended
to a more general setup -- see Remark \ref{rem:11}.

\medskip \noindent
{\bf Acknowledgments.} 
I wish to thank James Stasheff, Vladimir Hinich, Michel Van den Bergh,
William Goldman, Oren Ben Bassat, Marco Manetti and Ronald Brown
for useful conversations.
Thanks also to the referee for reading the paper carefully and providing
several constructive remarks.

\section{Some Facts about DG Lie Algebras}
\label{sec:facts}

Let $\K$ be a field of characteristic $0$.
Given $\K$-modules $V, W$ we write $V \otimes W$ and 
$\opn{Hom}(V, W)$ instead of $V \otimes_{\K} W$ and $\opn{Hom}_{\K}(V, W)$,
respectively.

\begin{dfn}
By {\em parameter algebra} over $\K$ we mean a complete noetherian local
commutative $\K$-algebra $R$, with maximal ideal $\m$, such that $R / \m = \K$.
We call $\m$ a {\em parameter ideal}.
\end{dfn}

The most important example is of course
$R = \K[[\hbar]]$, where $\hbar$ is a variable, called the deformation
parameter.

Note that $R = \K \oplus \m$, so the ring $R$ can be recovered from the
nonunital algebra $\m$. For any $j \in \N$ let $R_j := R / \m^{j+1}$ and 
$\m_j := \m / \m^{j+1}$. So $R_0 = \K$, each $R_j$ is an artinian local ring
with maximal ideal $\m_j$, 
$R \cong \lim_{\leftarrow j} R_j$,
and
$\m \cong  \lim_{\leftarrow j} \m_j$.

Let us fix a parameter algebra $(R, \m)$.
Given an $R$-module $M$, its {\em $\m$-adic completion} is
\[ \what{M} := \lim_{\leftarrow j}\, (R_j \ot_R M) . \]
The module $M$ is called {\em $\m$-adically complete} if the canonical
homomorphism $M \to \what{M}$ is bijective. (Some texts would say that $M$ is
complete and separated.) Since $R$ is noetherian, the $\m$-adic completion
$\what{M}$ of any $R$-module $M$ is $\m$-adically complete; 
see \cite[Corollary 3.5]{Ye5}.

Given an $R$-module $M$ and a $\K$-module $V$ let us write 
\begin{equation} \label{eqn:52}
M \hot V := \lim_{\leftarrow j}\, (R_j \ot_R (M \ot V)) , 
\end{equation}
namely $M \hot V$ is the $\m$-adic completion of the $R$-module $M \ot V$. 
If $W$ is another $\K$-module, then there is a unique $R$-module isomorphism
\[ M \hot (V \ot W) \cong (M \hot V) \hot W \]
that commutes with the canonical homomorphisms from 
$M \ot V \ot W$; hence we shall simply denote this complete $R$-module by 
$M \hot V \hot W$.

Let $\g = \boplus_{i \in \Z} \g^i$ be a DG Lie algebra over $\K$.
(There is no finiteness assumption on $\g$.)
For any $i$ let $\m \hatotimes \g^i$ be the complete tensor product
as in (\ref{eqn:52}).
We get a pronilpotent $R$-linear DG Lie algebra 
\begin{equation} \label{eqn:64}
\m \hatotimes \g := \bigoplus_{i \in \Z}\, \m \hatotimes \g^i , 
\end{equation}
with differential $\d$ and graded Lie bracket  $[-,-]$ induced from $\g$. 

Recall that the Maurer-Cartan equation in $\m \hatotimes \g$ is
\begin{equation}
\d(\omega) + \smfrac{1}{2} [\omega, \omega] = 0
\end{equation}
for $\omega \in \m \hatotimes \g^1$.
A solution of this equation is called an {\em MC element}
of $\m \hatotimes \g$. The set of MC elements is denoted by 
$\opn{MC}(\m \hatotimes \g)$. 
In degree $0$ we have a pronilpotent Lie algebra $\m \hatotimes \g^0$,
so there is an associated pronilpotent group 
$\exp(\m \hatotimes \g^0)$, called the {\em gauge group}, and 
a bijective function 
\[ \exp : \m \hot \g^0 \to \exp(\m \hot \g^0) \]
called the exponential map.

An element $\ga \in \m \hatotimes \g^0$ acts on $\m \hatotimes \g$ by the
derivation 
\begin{equation}
\opn{ad}(\ga)(\al) := [\ga, \al] .
\end{equation}
We view $\opn{ad}(\ga)$ as an element of $R \hot \opn{End}(\g)^0$.
Let $g := \exp(\gamma) \in \exp(\m \hatotimes \g^0)$, and define 
\begin{equation} \label{eqn:11}
\opn{Ad}(g) := \exp (\opn{ad}(\gamma)) 
 = \sum_{i \geq 0} \, \smfrac{1}{i!} \opn{ad}(\ga)^i
\in R \hot \opn{End}(\g)^0 
\end{equation}
(this series converges in the $\m$-adic topology).
The element $\opn{Ad}(g)$ is an $R$-linear automorphism of the graded Lie
algebra $\m \hatotimes \g$. There is an induced group automorphism
$\opn{Ad}(g)$ of the group $\exp(\m \hatotimes \g^0)$, and this automorphism
satisfies the equation 
\begin{equation} \label{eqn:10}
\opn{Ad}(g)(h) = g \circ h \circ g^{-1} 
\end{equation}
for all $h \in \exp(\m \hatotimes \g^0)$.

There is another action of $\ga \in \m \hatotimes \g^0$ on 
$\om \in \m \hatotimes \g^{1}$:
\begin{equation}
\opn{af}(\ga)(\om) := [\ga, \om] - \d(\ga) = 
\opn{ad}(\ga)(\om) - \d(\ga) .
\end{equation}
This is an affine action, namely 
\[ \opn{af}(\ga) \in R \hot \bigl( \opn{End}(\g^1) \ltimes \g^1 \bigr) . \]
Consider the elements
$g := \exp(\ga) \in \exp(\m \hatotimes \g^0)$
and
\begin{equation}
\opn{Af}(g) := \exp(\opn{af}(\ga)) = 
\sum_{i \geq 0} \, \smfrac{1}{i!} \opn{af}(\ga)^i
\in R \hot \bigl( \opn{End}(\g^1) \ltimes \g^1 \bigr)  .
\end{equation}
(The series above converges in the $\m$-adic topology.) 
We get an affine action $\opn{Af}$ of the group $\exp(\m \hatotimes \g^0)$
on the $R$-module $\m \hatotimes \g^{1}$.
For $\om \in \m \hatotimes \g^1$ this becomes
\[ \opn{Af}(g)(\om) = 
\exp(\opn{ad}(\gamma))(\om) + 
\frac{ 1 - \exp(\opn{ad}(\gamma)) }{ \opn{ad}(\gamma) } (\d(\gamma)) . \]
The arguments in \cite[Section 1.3]{GM} (which refer to the case when
$\g^i$ are all finite dimensional over $\K$, $\g^i = 0$ for $i < 0$, and $R$ is
artinian) are valid also in our infinite case (cf.\ \cite[Section 2.2]{Ge}),
and they show that $\opn{Af}(g)$ preserves  the set 
$\opn{MC}(\m \hatotimes \g)$.
(This can be proved also using the method of Lemma \ref{lem:3}.)
We write
\begin{equation}
\ol{\opn{MC}}(\m \hatotimes \g) := 
\frac{ \opn{MC}(\m \hatotimes \g) }{ 
\opn{exp}(\m \hatotimes \g^0) } \ ,
\end{equation}
the quotient set by this action.
Given a homomorphism of DG Lie algebras 
 $\phi : \g \to \h$, and homomorphism of parameter algebras
$f : (R, \m) \to (S, \n)$, there is an induced function 
\begin{equation}
\ol{\opn{MC}}(\phi \ot f) :
\ol{\opn{MC}}(\m \hatotimes \g) \to
\ol{\opn{MC}}(\n \hatotimes \h) .
\end{equation} 

We shall need the following sort of algebraic differential calculus
(which is used a lot implicitly in deformation theory).
Let $\K[t]$ be the polynomial algebra in a variable $t$, and let $M$ be a
$\K$-module. A polynomial 
$f(t) \in \K[t] \otimes M$ defines a 
function $f : \K \to M$, namely for any $\la \in \K$ the element
$f(\la) \in M$ is gotten by substitution $t \mapsto \la$.
We refer to $f : \K \to M$ as a polynomial function, or as 
a polynomial path in $M$. 

Let $\K[\ep] := \K[t] / (t^2)$, where $\ep$ is the class of $t$. 
Given $f(t) \in \K[t] \otimes M$ and $\la \in \K$ we denote by 
$f(\la + \ep) \in \K[\ep] \otimes M$ the result of the substitution 
$t \mapsto \la + \ep$.

\begin{lem} \label{lem:7}
Let $f(t) \in \K[t] \otimes M$. If 
\[ f(\la + \ep)  = f(\la) \]
in $\K[\ep] \otimes M$ for all $\la \in \K$, then $f(\la) = f(0)$
for all $\la \in \K$.
\end{lem}

\begin{proof}
This is an elementary calculation; note that $\K$ has characteristic $0$.
\end{proof}

Given an element $\om \in \opn{MC}(\m \hatotimes \g)$, consider the 
$R$-linear operator (of degree $1$)
\begin{equation} \label{eqn:17}
 \d_{\om} := \d + \opn{ad}(\om) 
\end{equation}
on $\m \hatotimes \g$.

\begin{lem} \label{lem:3}
Let $\om, \om' \in \opn{MC}(\m \hatotimes \g)$,
and let $g \in \opn{exp}(\m \otimes \g^0)$ be such that
$\om' = \opn{Af}(g)(\om)$. 
Then for any $i \in \Z$ the diagram of $R$-modules
\[ \UseTips \xymatrix @C=10ex @R=6ex {
\m \hatotimes \g^{i}
\ar[r]^{\opn{Ad}(g)}
\ar[d]_{\d_{\om}}
&
\m \hatotimes \g^{i}
\ar[d]^{\d_{\om'}}
\\
\m \hatotimes \g^{i+1}
\ar[r]^{\opn{Ad}(g)}
&
\m \hatotimes \g^{i+1}
} \]
is commutative.
\end{lem}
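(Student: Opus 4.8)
The plan is to connect $\om$ to $\om'$ by a polynomial path in the gauge parameter and to reduce the asserted commutativity to an infinitesimal identity, which in turn follows from the graded Leibniz and Jacobi rules. Write $g = \opn{exp}(\ga)$ with $\ga \in \m \hot \g^0$, and for a variable $t$ set $g_t := \opn{exp}(t \ga)$ and $\om_t := \opn{Af}(g_t)(\om)$, so that $g_0 = 1$, $\om_0 = \om$, $g_1 = g$ and $\om_1 = \om'$. Since $\m \hot \g = \lim_{\leftarrow j} \bigl( R_j \ot_R (\m \ot \g) \bigr)$, it suffices to prove the commutativity after applying $R_j \ot_R (-)$ for every $j \in \N$. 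In each such artinian quotient the operator $\opn{ad}(\ga)$ is nilpotent, because each application raises the order in the nilpotent ideal $\m_j$; consequently $\opn{Ad}(g_t) = \sum_{i \geq 0} \smfrac{1}{i!} t^i \opn{ad}(\ga)^i$ is a genuine polynomial in $t$, and so is $\om_t$. Thus all the operators below lie in $\K[t] \ot M$ for a suitable $\K$-module $M$ of operators, and Lemma \ref{lem:7} becomes applicable.

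The core of the argument is an infinitesimal identity valid at an arbitrary base point. For a degree-$0$ element $\ga$ the graded Leibniz rule gives $[\d, \opn{ad}(\ga)] = \opn{ad}(\d \ga)$, while the graded Jacobi identity gives $[\opn{ad}(\om), \opn{ad}(\ga)] = \opn{ad}([\om, \ga])$; adding these yields $[\d_{\om}, \opn{ad}(\ga)] = \opn{ad}(\d \ga + [\om, \ga])$. I would then record its consequence: for $g_{\ep} = \opn{exp}(\ep \ga)$ with $\ep^2 = 0$, one computes $\opn{Af}(g_{\ep})(\om) = \om + \ep \bigl( [\ga, \om] - \d \ga \bigr)$, and expanding both sides of $\d_{\opn{Af}(g_{\ep})(\om)} \circ \opn{Ad}(g_{\ep})$ and $\opn{Ad}(g_{\ep}) \circ \d_{\om}$ to first order, the coefficient of $\ep$ is exactly $[\d_{\om}, \opn{ad}(\ga)] + \opn{ad}([\ga, \om] - \d \ga)$, which vanishes by the identity above. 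In particular this holds for every element $\om$, not merely for MC elements.

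To pass from the infinitesimal to the finite statement, the naive difference $F(t) := \d_{\om_t} \circ \opn{Ad}(g_t) - \opn{Ad}(g_t) \circ \d_{\om}$ turns out to be the wrong quantity: using $g_{t + \ep} = g_{\ep}\, g_t$, $\om_{t + \ep} = \opn{Af}(g_{\ep})(\om_t)$, and the base-point identity applied at $\om_t$, one finds $F(t + \ep) = \opn{Ad}(g_{\ep})\, F(t) = F(t) + \ep\, \opn{ad}(\ga)\, F(t)$, so $F$ satisfies a differential equation rather than being constant. I would therefore work instead with the conjugated operator $\tilde{F}(t) := \opn{Ad}(g_t)^{-1} \circ \d_{\om_t} \circ \opn{Ad}(g_t) - \d_{\om}$; the same substitution, now together with $\opn{Ad}(g_{t + \ep})^{-1} = \opn{Ad}(g_t)^{-1} \opn{Ad}(g_{\ep})^{-1}$, gives $\tilde{F}(t + \ep) = \tilde{F}(t)$ exactly. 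Lemma \ref{lem:7} then forces $\tilde{F}(t) = \tilde{F}(0) = 0$ for all $t$, and evaluation at $t = 1$ yields $\d_{\om'} \circ \opn{Ad}(g) = \opn{Ad}(g) \circ \d_{\om}$, which is the commutativity of the square. I expect the main obstacle to be exactly this last bookkeeping: one must conjugate by $\opn{Ad}(g_t)$ before invoking Lemma \ref{lem:7}, since only then does the spurious first-order term $\ep\, \opn{ad}(\ga)\, F(t)$ cancel and the path become genuinely constant in the sense the lemma requires.
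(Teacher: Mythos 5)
Your proposal is correct and follows essentially the same route as the paper: reduce to the artinian quotients $R_j$, move along the polynomial path $g_t = \exp(t\ga)$, $\om_t = \opn{Af}(g_t)(\om)$, verify the first-order identity $\d_{\om_t + \ep([\ga,\om_t]-\d\ga)} \circ \opn{Ad}(g_\ep) = \opn{Ad}(g_\ep) \circ \d_{\om_t}$ via the graded Leibniz and Jacobi rules, and invoke Lemma \ref{lem:7}. Your conjugated function $\tilde{F}(t) = \opn{Ad}(g_t)^{-1} \circ \d_{\om_t} \circ \opn{Ad}(g_t) - \d_{\om}$ is, up to left composition with the constant invertible operator $\opn{Ad}(g)$, exactly the paper's $f(t) = \opn{Ad}(g(1-t)) \circ \d_{\om(t)} \circ \opn{Ad}(g(t))$, so your observation that the naive difference satisfies an ODE and must be conjugated before Lemma \ref{lem:7} applies is precisely the insight encoded in the paper's $g(1-t)$ factor.
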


\begin{proof}
Since $\m \hatotimes \g^{i}$ and $\m \hatotimes \g^{i+1}$ are
$\m$-adically complete $R$-modules, it suffices to verify this after replacing
$R$ with $R_j$. Therefore we can assume that $R$ is artinian.

Consider the DG Lie algebra 
$\K[t] \otimes \m \otimes \g$. 
Let $\ga := \log(g) \in \m \otimes \g^0$, and define 
\[ g(t) := \exp(t \ga) \in \exp(\K[t] \otimes \m \otimes \g^0)
\subset \K[t] \otimes R \otimes \opn{End}(\g^0) . \]
So $g(0) = 1$ and $g(1) = g$. Next let 
\[ \om(t) := \opn{Af}(g(t))(\om) \in \K[t] \otimes \m \otimes \g^1 , \]
which is an MC element of $\K[t] \otimes \m \otimes \g$, and it 
satisfies $\om(0) = \om$ and $\om(1) = \om'$. Consider the polynomial 
\[ f(t) := \opn{Ad}(g(1-t)) \circ \d_{\om(t)} \circ \opn{Ad}(g(t)) \in 
\K[t] \otimes R \otimes \opn{Hom}(\g^{i}, \g^{i+1}) . \]
It satisfies  
\[ f(0) = \opn{Ad}(g) \circ \d_{\om} \]
and 
\[ f(1) =  \d_{\om'} \circ \opn{Ad}(g)  . \]
We will prove that $f$ is constant.
See diagram below depicting $f(\la)$, $\la \in \K$.

\[ \UseTips \xymatrix @C=10ex @R=7ex {
\m \otimes \g^{i}
\ar[r]^{\opn{Ad}(g(\la))}
\ar@{-->}[d]
&
\m \otimes \g^{i}
\ar[d]^{\d_{\om(\la)}}
\ar@{-->}[r]
&
\m \otimes \g^{i}
\ar@{-->}[d]
\\
\m \otimes \g^{i+1}
\ar@{-->}[r]
&
\m \otimes \g^{i+1}
\ar[r]^{\opn{Ad}(g(1 - \la))}
&
\m \otimes \g^{i+1}
} \]

\medskip
Take any $\la \in \K$. Then 
\[ \begin{aligned}
& f(\la +\ep) - f(\la) = \\
& \qquad \opn{Ad}(g(1 - \ep - \la)) \circ
\bigl( \d_{\om(\la + \ep)} \circ \opn{Ad}(g(\ep)) -  
\opn{Ad}(g(\ep)) \circ \d_{\om(\la)} \bigr) \circ \opn{Ad}(g(\la)) 
\end{aligned} \]
in $\K[\ep] \otimes R \otimes \opn{Hom}(\g^{i}, \g^{i+1})$.
A calculation shows that 
\[ \opn{Ad}(g(\ep)) = 1 + \ep \cdot \opn{ad}(\ga) \in 
\K[\ep] \otimes R \otimes \opn{End}(\g) \]
and
\[ \om(\la + \ep) = \om(\la) + \ep \cdot [\ga, \om(\la)] - 
\ep \cdot \d(\ga) \in \K[\ep] \otimes \m \otimes \g^1 . \]
Hence for any 
$\al \in R \otimes \g^{-1}$ we have
\[ 
\begin{aligned}
& \bigl( \d_{\om(\la + \ep)} \circ \opn{Ad}(g(\ep)) \bigr)(\al) = \\
& \qquad \d(\al) + \ep \cdot \d([\ga, \al]) 
+ [\om(\la + \ep), \al] + \ep [ \om(\la + \ep), [\ga, \al]]
\end{aligned} \]
and 
\[ \begin{aligned}
& \bigl( \opn{Ad}(g(\ep)) \circ \d_{\om(\la} \bigr)(\al) = \\
& \qquad \d(\al) + [\om(\la), \al] + \ep [\ga, \d(\al)] +
\ep [\ga, [\om(\la), \al ]] \ .
\end{aligned} \]
After expanding terms and using the graded Jacobi identity we see that 
\[ \bigl( \d_{\om(\la + \ep)} \circ \opn{Ad}(g(\ep)) \bigr)(\al) =
\bigl( \opn{Ad}(g(\ep)) \circ \d_{\om(\la} \bigr)(\al) \]
in $\K[\ep] \otimes R \otimes \g^0$.
Therefore $f(\la +\ep) = f(\la)$. By Lemma \ref{lem:7} we conclude that 
$f$ is constant.
\end{proof}

\begin{prop} \label{prop:100}
\begin{enumerate}
\item Let $\om \in \opn{MC}(\m \hot \g)$. Then 
$\d_{\om}$ is a degree $1$ derivation of the graded Lie algebra $\m \hot \g$,
and $\d_{\om} \circ \d_{\om} = 0$. We obtain a new DG Lie algebra 
$(\m \hot \g)_{\om}$, with the same Lie bracket $[-,-]$, and a new differential
$\d_{\om}$.

\item Let $g \in \opn{exp}(\m \hot \g^0)$, and let 
$\om' := \opn{Af}(g)(\om) \in \opn{MC}(\m \hot \g)$. Then 
\[ \opn{Ad}(g) : (\m \hot \g)_{\om} \to (\m \hot \g)_{\om'} \]
is an isomorphism of DG Lie algebras.
\end{enumerate}
\end{prop}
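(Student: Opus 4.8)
The plan is to treat the two parts separately: part (1) is a direct computation with the operator $\d_{\om}$ that rests entirely on the Maurer-Cartan equation, while part (2) is essentially a repackaging of Lemma \ref{lem:3}.

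For part (1) I would first note that $\d_{\om} = \d + \opn{ad}(\om)$ is a sum of two operators of degree $1$, each of which is a derivation of the graded bracket: $\d$ is a derivation because $\m \hot \g$ is a DG Lie algebra, and $\opn{ad}(\om)$ is a derivation by the graded Jacobi identity (here $\om$ has degree $1$). A sum of degree $1$ derivations is again a degree $1$ derivation, so $\d_{\om}$ is one. The substance of the statement is the identity $\d_{\om} \circ \d_{\om} = 0$, which I would obtain by expanding
\[ \d_{\om} \circ \d_{\om} = \d^2 + \bigl( \d \circ \opn{ad}(\om) + \opn{ad}(\om) \circ \d \bigr) + \opn{ad}(\om) \circ \opn{ad}(\om) . \]
Here $\d^2 = 0$; the middle term is a graded anticommutator, since both $\d$ and $\opn{ad}(\om)$ are odd, and it equals $\opn{ad}(\d(\om))$ by the derivation property of $\d$ together with $|\om| = 1$; and $\opn{ad}(\om)^2 = \smfrac{1}{2} \opn{ad}([\om, \om])$, using that $\opn{ad}$ is a homomorphism of graded Lie algebras into $\opn{End}(\g)$ together with the oddness of $\om$. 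Assembling these contributions gives
\[ \d_{\om} \circ \d_{\om} = \opn{ad}\bigl( \d(\om) + \smfrac{1}{2} [\om, \om] \bigr) = \opn{ad}(0) = 0 , \]
the last step being exactly the Maurer-Cartan equation for $\om$. This produces the new DG Lie algebra $(\m \hot \g)_{\om}$, whose underlying graded Lie algebra and bracket are unchanged and whose differential is $\d_{\om}$.

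For part (2) the key point is that Lemma \ref{lem:3} already establishes, on each graded piece, the commuting square
\[ \d_{\om'} \circ \opn{Ad}(g) = \opn{Ad}(g) \circ \d_{\om} , \]
which says precisely that $\opn{Ad}(g)$ is a chain map from $(\m \hot \g)_{\om}$ to $(\m \hot \g)_{\om'}$ for the new differentials. It was already recorded in the text (just after equation \eqref{eqn:11}) that $\opn{Ad}(g)$ is an $R$-linear automorphism of the graded Lie algebra $\m \hot \g$, so it respects the bracket and is bijective. A bijective morphism of graded Lie algebras that also commutes with the differentials is an isomorphism of DG Lie algebras; hence $\opn{Ad}(g)$ is the asserted isomorphism, and nothing further is needed.

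The only genuinely delicate point is the sign bookkeeping in part (1): one must check that the two cross terms combine into a graded \emph{anti}commutator (precisely because $\d$ and $\opn{ad}(\om)$ are both odd) and that this anticommutator is $\opn{ad}(\d(\om))$ with the correct sign, rather than $-\opn{ad}(\d(\om))$. Once the graded-derivation and graded-Jacobi conventions are pinned down consistently, the computation collapses onto the Maurer-Cartan equation. No convergence difficulties intervene, since $\d$, $\opn{ad}(\om)$ and $\opn{Ad}(g)$ are all continuous $R$-linear endomorphisms of the $\m$-adically complete module $\m \hot \g$.
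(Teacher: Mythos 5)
Your proposal is correct and follows essentially the same route as the paper: part (1) is the standard computation $\d_{\om}^2 = \opn{ad}\bigl( \d(\om) + \smfrac{1}{2}[\om,\om] \bigr) = 0$, which the paper dismisses as ``well known (and very easy to check)'', and part (2) combines Lemma \ref{lem:3} with the fact that $\opn{Ad}(g) = \exp(\opn{ad}(\ga))$ is an automorphism of the graded Lie algebra, exactly as in the paper's proof. Your explicit sign bookkeeping in part (1) merely fills in the verification the paper omits.
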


\begin{proof}
(1) This is well known (and very easy to check). 

\medskip \noindent
(2) Let $\ga := \log(g) \in \m \hot \g^0$.
Since $\opn{ad}(\ga)$ is a derivation of the graded Lie algebra 
$\m \hot \g$, it follows that 
$\opn{Ad}(g) = \exp(\opn{ad}(\ga))$ is an automorphism of 
$\m \hot \g$. By Lemma \ref{lem:3} this automorphism exchanges $\d_{\om}$
\and $\d_{\om'}$.
\end{proof}

\section{The Reduced Deligne Groupoid}
\label{sec:red-del}

As before, $\K$ is a field of characteristic $0$,
$(R, \m)$ is a parameter algebra over $\K$, and 
$\g = \boplus_{i \in \Z} \g^i$ is a DG Lie algebra over $\K$. 

Let us write 
\begin{equation} \label{eqn:44}
\opn{MC}(\g, R) := \opn{MC}(R \hatotimes \g)
\end{equation}
and 
\begin{equation} \label{eqn:45}
\opn{G}(\g, R) := \exp(\m \hatotimes \g^0) .
\end{equation} 
Given $\om, \om' \in \opn{MC}(\g, R)$, let
\begin{equation} \label{eqn:43}
\opn{G}(\g, R)(\om, \om') :=
\{ g \in \opn{G}(\g, R) \mid \opn{Af}(g)(\om) = \om' \} .
\end{equation}
As in \cite{GM} we define the {\em Deligne groupoid} 
$\mbf{Del}(\g, R)$ to be the transformation \lb groupoid
associated to the action of the gauge group $\opn{G}(\g, R)$ on
the set $\opn{MC}(\g, R)$. 
So the set of objects of $\mbf{Del}(\g, R)$ is $\opn{MC}(\g, R)$, and the set
of morphisms $\om \to \om'$ in this groupoid is 
$\opn{G}(\g, R)(\om, \om')$. Identity morphisms and composition in the
groupoid are those of the group $\opn{G}(\g, R)$. 

Now suppose $\om, \om' \in \opn{MC}(\g, R)$ and 
$g \in \opn{G}(\g, R)(\om, \om')$. 
Since  
\[ \opn{Af}(g \circ h \circ g^{-1}) = 
\opn{Af}(g) \circ \opn{Af}(h) \circ \opn{Af}(g)^{-1}  \]
for any $h$, and in view of (\ref{eqn:10}), there is a group isomorphism
\begin{equation}
\opn{Ad}(g) : \opn{G}(\g, R)(\om, \om) \to \opn{G}(\g, R)(\om', \om') .
\end{equation}

Given $\om \in \opn{MC}(\g, R)$ there is the derivation 
$\d_{\om}$ of formula (\ref{eqn:17}). Let us define
\begin{equation} \label{eqn:14}
\a^{\mrm{r}}_{\om} := \opn{Im} \bigl( \d_{\om} : 
\m \hatotimes \g^{-1} \to  \m \hatotimes \g^0 \bigr)
\end{equation}
and 
\begin{equation} \label{eqn:15}
(\m \hatotimes \g^0)(\om) := \opn{Ker} \bigl( \d_{\om} : 
\m \hatotimes \g^{0} \to  \m \hatotimes \g^1 \bigr) . 
\end{equation}
These are $R$-submodules of $\m \hatotimes \g^0$.

\begin{lem} \label{lem:4}
Let $\om \in \opn{MC}(\g, R)$.
Consider the bijection of sets
\[ \exp : \m \hatotimes \g^0 \iso \exp(\m \hatotimes \g^0) = 
\opn{G}(\g, R) . \]
\begin{enumerate}
\item The module 
$(\m \hatotimes \g^0)(\om)$ is a Lie subalgebra of 
$\m \hatotimes \g^0$, and 
\[ \exp \bigl( (\m \hatotimes \g^0)(\om) \bigr) = 
\opn{G}(\g, R)(\om, \om)  \]
as subsets of $\opn{G}(\g, R)$.

\item The module $\a^{\mrm{r}}_{\om}$ is a Lie ideal of the Lie algebra 
$(\m \hatotimes \g^0)(\om)$, and the subset
\[ N^{\mrm{r}}_{\om} = N^{\mrm{r}}(\g, R)_{\om} :=
\exp(\a^{\mrm{r}}_{\om}) \]
is a normal subgroup of $\opn{G}(\g, R)(\om, \om)$.

\item Let $g \in \opn{G}(\g, R)$ and
$\om' := \opn{Af}(g)(\om)$. Then 
\[ \opn{Ad}(g) \bigl( N^{\mrm{r}}_{\om} \bigr) = N^{\mrm{r}}_{\om'} . \]
\end{enumerate}
\end{lem}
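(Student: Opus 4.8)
The plan is to dispatch the three parts in order, concentrating the effort on the stabilizer identification in part~(1). Throughout I set $\ga := \log(g) \in \m \hot \g^0$. \emph{Part (1).} That $(\m \hot \g^0)(\om) = \opn{Ker}(\d_{\om})$ in degree $0$ is a Lie subalgebra is immediate from Proposition~\ref{prop:100}(1): since $\d_{\om}$ is a derivation of $[-,-]$, the identity $\d_{\om}([\al,\be]) = [\d_{\om}(\al),\be] + [\al,\d_{\om}(\be)]$ shows that $\opn{Ker}(\d_{\om})$ is closed under the bracket. The substantive claim is $\exp\bigl((\m \hot \g^0)(\om)\bigr) = \opn{G}(\g,R)(\om,\om)$. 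I would prove this by starting from the explicit formula for $\opn{Af}(g)(\om)$ recorded just before Lemma~\ref{lem:3} and imposing $\opn{Af}(g)(\om) = \om$. Writing $P := \frac{1 - \exp(\opn{ad}(\ga))}{\opn{ad}(\ga)}$, a power series in $\opn{ad}(\ga)$, and using $1 - \exp(\opn{ad}(\ga)) = \opn{ad}(\ga) \circ P$ together with $\d_{\om}(\ga) = \d(\ga) - [\ga,\om]$, the equation $\opn{Af}(g)(\om) = \om$ collapses to $P(\d_{\om}(\ga)) = 0$.

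The one point needing care in the pronilpotent setting is the passage from $P(\d_{\om}(\ga)) = 0$ to $\d_{\om}(\ga) = 0$, and this is the main obstacle. It rests on invertibility of $P$: since $\ga \in \m \hot \g^0$ the operator $\opn{ad}(\ga)$ is topologically nilpotent, and the power series $\smfrac{1 - \exp(x)}{x} = -1 - \smfrac{1}{2}x - \cdots$ has invertible leading term $-1$, so $P = -\opn{id} + (\text{operator valued in } \m)$ converges and is an invertible $R$-linear operator on $\m \hot \g^0$ in the $\m$-adic topology. Injectivity of $P$ then forces $\d_{\om}(\ga) = 0$, i.e.\ $\ga \in (\m \hot \g^0)(\om)$; and since the computation is a chain of equivalences, both inclusions follow at once. (Equivalently one may note $\opn{Af}(g)(\om) = \om + \opn{Af}'(g)(0)$, where $\opn{Af}'$ is the affine action of the twisted algebra $(\m \hot \g)_{\om}$ of Proposition~\ref{prop:100}, reducing the problem to the stabilizer of $0$; but the direct computation is shortest.)

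\emph{Part (2).} Since $\d_{\om} \circ \d_{\om} = 0$, we have $\a^{\mrm{r}}_{\om} = \opn{Im}(\d_{\om}) \subseteq \opn{Ker}(\d_{\om}) = (\m \hot \g^0)(\om)$. The ideal property again uses the derivation rule: for $\al$ with $\d_{\om}(\al) = 0$ and any $\be \in \m \hot \g^{-1}$ one computes $[\al, \d_{\om}(\be)] = \d_{\om}([\al,\be]) \in \opn{Im}(\d_{\om}) = \a^{\mrm{r}}_{\om}$, so $\a^{\mrm{r}}_{\om}$ is a Lie ideal of $(\m \hot \g^0)(\om)$, in particular a Lie subalgebra of $\m \hot \g^0$. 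It is moreover $\m$-adically closed, its associated graded being the image of $\d$ inside each $(\m^{j}/\m^{j+1}) \ot \g^0$, which is a $\K$-subspace; hence by Baker--Campbell--Hausdorff $N^{\mrm{r}}_{\om} = \exp(\a^{\mrm{r}}_{\om})$ is a subgroup of $\opn{G}(\g,R)$, lying in $\opn{G}(\g,R)(\om,\om)$ by part~(1). For normality I take $h \in \opn{G}(\g,R)(\om,\om)$, so $\log(h) \in (\m \hot \g^0)(\om)$; as $\a^{\mrm{r}}_{\om}$ is an ideal there, $\opn{ad}(\log h)$ preserves $\a^{\mrm{r}}_{\om}$, hence so does $\opn{Ad}(h) = \exp(\opn{ad}(\log h))$. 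Using $\opn{Ad}(h)(\exp(a)) = \exp(\opn{Ad}(h)(a))$ and $\opn{Ad}(h)(k) = h \circ k \circ h^{-1}$ from (\ref{eqn:10}), this says exactly that $h N^{\mrm{r}}_{\om} h^{-1} = N^{\mrm{r}}_{\om}$.

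\emph{Part (3).} This falls out of Lemma~\ref{lem:3} (equivalently Proposition~\ref{prop:100}(2)). That lemma at $i = -1$ gives $\opn{Ad}(g) \circ \d_{\om} = \d_{\om'} \circ \opn{Ad}(g)$ as maps $\m \hot \g^{-1} \to \m \hot \g^{0}$; since $\opn{Ad}(g)$ is bijective on $\m \hot \g^{-1}$, applying it to images yields $\opn{Ad}(g)(\a^{\mrm{r}}_{\om}) = \opn{Ad}(g)(\opn{Im}(\d_{\om})) = \opn{Im}(\d_{\om'}) = \a^{\mrm{r}}_{\om'}$. As $\opn{Ad}(g)$ is a Lie algebra automorphism it commutes with $\exp$, whence $\opn{Ad}(g)(N^{\mrm{r}}_{\om}) = \exp(\opn{Ad}(g)(\a^{\mrm{r}}_{\om})) = \exp(\a^{\mrm{r}}_{\om'}) = N^{\mrm{r}}_{\om'}$, as required.
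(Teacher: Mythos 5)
Your parts (1) and (3) are sound. Part (1) is a correct, self-contained replacement for the paper's citation: the paper simply quotes from the proof of \cite[Theorem 2.2]{Ge} that $\d_{\om}(\ga)=0$ iff $\opn{Af}(g)(\om)=\om$, whereas you re-derive it via invertibility of $P = \smfrac{1-\exp(\opn{ad}(\ga))}{\opn{ad}(\ga)}$ (leading coefficient $-1$, higher terms topologically nilpotent) — that computation is right. Part (3) is exactly the paper's argument. The genuine gap is in part (2), at precisely the point the paper flags as the crux: your claim that $\a^{\mrm{r}}_{\om} = \opn{Im}\bigl(\d_{\om} : \m \hot \g^{-1} \to \m \hot \g^{0}\bigr)$ is $\m$-adically closed is false in general, and your justification does not establish it. The image of a continuous $R$-linear map between complete modules need not be closed when the modules are not finitely generated (closedness amounts to the quotient filtration on the image being equivalent to the subspace filtration, which fails e.g.\ for maps of the shape $e_n \mapsto \hbar^n f_n$ over $\K[[\hbar]]$); and the observation that the associated graded pieces are ``$\K$-subspaces'' is vacuous, since every submodule has that property. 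The paper says explicitly: ``since this is not a closed ideal in general, it is not immediate that the subset $\exp(\a^{\mrm{r}}_{\om})$ is a normal subgroup.'' Without closedness, your CBH argument only shows that $F(\ga_1, \ga_2)$ — an infinite sum whose individual terms lie in $\a^{\mrm{r}}_{\om}$ — lies in the \emph{closure} of $\a^{\mrm{r}}_{\om}$, not in $\a^{\mrm{r}}_{\om}$ itself; so the subgroup property of $N^{\mrm{r}}_{\om}$ is not proved. The same convergence defect afflicts your normality argument: each term of $\opn{Ad}(h)(a) = \sum_{i} \smfrac{1}{i!} \opn{ad}(\log h)^i(a)$ lies in the ideal, but the limit is a priori only in its closure.

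The missing idea is the paper's lifting trick. One puts on $\m \hot \g^{-1}$ the auxiliary (generally non-Lie) bracket $[\al_1, \al_2]_{\om} := [\d_{\om}(\al_1), \al_2]$, which satisfies $\d_{\om}([\al_1, \al_2]_{\om}) = [\d_{\om}(\al_1), \d_{\om}(\al_2)]$ because $\d_{\om}$ is a square-zero derivation. Given $\ga_i = \d_{\om}(\al_i)$ with $\al_i \in \m \hot \g^{-1}$, one evaluates each CBH Lie polynomial $F_j$ at $\al_1, \al_2$ using this bracket, obtaining $F_{j,\om}(\al_1,\al_2)$ with $\d_{\om}(F_{j,\om}(\al_1,\al_2)) = F_j(\ga_1,\ga_2)$. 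The series $\al := \sum_{j} F_{j,\om}(\al_1,\al_2)$ converges in the \emph{complete} module $\m \hot \g^{-1}$, and applying the continuous map $\d_{\om}$ gives $F(\ga_1,\ga_2) = \d_{\om}(\al) \in \opn{Im}(\d_{\om})$ exactly — this is what makes $N^{\mrm{r}}_{\om}$ a subgroup. For normality, your own part (3) argument already supplies the correct fix: specializing the intertwining relation of Lemma \ref{lem:3} to $g = h \in \opn{G}(\g,R)(\om,\om)$ and $\om' = \om$ yields $\opn{Ad}(h)(\a^{\mrm{r}}_{\om}) = \a^{\mrm{r}}_{\om}$ directly from bijectivity of $\opn{Ad}(h)$ on $\m \hot \g^{-1}$, with no series taking values in the ideal; this, rather than exponentiating $\opn{ad}(\log h)$, is the paper's route.
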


\begin{proof}
(1) Since $\d_{\om}$ is a graded derivation of the  graded Lie
algebra $\m \hatotimes \g$, its kernel is a graded Lie subalgebra, and its image
is a graded Lie ideal in the kernel. In degree $0$ we get a Lie subalgebra 
$(\m \hatotimes \g^0)(\om)$, and a Lie ideal $\a^{\mrm{r}}_{\om}$ in it.

Because $\d_{\om}$ is a continuous homomorphism between complete $R$-modules,
its kernel $(\m \hatotimes \g^0)(\om)$ is closed; so this is a closed Lie
subalgebra of $\m \hatotimes \g^0$. This implies that the subset 
$\exp \bigl( (\m \hatotimes \g^0)(\om) \bigr)$ is a closed subgroup of 
$\opn{G}(\g, R)$. 
Moreover, let $\ga \in \m \hot \g^0$ and $g := \exp(\ga)$.  
In the proof of  \cite[Theorem 2.2]{Ge} it is shown that 
$\d_{\om}(\gamma) = 0$ iff 
$\opn{Af}(g)(\om) = \om$. This shows that  
\[ \exp \bigl( (\m \hatotimes \g^0)(\om) \bigr) = 
\opn{G}(\g, R)(\om, \om) . \]

\medskip \noindent
(2) We already know that $\a^{\mrm{r}}_{\om}$ is a Lie ideal of 
$(\m \hatotimes \g^0)(\om)$; but since this is not a closed ideal in general,
it is not immediate that the subset $\exp(\a^{\mrm{r}}_{\om})$ is a normal
subgroup of 
$\exp \bigl( (\m \hatotimes \g^0)(\om) \bigr)$.

Consider the CBH series
\[ F(x_1, x_2) = \sum_{j \geq 1}\, F_j(x_1, x_2) , \]
where $F_j(x_1, x_2)$ are homogeneous elements of degree $i$ in the 
free Lie algebra in the variables $x_1, x_2$ over $\Q$. 
It is known (cf.\ \cite{Bo}) that 
\begin{equation} \label{eqn:53}
\exp(\ga_1) \cdot \exp(\ga_2)  = \exp(F(\ga_1, \ga_2))
\end{equation}
for $\ga_1, \ga_2 \in \m \hot \g^0$. 

Let us define a bracket $[-,-]_{\om}$ on $\m \hot \g^{-1}$ as follows:
\[ [\al_1, \al_2]_{\om} := [ \d_{\om}(\al_1), \al_2] . \]
In general this is not a Lie bracket (the Jacobi identity may fail). However,
since $\d_{\om}$ is a square zero derivation of
the graded Lie algebra $\m \hot \g$, we have 
\[ \d_{\om}([\al_1, \al_2]_{\om}) = [\d_{\om}(\al_1), \d_{\om}(\al_2)] . \]

For any $j \geq 1$ and $\al_1, \al_2 \in \m \hot \g^{-1}$  consider 
the element 
$F_{j, \om}(\al_1, \al_2) \in \m \hot \g^{-1}$
gotten by evaluating the Lie polynomial $F_j(x_1, x_2)$
at $x_i \mapsto \al_i$, using the bracket $[-,-]_{\om}$. 
Now take any $\ga_1, \ga_2 \in \a^{\mrm{r}}_{\om}$, and choose 
$\al_1, \al_2 \in \m \hot \g^{-1}$ such that $\ga_i = \d_{\om}(\al_i)$. Then 
\[ \d_{\om}(F_{j, \om}(\al_1, \al_2)) = 
F_j(\ga_1, \ga_2) \in \m \hot \g^0 . \]
Let 
\[ \al :=  \bosum_{j \geq 1}\,  F_{j, \om}(\al_1, \al_2) \in 
\m \hot \g^{-1}  \]
and $\ga := \d_{\om} (\al) \in \a^{\mrm{r}}_{\om}$. 
By continuity we get
$F(\ga_1, \ga_2) = \ga$. 
{}From this and formula (\ref{eqn:53}) we see that 
$N^{\mrm{r}}_{\om} = \exp(\a^{\mrm{r}}_{\om})$ is a
subgroup of $\opn{G}(\g, R) = \exp (\m \hatotimes \g^0)$.

Similarly, Lemma \ref{lem:3}(2) implies that the subset
$\exp(\a^{\mrm{r}}_{\om})$ is
invariant under the operations $\opn{Ad}(g)$, 
$g \in \opn{G}(\g, R)(\om, \om)$. Therefore $N^{\mrm{r}}_{\om}$ is a
normal subgroup of $\opn{G}(\g, R)(\om, \om)$.

(3) According to Lemma \ref{lem:3}(2) we know that 
$\opn{Ad}(g)(\a^{\mrm{r}}_{\om}) = \a^{\mrm{r}}_{\om'}$.
\end{proof}

Note that the set 
$\opn{G}(\g, R)(\om, \om')$ has a left action by the group 
$N^{\mrm{r}}_{\om'}$, and a right action by the group
$N^{\mrm{r}}_{\om}$. Define
\begin{equation} \label{eqn:16}
\opn{G}^{\mrm{r}}(\g, R)(\om, \om') := 
\opn{G}(\g, R)(\om, \om') / N^{\mrm{r}}_{\om} \, ,
\end{equation}
the quotient set. So there is a surjective function 
\begin{equation} \label{eqn:51}
\eta_1 : \opn{G}(\g, R)(\om, \om') \to \opn{G}^{\mrm{r}}(\g, R)(\om, \om') . 
\end{equation}

By Lemma \ref{lem:4}(3), the multiplication map of 
$\opn{G}(\g, R)$ induces maps
\begin{equation} \label{eqn:68}
\opn{G}^{\mrm{r}}(\g, R)(\om, \om') \times 
\opn{G}^{\mrm{r}}(\g, R)(\om', \om'') \to
\opn{G}^{\mrm{r}}(\g, R)(\om, \om'')
\end{equation}
for any $\om, \om', \om'' \in \opn{MC}(\g, R)$.
If $\om' = \om$ then 
$\opn{G}^{\mrm{r}}(\g, R)(\om, \om)$ is a group.

\begin{dfn}
The {\em reduced Deligne groupoid} associated to $\g$ and $(R, \m)$
is the \lb groupoid $\mbf{Del}^{\mrm{r}}(\g, R)$ defined as follows. 
The set of objects of this groupoid is $\opn{MC}(\g, R)$. For any 
$\om, \om' \in \opn{MC}(\g, R)$, the set of morphisms $\om \to \om'$ is 
the set $\opn{G}^{\mrm{r}}(\g, R)(\om, \om')$
from formula (\ref{eqn:16}). The composition in $\mbf{Del}^{\mrm{r}}(\g, R)$
is given by formula (\ref{eqn:68}), and the identity morphisms are those of the
groups $\opn{G}^{\mrm{r}}(\g, R)(\om, \om)$.
\end{dfn}

There is a morphism of groupoids (i.e.\ a functor)
\begin{equation} \label{eqn:13}
\bsym{\eta} = (\eta_0, \eta_1) : \mbf{Del}(\phi, R) \to
\mbf{Del}^{\mrm{r}}(\phi, R) ,
\end{equation}
where $\eta_0$ is the identity on the set of objects
$\opn{MC}(\g, R)$, and $\eta_1$ is the surjective function in formula
(\ref{eqn:51}).  Hence 
\begin{equation} \label{eqn:5}
\pi_0 \bigl( \mbf{Del}^{\mrm{r}}(\g, R) \bigr) =
\pi_0 \bigl( \mbf{Del}(\g, R) \bigr) =
\ol{\opn{MC}}(\m \hatotimes \g)  ,
\end{equation}
where $\pi_0(-)$ denotes the set of isomorphism classes of objects of a
groupoid.

Given a homomorphism $f : (R, \m) \to (S, \n)$ of parameter algebras, and a
homomorphism 
$\phi : \g \to \h$ of DG Lie algebras, there is an induced 
DG Lie algebra homomorphism 
\[ f \otimes \phi : \m \hatotimes \g \to \n \hatotimes \h . \]
Hence there are induced morphisms of groupoids 
$\mbf{Del}(\phi, f)$ and $\mbf{Del}^{\mrm{r}}(\phi, f)$
such that the diagram 
\[ \UseTips \xymatrix @C=11ex @R=6ex {
\mbf{Del}(\g, R)
\ar[r]^{\mbf{Del}(\phi, f)}
\ar[d]_{\bsym{\eta}}
&
\mbf{Del}(\h, S)
\ar[d]^{\bsym{\eta}}
\\
\mbf{Del}^{\mrm{r}}(\g, R)
\ar[r]^{\mbf{Del}^{\mrm{r}}(\phi, f)}
&
\mbf{Del}^{\mrm{r}}(\h, S) 
} \]
is commutative. And there is an induced function 
\[ \pi_0 \bigl( \mbf{Del}^{\mrm{r}}(\phi, f) \bigr) :
\pi_0 \bigl( \mbf{Del}^{\mrm{r}}(\g, R) \bigr) \to 
\pi_0 \bigl( \mbf{Del}^{\mrm{r}}(\h, S) \bigr) . \]
Under the equality of sets (\ref{eqn:5}), and the corresponding one for 
$\h$ and $S$, we have equality of functions
\begin{equation} \label{eqn:18}
\pi_0 \bigl( \mbf{Del}^{\mrm{r}}(\phi, f) \bigr) =
\pi_0 \bigl( \mbf{Del}(\phi, f) \bigr) =
\ol{\opn{MC}}(f \otimes \phi) . 
\end{equation}

\begin{prop} \label{prop:2}
Let $\om \in \opn{MC}(\g, R)$. 
The bijection 
\[ \exp : \m \hot \g^0 \to  \opn{G}(\g, R) \] 
induces a bijection \tup{(}of sets\tup{)}
\[ \exp : \mrm{H}^0( (\m \hot \g)_{\om} ) \to 
\opn{G}^{\mrm{r}}(\g, R)(\om, \om) . \]
This bijection is functorial w.r.t.\ homomorphisms $(R, \m) \to (R, \n)$ of
parameter algebras and homomorphisms $\g \to \h$ of DG Lie algebras.
\end{prop}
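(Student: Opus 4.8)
The plan is to make both sides explicit and then check that the set-theoretic bijection $\exp$ of Lemma \ref{lem:4} descends to the relevant quotients. By Proposition \ref{prop:100}(1) the differential of $(\m \hot \g)_{\om}$ is $\d_{\om}$, so comparing with (\ref{eqn:14}) and (\ref{eqn:15}) gives $\mrm{H}^0\bigl( (\m \hot \g)_{\om} \bigr) = (\m \hot \g^0)(\om) / \a^{\mrm{r}}_{\om}$. On the other side, Lemma \ref{lem:4}(1),(2) together with (\ref{eqn:16}) give $\opn{G}^{\mrm{r}}(\g, R)(\om, \om) = \opn{G}(\g, R)(\om, \om) / N^{\mrm{r}}_{\om}$, and they say that $\exp$ restricts to a bijection $(\m \hot \g^0)(\om) \iso \opn{G}(\g, R)(\om, \om)$ carrying the subset $\a^{\mrm{r}}_{\om}$ onto $N^{\mrm{r}}_{\om}$. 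Hence the proposition is equivalent to the claim that, for $\ga, \ga' \in (\m \hot \g^0)(\om)$,
\[ \exp(\ga)^{-1} \exp(\ga') \in N^{\mrm{r}}_{\om} \iff \ga - \ga' \in \a^{\mrm{r}}_{\om} . \]
Using the CBH formula (\ref{eqn:53}), this in turn reduces to a single assertion: for each fixed $\ga \in (\m \hot \g^0)(\om)$ the map
\[ \Lambda_{\ga} : \a^{\mrm{r}}_{\om} \to \m \hot \g^0 , \qquad \Lambda_{\ga}(\al) := F(\ga, \al) - \ga = \al + \bosum_{j \geq 2} F_j(\ga, \al) , \]
takes $\a^{\mrm{r}}_{\om}$ into itself and maps it bijectively onto $\a^{\mrm{r}}_{\om}$. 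Indeed, ``$\Lambda_{\ga}$ into $\a^{\mrm{r}}_{\om}$'' gives the implication ``$\Rightarrow$'' (injectivity of the descended map), while surjectivity of $\Lambda_{\ga}$ gives ``$\Leftarrow$'' (its well-definedness); surjectivity of the descended map is automatic.

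The one real difficulty is that $\a^{\mrm{r}}_{\om} = \opn{Im}(\d_{\om})$ is not $\m$-adically closed (as already observed in the proof of Lemma \ref{lem:4}(2)), so one cannot conclude that the convergent series $\bosum_{j \geq 2} F_j(\ga, \al)$ lies in $\a^{\mrm{r}}_{\om}$ merely because its partial sums do. I would overcome this exactly as in Lemma \ref{lem:4}(2), by lifting one degree down to $\m \hot \g^{-1}$. Fix $\ga$ with $\d_{\om}(\ga) = 0$. Every nonzero Lie monomial occurring in $F_j$ for $j \geq 2$ contains each of the two variables $x_1, x_2$ at least once, since any bracket monomial of length $\geq 2$ in a single variable vanishes. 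For such a monomial I substitute $x_1 \mapsto \ga$, substitute all but one occurrence of $x_2$ by $\al = \d_{\om}(\beta)$, and replace the remaining $x_2$ by $\beta \in \m \hot \g^{-1}$; summing over monomials (with the evident Koszul signs) produces an element $\til{F}_j(\ga, \beta) \in \m \hot \g^{-1}$ of $\m$-adic order $\geq j$. Because $\d_{\om}$ is a graded derivation with $\d_{\om}(\ga) = 0$ and $\d_{\om}^2 = 0$, applying $\d_{\om}$ differentiates only the lifted slot, so that
\[ \d_{\om} \bigl( \til{F}_j(\ga, \beta) \bigr) = F_j(\ga, \d_{\om}(\beta)) . \]

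Granting this, I would finish as follows. For the implication into $\a^{\mrm{r}}_{\om}$, write $\al = \d_{\om}(\beta) \in \a^{\mrm{r}}_{\om}$; then by continuity of $\d_{\om}$ and the displayed identity, $\Lambda_{\ga}(\al) = \d_{\om}\bigl( \beta + \bosum_{j \geq 2} \til{F}_j(\ga, \beta) \bigr)$, where the series converges in the complete module $\m \hot \g^{-1}$ because its terms have $\m$-adic order $\geq j$; hence $\Lambda_{\ga}(\al) \in \a^{\mrm{r}}_{\om}$. For surjectivity I would solve $\Lambda_{\ga}(\al) = \al_0$, for a given $\al_0 = \d_{\om}(\beta_0) \in \a^{\mrm{r}}_{\om}$, at the lifted level, i.e.\ find $\beta$ with $\beta + \bosum_{j \geq 2} \til{F}_j(\ga, \beta) = \beta_0$. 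Since each $\til{F}_j(\ga, -)$ carries the factor $\ga$ of positive $\m$-adic order, the operator $\beta \mapsto \beta_0 - \bosum_{j \geq 2} \til{F}_j(\ga, \beta)$ raises $\m$-adic order on differences, hence is an $\m$-adic contraction on $\m \hot \g^{-1}$ and has a unique fixed point $\beta$; then $\al := \d_{\om}(\beta) \in \a^{\mrm{r}}_{\om}$ satisfies $\Lambda_{\ga}(\al) = \al_0$. This proves the claim, and with it the asserted bijection. The main obstacle throughout is precisely the non-closedness of $\a^{\mrm{r}}_{\om}$, which forces both the membership and the solvability arguments to be run in $\m \hot \g^{-1}$ rather than directly in $\m \hot \g^0$.

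Finally, functoriality is routine: a homomorphism $f : (R, \m) \to (S, \n)$ of parameter algebras and a DG Lie algebra homomorphism $\phi : \g \to \h$ induce $f \ot \phi : \m \hot \g \to \n \hot \h$, which commutes with $\exp$ and with the CBH series $F$, sends $\om$ to an MC element and intertwines $\d_{\om}$ with the corresponding twisted differential, and is therefore compatible with the identifications of this proof and with the maps $\Lambda_{\ga}$. Hence the square relating the two instances of the bijection commutes.
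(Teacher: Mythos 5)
Your proof is correct, and it is best read as a full expansion of the paper's own proof, which consists of a single line: the proposition is declared to be \emph{an immediate consequence of Lemma \ref{lem:4}(1,2) and formula \textup{(\ref{eqn:16})}}. What the paper takes for granted is exactly the coset correspondence you isolate -- that for $\ga, \ga' \in (\m \hot \g^0)(\om)$ one has $\exp(\ga)^{-1} \exp(\ga') \in N^{\mrm{r}}_{\om}$ if and only if $\ga - \ga' \in \a^{\mrm{r}}_{\om}$ -- and you are right that this is not purely formal, precisely because $\a^{\mrm{r}}_{\om} = \opn{Im}(\d_{\om})$ need not be $\m$-adically closed. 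Your proof of the inclusion $\Lambda_{\ga}(\a^{\mrm{r}}_{\om}) \subseteq \a^{\mrm{r}}_{\om}$ reuses the device from the paper's proof of Lemma \ref{lem:4}(2): lift one $x_2$-slot of each surviving CBH monomial to $\m \hot \g^{-1}$, observe that $\d_{\om}(\ga) = 0$ and $\d_{\om}^2 = 0$ force the derivation to hit only the lifted slot, sum the lifts in the complete module $\m \hot \g^{-1}$, and only then apply $\d_{\om}$; this is sound, including the order estimate $\til{F}_j(\ga, \beta) \in \m^j \hot \g^{-1}$. The surjectivity of $\Lambda_{\ga}$, which you obtain by solving the lifted equation $\beta + \bosum_{j \geq 2} \til{F}_j(\ga, \beta) = \beta_0$ via an $\m$-adic contraction (each $\til{F}_j(\ga, -)$ raises order on differences because every surviving monomial contains a factor $\ga$ of positive order, and $\m \hot \g^{-1}$ is complete), is a genuine addition: no such argument is written anywhere in the paper, which subsumes it under ``immediate'', yet it is needed for well-definedness of the descended map. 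So your route has the same skeleton as the paper's -- Lemma \ref{lem:4}(1,2), formula (\ref{eqn:16}), and CBH with the degree $-1$ lifting trick -- but makes explicit, and correctly proves, the two steps the paper's one-line proof leaves hidden.
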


\begin{proof}
This is an immediate consequence of Lemma \ref{lem:4}(1,2) and formula 
(\ref{eqn:16}).
\end{proof}

\begin{rem}
After writing an earlier version of this paper, we were told by M. Manetti that
M. Kontsevich had mentioned the idea of a reduced Deligne groupoid already in
1994. See \cite[page 19]{Ko1} and \cite{Mt}.
\end{rem}

\section{DG Lie Quasi-isomorphisms -- Nilpotent Algebras}
\label{sec:dg-quasi-nilp}

In this section we prove several lemmas that will be used in Section 
\ref{sec:dg-quasi-comp}. We assume that 
$(R, \m)$ is an artinian parameter algebra, but $\m \neq 0$. Also we have a DG
Lie algebra quasi-isomorphism $\phi : \g \to \h$. 

Let 
\[ l(R) := \min\, \{ l \in \N \mid \m^{l+1} = 0 \} , \]
and define $\n := \m^{l(R)}$. Thus $\n$ is an ideal in $R$ satisfying 
$\m \n = 0$. Let 
$\bar{R} := R / \n$ and $\bar{\m} := \m / \n$. 
So $(\bar{R}, \bar{\m})$ is a parameter algebra, and there is a 
canonical surjection $p : R \to \bar{R}$.

Our assumption that $\m \neq 0$ implies that $l(R) \geq 1$, and that
$l(\bar{R}) < l(R)$.

The homomorphism $p : R \to \bar{R}$ induces a surjective homomorphism of DG Lie
algebras 
\[ p : \m \otimes \g \to \bar{\m} \otimes \g , \]
and likewise for $\h$. Thus we get a commutative diagram of morphisms of \lb
groupoids 
\[ \UseTips \xymatrix @C=9ex @R=6ex {
\mbf{Del}^{\mrm{r}}(\g, R)
\ar[r]^{\phi}
\ar[d]_{p}
&
\mbf{Del}^{\mrm{r}}(\h, R)
\ar[d]^{p}
\\
\mbf{Del}^{\mrm{r}}(\g, \bar{R})
\ar[r]^{\phi}
&
\mbf{Del}^{\mrm{r}}(\h, \bar{R}) \ ,
} \]
where, for the sake of brevity, we write $p$ instead of 
$\mbf{Del}^{\mrm{r}}(\g, p)$, etc. 

Given elements $\om, \om' \in \opn{MC}(\g, R)$, let
$\bar{\om} := p(\om)$ and $\bar{\om}' := p(\om')$ in 
$\opn{MC}(\g, \bar{R})$. For any element 
$\bar{g} \in \opn{G}(\g, \bar{R})(\bar{\om}, \bar{\om}')$
we define 
\begin{equation} \label{eqn:4}
\opn{G}(\g, R)(\om, \om') / \bar{g} :=  
\{ g \in \opn{G}(\g, R)(\om, \om') \mid p(g) = \bar{g} \} .
\end{equation}

Next, given an element $\bar{\om} \in \opn{MC}(\g, \bar{R})$, 
let us denote by $\mbf{Del}(\g, R) / \bar{\om}$ 
the fiber over $\bar{\om}$ of the morphism of groupoids 
\[ p : \mbf{Del}(\g, R) \to \mbf{Del}(\g, \bar{R}) . \]
Thus the set of objects of  $\mbf{Del}(\g, R) / \bar{\om}$ is the set 
\begin{equation}
\opn{MC}(\g, R) / \bar{\om} := \{ \om \in \opn{MC}(\g, R) \mid 
p(\om) = \bar{\om} \} .
\end{equation}
The set of morphisms $\om \to \om'$ in $\mbf{Del}(\g, R) / \bar{\om}$ 
is 
\begin{equation}
\opn{G}(\g, R)(\om, \om') / 1 :=  
\{ g \in \opn{G}(\g, R)(\om, \om') \mid p(g) = 1 \} . 
\end{equation}

We shall need some of this construction also for the groupoid 
$\mbf{Del}^{\mrm{r}}(\g, R)$.
Given elements $\om, \om' \in \opn{MC}(\g; R)$, let
$\bar{\om} := p(\om)$ and $\bar{\om}' := p(\om')$ in 
$\opn{MC}(\g, \bar{R})$.
Suppose 
$\bar{g} \in \opn{G}^{\mrm{r}}(\g, \bar{R})(\bar{\om}, \bar{\om}')$.
We define the subset 
\begin{equation}
\opn{G}^{\mrm{r}}(\g, R)(\om, \om') / \bar{g} := 
\{ g \in \opn{G}^{\mrm{r}}(\g, R)(\om, \om') \mid p(g) = \bar{g} \} . 
\end{equation}

We now recall the obstruction functions $o_2$ and $o_1$ introduced in
\cite[Section 2.6]{GM}. Let us denote by 
$\mrm{Z}^i(\g)$ the $\K$-module of $i$-cocycles in $\g$. 
For $\al \in \m \otimes \mrm{Z}^i(\g)$ we shall denote its cohomology class by 
$[\al] \in \m \otimes \mrm{H}^i(\g) \cong \mrm{H}^i(\m \ot \g)$.

Let 
\[ \opn{cur} : \m \otimes \g^1 \to \m \otimes \g^2 \]
be the function 
\begin{equation} \label{eqn:36}
\opn{cur}(\om) := \d(\omega) + \smfrac{1}{2} [\omega, \omega] .
\end{equation}
(``$\opn{cur}$'' stands for ``curvature''.)
Thus $\om$ is an MC element iff $\opn{cur}(\om) = 0$.
Given $\bar{\om} \in \opn{MC}(\g, \bar{R})$, 
choose any lift to an element 
$\om \in \m \otimes \g^1$.
Then 
$\opn{cur}(\om) \in \n \otimes \mrm{Z}^2(\g)$,
and we define 
\begin{equation} \label{eqn:41}
o_2(\bar{\om}) := [\opn{cur}(\om)] \in \n \otimes \mrm{H}^2(\g) .
\end{equation}
It is shown in \cite{GM} that $o_2(\bar{\om})$
is independent of the choice, and the resulting obstruction function
\[ o_2 : \opn{MC}(\g, \bar{R}) \to 
\n \otimes \mrm{H}^2 (\g) \]
has the property that an element 
$\bar{\om} \in \opn{MC}(\g, \bar{R})$ 
lifts to an element of $\opn{MC}(\g, R)$
iff $o_2(\bar{\om}) = 0$.

Consider an element $\bar{\om} \in \opn{MC}(\g, \bar{R})$.
The set 
$\opn{MC}(\g, R) / \bar{\om}$, if it is nonempty, 
has a simply transitive action by the additive group 
$\n \otimes \mrm{Z}^1(\g)$, namely 
$\om \mapsto \om + \beta$ for $\beta \in \n \otimes \mrm{Z}^1(\g)$.
Given $\om, \om' \in \opn{MC}(\g, R) / \bar{\om}$, define 
\begin{equation} \label{eqn:42}
o_1(\om, \om') := [\om - \om'] \in \n \otimes \mrm{H}^1 (\g) .
\end{equation}
The obstruction function
\[ o_1 : \opn{MC}(\g, R) / \bar{\om} \ \times \ 
\opn{MC}(\g, R) / \bar{\om} \to 
\n \otimes \mrm{H}^1 (\g)  \]
has the property that the set 
$\opn{G}(\g, R)(\om, \om') / 1$ 
is nonempty  iff 
$o_1(\om, \om') = 0$. 

The obstruction functions $o_2$ and $o_1$ are functorial in $\g$ (in the
obvious sense).

\begin{rem}
It is possible to define the obstruction $o_0$ here too, but we will not use it.
Consider the exact sequence of complexes 
\[ 0 \to \n \otimes \g \to (\m \otimes \g)_{\om} \xar{ \ p \ }
(\bar{\m} \otimes \g)_{\bar{\om}} \to 0 . \]
{}From the cohomology  exact sequence we get a homomorphism
\[ \mrm{H}^{-1} ((\bar{\m} \otimes \g)_{\bar{\om}}) \to
\n \otimes \mrm{H}^{0}(\g) . \]
For 
$g, g' \in \opn{G}^{\mrm{r}}(\g, R)(\om, \om') / \bar{g}$,
the obstruction class $o_0(g, g')$ lives in the cokernel of this homomorphism.
\end{rem}

\begin{lem} \label{lem:1}
Let 
$\bar{\chi} \in \opn{MC}(\h, \bar{R})$,
$\bar{\om} \in \opn{MC}(\g, \bar{R})$,
$\chi \in \opn{MC}(\h, R) / \bar{\chi}$ 
and
\[ \bar{h} \in \opn{G}(\h, \bar{R})(\phi(\bar{\om}), \bar{\chi} ) . \]
Then there exist 
\[ \om \in \opn{MC}(\g, R) / \bar{\om} \]
and 
\[ h \in \opn{G}(\h, R)(\phi(\om), \chi) / \bar{h} . \]
\end{lem}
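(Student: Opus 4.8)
The plan is to prove the lemma by the Goldman--Millson obstruction method, reducing everything to two facts: that $\bar{\om}$ lifts to $\opn{MC}(\g, R)$ (governed by $o_2$), and that two lifts of $\phi(\bar{\om})$ can be connected by a gauge transformation reducing to $1$ (governed by $o_1$). Throughout I use that $R$ is artinian, so $\m \hot \g = \m \otimes \g$, and that since $\K$ is a field the functor $\n \otimes (-)$ is exact; hence the quasi-isomorphism $\phi$ induces isomorphisms $\phi_* : \n \otimes \mrm{H}^i(\g) \to \n \otimes \mrm{H}^i(\h)$ in every degree.

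First I would lift the given gauge element. Since $\exp : \m \otimes \h^0 \to \opn{G}(\h, R)$ is bijective and $p : \m \to \bar{\m}$ is surjective, the gauge group surjects under $p$, so I can choose $h_0 \in \opn{G}(\h, R)$ with $p(h_0) = \bar{h}$. Put $\psi_0 := \opn{Af}(h_0^{-1})(\chi) \in \opn{MC}(\h, R)$. Because $p(\chi) = \bar{\chi}$ and $\opn{Af}$ is functorial along $p$, we get $p(\psi_0) = \opn{Af}(\bar{h}^{-1})(\bar{\chi}) = \phi(\bar{\om})$, so $\psi_0$ is a lift of $\phi(\bar{\om})$. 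In particular $o_2(\phi(\bar{\om})) = 0$. By functoriality of $o_2$ this class equals $\phi_*(o_2(\bar{\om}))$, and since $\phi_*$ is injective on $\n \otimes \mrm{H}^2(\g)$ we conclude $o_2(\bar{\om}) = 0$. Hence there exists a lift $\om_0 \in \opn{MC}(\g, R) / \bar{\om}$.

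Next I would correct $\om_0$ so that $\phi(\om_0)$ becomes connectable to $\psi_0$. Both $\phi(\om_0)$ and $\psi_0$ lie in the fiber $\opn{MC}(\h, R) / \phi(\bar{\om})$, so $o_1(\phi(\om_0), \psi_0) \in \n \otimes \mrm{H}^1(\h)$ is defined. Recall that $\opn{MC}(\g, R) / \bar{\om}$ is a torsor under $\n \otimes \mrm{Z}^1(\g)$ via $\om \mapsto \om + \be$, and that $\phi$ is linear and sends cocycles to cocycles; hence for $\be \in \n \otimes \mrm{Z}^1(\g)$ we have $\phi(\om_0 + \be) = \phi(\om_0) + \phi(\be)$ with $\phi(\be) \in \n \otimes \mrm{Z}^1(\h)$. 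Using $o_1(-,-) = [\,(-) - (-)\,]$ and additivity of cohomology classes,
\[ o_1(\phi(\om_0 + \be), \psi_0) = o_1(\phi(\om_0), \psi_0) + \phi_*[\be] . \]
Since $\phi_*$ is surjective on $\n \otimes \mrm{H}^1$, I choose a class $[\be]$ with $\phi_*[\be] = - o_1(\phi(\om_0), \psi_0)$, pick a representing cocycle $\be$, and set $\om := \om_0 + \be \in \opn{MC}(\g, R) / \bar{\om}$. Then $o_1(\phi(\om), \psi_0) = 0$, so $\opn{G}(\h, R)(\phi(\om), \psi_0) / 1$ is nonempty; choose $h_1$ in it, so $p(h_1) = 1$ and $\opn{Af}(h_1)(\phi(\om)) = \psi_0$.

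Finally I would assemble the answer. Set $h := h_0 \circ h_1 \in \opn{G}(\h, R)$. Then $p(h) = \bar{h} \circ 1 = \bar{h}$, and
\[ \opn{Af}(h)(\phi(\om)) = \opn{Af}(h_0)\bigl( \opn{Af}(h_1)(\phi(\om)) \bigr) = \opn{Af}(h_0)(\psi_0) = \chi , \]
so $h \in \opn{G}(\h, R)(\phi(\om), \chi) / \bar{h}$, as required. The only genuinely substantive points are the two appeals to the quasi-isomorphism property: injectivity of $\phi_*$ on $\mrm{H}^2$ (to obtain the lift $\om_0$ at all) and surjectivity of $\phi_*$ on $\mrm{H}^1$ (to absorb the $o_1$-obstruction by a cocycle correction). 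Everything else --- lifting $\bar{h}$, functoriality of $o_1$ and $o_2$, the torsor structure, and the additivity formula above --- is formal, and I expect the main obstacle to be bookkeeping: ensuring that each adjusted element stays in the correct fiber over $\bar{R}$, so that the obstruction classes are defined and the functoriality identities apply.
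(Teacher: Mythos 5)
Your proposal is correct and follows essentially the same route as the paper's proof: transport $\chi$ back along a lift of $\bar{h}$, kill $o_2(\bar{\om})$ via injectivity of $\mrm{H}^2(\phi)$, absorb the $o_1$-obstruction by a cocycle correction using surjectivity of $\mrm{H}^1(\phi)$, and compose the two gauge elements. The only differences from the paper are notational (your $h_0, \psi_0, \om_0, \be$ versus the paper's $h', \chi', \om'', -\gamma$) and a sign convention in the cocycle correction, neither of which affects the argument.
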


\begin{proof}
The proof is very similar to the proof of ``Surjective on isomorphism \lb
classes'' in \cite[Subsection 2.11]{GM}. It is illustrated in Figure
\ref{fig:1}.

Let 
\[ \bar{\chi}' := \opn{Af}(\bar{h})^{-1}(\bar{\chi}) 
=\phi(\bar{\om}) \in \opn{MC}(\h, \bar{R}) . \]
Choose any $h' \in \opn{G}(\h, R)$ lying above 
$\bar{h}$, and let 
\[ \chi' := \opn{Af}(h')^{-1}(\chi) \in \opn{MC}(\h, R) / \bar{\chi}' . \]
Since $\chi'$ exists, the obstruction class 
$o_2(\bar{\chi}')$ is zero. Now 
$\phi(\bar{\om}) = \bar{\chi}'$, so by functoriality of the obstruction classes
we get 
\[ \mrm{H}^2 (\phi)(o_2(\bar{\om})) = o_2(\bar{\chi}') = 0 . \]
The assumption is that $\mrm{H}^2 (\phi)$ is injective; 
hence $o_2(\bar{\om}) = 0$, and we can find 
$\om'' \in \opn{MC}(\g, R)$ lying above $\bar{\om}$. 

Let $\chi'' := \phi(\om'') \in \opn{MC}(\h, R) / \bar{\chi}'$. 
Consider the pair of elements 
$\chi'', \chi' \in \opn{MC}(\h, R) / \bar{\chi}'$.
There is an obstruction class 
\[ o_1(\chi'', \chi') \in \n \otimes \mrm{H}^1 (\h) . \]
By assumption the homomorphism $\mrm{H}^1 (\phi)$ is surjective, so there is 
is a cohomology class 
$c \in \n \otimes \mrm{H}^1 (\g)$
such that 
$\mrm{H}^1 (\phi)(c) = o_1(\chi'', \chi')$. 
Let 
$\gamma \in \n \otimes \mrm{Z}^1 (\g)$
be a cocycle representing $c$, and define
\[ \om := \om'' - \gamma \in \m \otimes \g^1 . \]
Then 
$\om \in \opn{MC}(\g, R) / \bar{\om}$
(it is an easy calculation done in \cite{GM}).
Let 
\[ \chi''' := \phi(\om) \in \opn{MC}(\h, R) / \bar{\chi}' . \]
Now $o_1(\om'', \om') = c$, so
\[ o_1(\chi''', \chi') = o_1(\chi'', \chi') - o_1(\chi'', \chi''') = 
o_1(\chi'', \chi') - \mrm{H}^1 (\phi)(o_1(\om'', \om')) = 0 . \]
Therefore there exists 
$h''' \in \opn{G}(\g, R)(\chi''', \chi') / 1$. 
And we have
\[ h := h' \cdot h''' \in \opn{G}(\g, R)(\chi''', \chi) / \bar{h} . \]
\end{proof}

\begin{figure}
\includegraphics[scale=0.35]{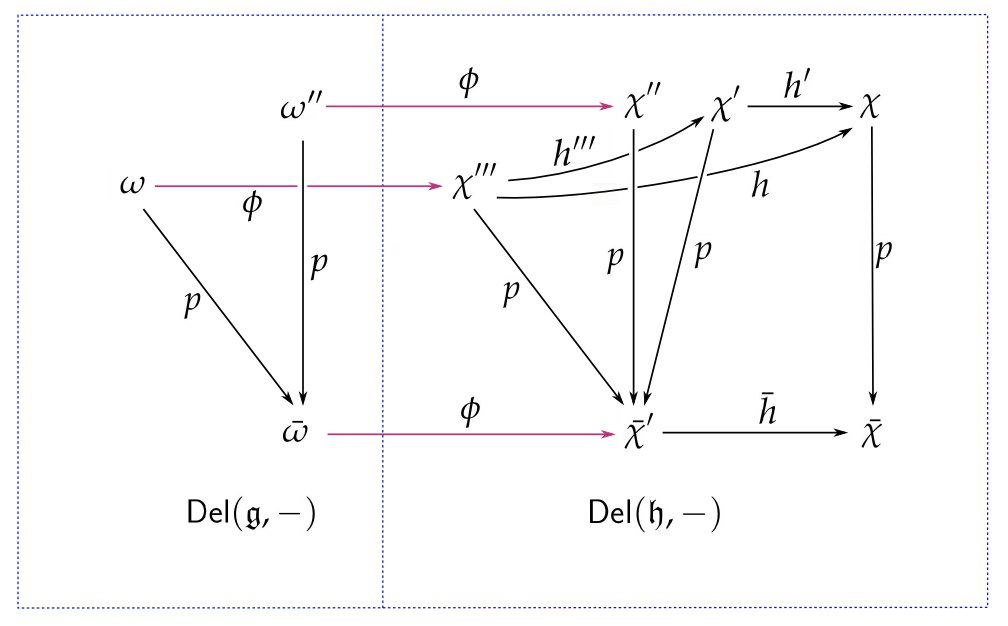}
\caption{Illustration for the proof of Lemma \ref{lem:1}.
The diagram is commutative.} 
\label{fig:1}
\end{figure}

\begin{lem} \label{lem:5}
Let $\om \in \opn{MC}(\g, R)$ and 
$\chi := \phi(\om) \in \opn{MC}(\h, R)$.
\begin{enumerate}
\item The homomorphism of DG Lie algebras 
\[ \phi : (\m \otimes \g)_{\om} \to (\m \otimes \h)_{\chi} \]
is a quasi-isomorphism.

\item The group homomorphism 
\[ \phi : \opn{G}^{\mrm{r}}(\g, R)(\om, \om) \to 
\opn{G}^{\mrm{r}}(\h, R)(\chi, \chi) \]
is an isomorphism.
\end{enumerate}
\end{lem}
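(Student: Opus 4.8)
The proof naturally splits, and the real work is in part (1): once that is established, part (2) follows formally from Proposition \ref{prop:2}. So I will concentrate on part (1).

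For part (1), the plan is to compare $\phi$ on the two twisted complexes by filtering along powers of $\m$ and inducting on the nilpotency length $l(R)$. The key observation is that since $\om \in \m \otimes \g^1$, the operator $\opn{ad}(\om)$ strictly raises the $\m$-adic filtration: it carries $\m^k \otimes \g$ into $\m^{k+1} \otimes \g$. Consequently the twisted differential $\d_{\om} = \d + \opn{ad}(\om)$ agrees with the untwisted differential $\d$ on each graded piece. Concretely, I would use the short exact sequence of complexes
\[ 0 \to \n \otimes \g \to (\m \otimes \g)_{\om} \xrightarrow{\ p\ } (\bar{\m} \otimes \g)_{\bar{\om}} \to 0 , \]
considered in the remark earlier in this section, where $\n = \m^{l(R)}$ and $\m \n = 0$. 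On the subcomplex $\n \otimes \g$ the operator $\opn{ad}(\om)$ vanishes (because $\m \n = 0$), so its differential is simply $1 \otimes \d$; since $\n$ is a finite-dimensional $\K$-vector space one has $\mrm{H}^i(\n \otimes \g) = \n \otimes \mrm{H}^i(\g)$, and therefore $\phi$ restricts to a quasi-isomorphism $\n \otimes \g \to \n \otimes \h$.

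Then I would run the induction: the induced map $\phi$ on the quotient $(\bar{\m} \otimes \g)_{\bar{\om}} \to (\bar{\m} \otimes \h)_{\bar{\chi}}$ is a quasi-isomorphism by the inductive hypothesis, since $l(\bar{R}) < l(R)$, the base case $\bar{\m} = 0$ (i.e.\ $l(R) = 1$) being immediate. Comparing the two long exact cohomology sequences attached to the displayed short exact sequence and to its image under $\phi$, and applying the five lemma, forces the middle map $\phi : (\m \otimes \g)_{\om} \to (\m \otimes \h)_{\chi}$ to be a quasi-isomorphism. I expect the only delicate points to be the verification that $\opn{ad}(\om)$ vanishes on $\n \otimes \g$ and raises the filtration in general (both immediate from $\om \in \m \otimes \g^1$ and $\m \n = 0$), and checking that the short exact sequence is compatible with $\phi$, which is clear from functoriality.

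For part (2), I would simply invoke Proposition \ref{prop:2}, which furnishes functorial bijections
\[ \exp : \mrm{H}^0\bigl( (\m \otimes \g)_{\om} \bigr) \to \opn{G}^{\mrm{r}}(\g, R)(\om, \om) \]
and
\[ \exp : \mrm{H}^0\bigl( (\m \otimes \h)_{\chi} \bigr) \to \opn{G}^{\mrm{r}}(\h, R)(\chi, \chi) \]
that fit into a commutative square whose vertical arrows are $\mrm{H}^0(\phi)$ on the left and $\phi$ on the right. By part (1) the map $\mrm{H}^0(\phi)$ is bijective, and since the two horizontal arrows are bijective, the right-hand vertical arrow $\phi$ is bijective as well. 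As it is a group homomorphism, it is an isomorphism of groups, as claimed. Thus the only genuine obstacle is the filtration-and-five-lemma argument of part (1).
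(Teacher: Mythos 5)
Your proposal is correct and follows essentially the same route as the paper: part (1) is proved by induction on $l(R)$ via the short exact sequence $0 \to \n \otimes \g \to (\m \otimes \g)_{\om} \to (\bar{\m} \otimes \g)_{\bar{\om}} \to 0$, using that $\m \n = 0$ forces $\d_{\om}$ to restrict to the untwisted differential on $\n \otimes \g$, with the five lemma closing the induction, and part (2) is deduced from Proposition \ref{prop:2} exactly as in the paper. No gaps.
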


\begin{proof}
(1) This is done by induction on $l(R)$. If $l(R) = 1$ then 
$\m^2 = 0$, so $\d_{\om} = \d$ and $\d_{\chi} = \d$. 
Since $\phi : \g \to \h$ is a 
quasi-isomorphisms, and since $\m$ is flat over $\K$, the assertion is true.

Now assume that $l(R) \geq 2$. Since $\m \n = 0$ it follows that 
$\d_{\om}|_{\n \otimes \g} = \d|_{\n \otimes \g}$; 
and likewise for $\n \otimes \h$. 
Let $\bar{\om} := p(\om)$ and $\bar{\chi} := p(\chi)$. 
We get a commutative diagram of complexes of $R$-modules 
\[ \UseTips \xymatrix @C=6ex @R=6ex {
0
\ar[r]
&
\n \otimes \g
\ar[r]
\ar[d]^{\phi}
&
(\m \otimes \g)_{\om}
\ar[r]^{p}
\ar[d]^{\phi}
&
(\bar{\m} \otimes \g)_{\bar{\om}}
\ar[r]
\ar[d]^{\phi}
& 
0
\\
0
\ar[r]
&
\n \otimes \h
\ar[r]
&
(\m \otimes \h)_{\chi}
\ar[r]^{p}
&
(\bar{\m} \otimes \h)_{\bar{\chi}}
\ar[r]
&
0
} \]
with exact rows. By induction the right vertical arrow is a quasi-isomorphism;
and the left vertical arrow is a quasi-isomorphism by the same argument given
in the case $l(R) = 1$.
Therefore the middle vertical arrow is a quasi-isomorphism.

\medskip \noindent
(2) Combine item (1) above and Proposition \ref{prop:2}.
\end{proof}

\begin{lem} \label{lem:2}
Let $\om, \om' \in \opn{MC}(\g, R)$, and define 
$\bar{\om} := p(\om)$, 
$\bar{\om}' := p(\om')$, 
$\chi := \phi(\om)$,
$\chi' := \phi(\om')$, 
$\bar{\chi} := \phi(\bar{\om})$ and 
$\bar{\chi}' := \phi(\bar{\om}')$.
Let 
\[ \bar{g} \in \opn{G}^{\mrm{r}}(\g, \bar{R})(\bar{\om}, \bar{\om}') , \]
\[ \bar{h} := \phi(\bar{g}) \in \opn{G}^{\mrm{r}}(\h, \bar{R})
(\bar{\chi}, \bar{\chi}') , \]
and
\[ h \in \opn{G}^{\mrm{r}}(\h, R)(\chi, \chi')  / \bar{h} . \]
Then there exists a unique element
\[ g \in \opn{G}^{\mrm{r}}(\g, R)(\om, \om') / \bar{g} \]
such that $\phi(g) = h$.
\end{lem}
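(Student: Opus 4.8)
The plan is to treat the uniqueness and existence clauses separately, leaning on Lemma \ref{lem:5} for the group-theoretic part and on the obstruction function $o_1$ for the lifting part. Uniqueness comes for free: if $g_1, g_2 \in \opn{G}^{\mrm{r}}(\g, R)(\om, \om') / \bar{g}$ both satisfy $\phi(g_1) = \phi(g_2) = h$, then $g_2 \cdot g_1^{-1}$ lies in the reduced automorphism group $\opn{G}^{\mrm{r}}(\g, R)(\om', \om')$ and is sent by $\phi$ to the identity of $\opn{G}^{\mrm{r}}(\h, R)(\chi', \chi')$. By Lemma \ref{lem:5}(2) the map $\phi$ on these groups is an isomorphism, in particular injective, so $g_2 \cdot g_1^{-1} = 1$ and $g_1 = g_2$.

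For existence, the first and main task is to show the fiber $\opn{G}^{\mrm{r}}(\g, R)(\om, \om') / \bar{g}$ is nonempty, i.e.\ that $\bar{g}$ lifts to some reduced morphism $\om \to \om'$ over $R$. I would attach an obstruction class to this problem. Choose a non-reduced representative $\hat{g} \in \opn{G}(\g, \bar{R})(\bar{\om}, \bar{\om}')$ of $\bar{g}$, lift it to a group element $g_1 \in \opn{G}(\g, R)$, and set $\om'' := \opn{Af}(g_1)(\om) \in \opn{MC}(\g, R)$, so that $p(\om'') = \bar{\om}'$. Then $\om''$ and $\om'$ both lie over $\bar{\om}'$, and I define $o(\bar{g}) := o_1(\om'', \om') \in \n \otimes \mrm{H}^1(\g)$. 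The delicate point is that this class depends only on $\bar{g}$: replacing the group lift $g_1$ by $\exp(\nu)$ with $\nu \in \n \otimes \g^0$ changes $\om''$ only by the coboundary $\d(\nu)$ (since $\m \n = 0$), leaving the class $[\om'' - \om']$ fixed; and replacing the representative $\hat{g}$ by $\hat{g} \cdot n$ with $n \in N^{\mrm{r}}_{\bar{\om}}$ can be absorbed by replacing $g_1$ with $g_1 \cdot \tilde{n}$, where $\tilde{n} \in N^{\mrm{r}}_{\om}$ lifts $n$ (here $p \colon N^{\mrm{r}}_{\om} \to N^{\mrm{r}}_{\bar{\om}}$ is surjective). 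Since $N^{\mrm{r}}_{\om} \subseteq \opn{G}(\g, R)(\om, \om)$ fixes $\om$ by Lemma \ref{lem:4}(1), the element $\om''$ is then literally unchanged. Tracing the equivalence between reduced and non-reduced lifts, one checks that $\bar{g}$ lifts to a reduced morphism $\om \to \om'$ if and only if $o(\bar{g}) = 0$ in the full group $\n \otimes \mrm{H}^1(\g)$.

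With the obstruction in place the rest is formal. The class $o$ is functorial, since $\phi$ carries $\om''$ to a representative $\chi'' := \phi(\om'')$ of the same construction for $\bar{h} = \phi(\bar{g})$, so functoriality of $o_1$ gives $o(\bar{h}) = \mrm{H}^1(\phi)(o(\bar{g}))$. Because $\phi \colon \g \to \h$ is a quasi-isomorphism and $\n$ is flat over $\K$, the map $\mrm{H}^1(\phi) \colon \n \otimes \mrm{H}^1(\g) \to \n \otimes \mrm{H}^1(\h)$ is an isomorphism, hence injective; and the hypothesis that $h$ exists says exactly that $\bar{h}$ lifts, i.e.\ $o(\bar{h}) = 0$. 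Injectivity then forces $o(\bar{g}) = 0$, producing a lift $g_0 \in \opn{G}^{\mrm{r}}(\g, R)(\om, \om') / \bar{g}$. Finally I would correct $g_0$ to match $h$: both $\phi(g_0)$ and $h$ reduce to $\bar{h}$, so $h \cdot \phi(g_0)^{-1} \in \opn{G}^{\mrm{r}}(\h, R)(\chi', \chi')$ reduces to the identity over $\bar{R}$; Lemma \ref{lem:5}(2) over $R$ yields a unique $u \in \opn{G}^{\mrm{r}}(\g, R)(\om', \om')$ with $\phi(u) = h \cdot \phi(g_0)^{-1}$, and the same lemma over $\bar{R}$ forces $p(u) = 1$, whence $g := u \cdot g_0$ satisfies $p(g) = \bar{g}$ and $\phi(g) = h$. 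The hard part will be the well-definedness of $o(\bar{g})$ in the reduced setting, namely verifying that passing to the quotient by $N^{\mrm{r}}$ does not enlarge the obstruction group; this is exactly what the stabilization property of $N^{\mrm{r}}_{\om}$ buys us.
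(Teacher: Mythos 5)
Your proof is correct, and it runs on the same two engines as the paper's: the obstruction $o_1$ combined with injectivity of $\mrm{H}^1(\phi)$ to show $\bar{g}$ lifts at all, and Lemma \ref{lem:5}(2) to pin the lift down. The packaging, however, is genuinely different in two places. First, the paper never constructs a representative-independent class $o(\bar{g})$: it fixes one group-element lift $g''$ of a representative of $\bar{g}$, sets $\om'' := \opn{Af}(g'')(\om)$, and observes directly that $o_1(\chi'', \chi') = 0$ because $h \cdot \phi(g'')^{-1}$ exists; so your well-definedness checks --- the change-of-lift computation using $\m \n = 0$, and the surjectivity of $p : N^{\mrm{r}}_{\om} \to N^{\mrm{r}}_{\bar{\om}}$ (which the paper does use, but only later, in the proof of Lemma \ref{lem:6}(3)) --- are sound but strictly avoidable. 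What they buy you is the clean criterion ``$\bar{g}$ lifts iff $o(\bar{g}) = 0$'', which lets you convert the hypothesis that $h$ exists into $o(\bar{h}) = 0$ without unwinding representatives ad hoc; the paper instead tolerates a mild mixing of reduced and non-reduced morphisms (acknowledged in its Figure \ref{fig:2} caption) at exactly this point. Second, where you repair the preliminary lift $g_0$ by an explicit correction $u$ with $\phi(u) = h \cdot \phi(g_0)^{-1}$, forcing $p(u) = 1$ by a second application of Lemma \ref{lem:5}(2) over $\bar{R}$ (and note the degenerate case $\bar{\m} = 0$ is trivially fine), the paper argues once that the nonempty fiber $\opn{G}^{\mrm{r}}(\g, R)(\om, \om') / \bar{g}$ is a torsor under $\opn{G}^{\mrm{r}}(\g, R)(\om, \om) / 1$, so that Lemma \ref{lem:5}(2) makes $\phi$ bijective between the $\g$- and $\h$-fibers, delivering existence and uniqueness in one stroke; unwound, that torsor argument secretly performs the same two applications of the lemma (surjectivity over $R$, injectivity over $\bar{R}$) that you make explicit. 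Your separate uniqueness step is in fact slightly stronger than the statement requires, since it shows $\phi$ is injective on all of $\opn{G}^{\mrm{r}}(\g, R)(\om, \om')$, not merely on the fiber over $\bar{g}$. Net assessment: same obstruction-theoretic route, but your decomposition is more modular and self-policing, at the cost of length, and every step checks out.
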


\begin{proof}
The proof is very similar to the proof of ``Full''
in \cite[Subsection 2.11]{GM}. (Note however that there is a mistake in loc.\
cit. In our notation, what is done there is referring to the obstruction class 
$o_1(\om, \om')$, but this is not defined since 
$p(\om) \neq p(\om')$ in general.) The proof is illustrated in Figure
\ref{fig:2}.

Choose an arbitrary lift $g'' \in \opn{G}(\g, R)$ of $\bar{g}$, 
namely $\bar{g} = p(\eta_1(g''))$.
Define 
\[ \om'' := \opn{Af}(g'')(\om) \in \opn{MC}(\g, R) , \]
\[ h'' := \phi(g'') \in \opn{G}(\h, R) , \]
\[ \chi'' := \phi(\om'') \in \opn{MC}(\h, R) \]
and
\[ h' := h \cdot (h'')^{-1} \in 
\opn{G}(\h, R)(\chi'', \chi') / 1 . \]
Since $\om'', \om' \in \opn{MC}(\g, R) / \bar{\om}'$
the obstruction class 
$o_1(\om'', \om')$
is defined, and it satisfies 
\[ \mrm{H}^1 (\phi)(o_1(\om'', \om')) = 
o_1(\chi'', \chi') = 0 \]
because $h'$ exists. By assumption the homomorphism
$\mrm{H}^1 (\phi)$ is injective, and we conclude that 
$o_1(\om'', \om') = 0$. So there exists some 
$g' \in \opn{G}(\g, R)(\om'', \om') / 1$.
Let 
\[ g''' := g' \cdot g'' \in \opn{G}(\g, R)(\om, \om') . \]
Then $p(\eta_1(g''')) = \bar{g}$, and hence
\[ \eta_1(g''') \in \opn{G}^{\mrm{r}}(\g, R)(\om, \om') / \bar{g} . \]

By Lemma \ref{lem:5}(2) we have a group isomorphism 
\[ \phi : \opn{G}^{\mrm{r}}(\g, R)(\om, \om) / 1 \to 
\opn{G}^{\mrm{r}}(\h, R)(\chi, \chi) / 1 . \]
Since the set 
$\opn{G}^{\mrm{r}}(\g, R)(\om, \om') / \bar{g}$
is nonempty (it contains $\eta_1(g''')$), it admits a simply transitive action
by the group 
$\opn{G}^{\mrm{r}}(\g, R)(\om, \om) / 1$. 
Therefore the function
\[ \phi : \opn{G}^{\mrm{r}}(\g, R)(\om, \om') / \bar{g} \to 
\opn{G}^{\mrm{r}}(\h, R)(\chi, \chi') / \bar{h} \]
is bijective. We see that there is a unique element 
$g \in \opn{G}^{\mrm{r}}(\g, R)(\om, \om') / \bar{g}$
such that 
$\phi(g) = h$.
\end{proof}

\begin{figure}
\includegraphics[scale=0.35]{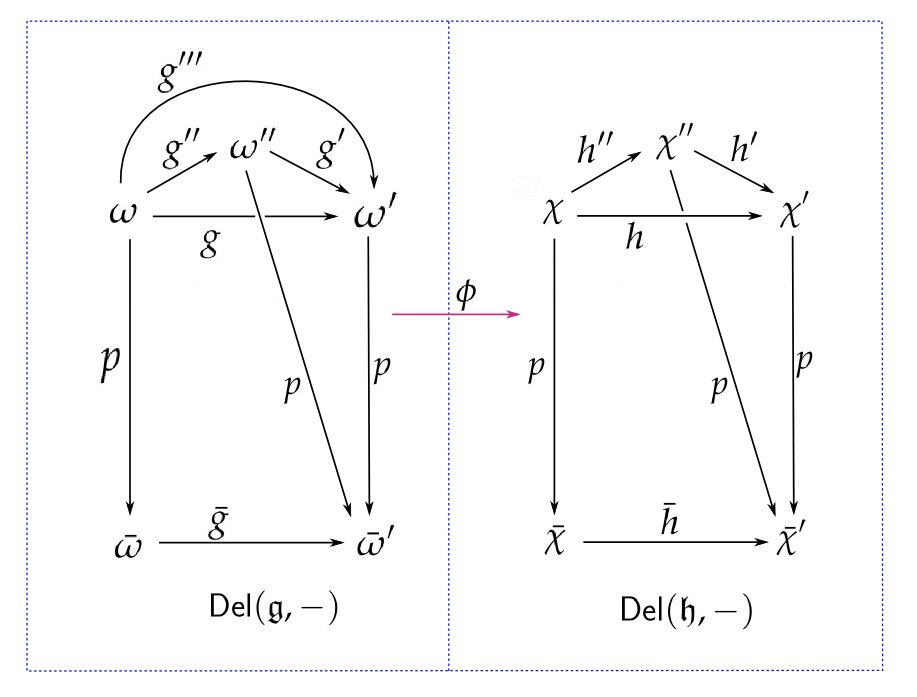}
\caption{Illustration for the proof of Lemma \ref{lem:2}.
Some of the arrows, like $g$ and $h$, belong to the groupoid
$\mbf{Del}^{\mrm{r}}(-, -)$. 
Other arrows, like $g''$ and $h''$, belong to the groupoid
$\mbf{Del}(-, -)$. 
The function $\phi$  sends $\om \mapsto \chi$,
$\bar{\om} \mapsto \bar{\chi}$, $g \mapsto h$, etc. 
The whole diagram is commutative.} 
\label{fig:2}
\end{figure}

\section{DG Lie Quasi-isomorphisms -- Pronilpotent Algebras}
\label{sec:dg-quasi-comp}

In this section we extend \cite[Theorem 2.4]{GM} (attributed to Deligne) to the
case of complete noetherian local rings and unbounded DG Lie algebras. 
This is Theorem \ref{thm:2}.

Let $(R, \m)$ be a complete parameter algebra, and let
$\phi : \g \to \h$ be a DG Lie algebra quasi-isomorphism. 
As in Section \ref{sec:facts}, for any $j \in \N$ we write 
$R_j := R / \m^{j+1}$ and $\m_j := \m / \m^{j+1}$. We denote by 
$p_j : R \to R_j$ and $p_{j, i} : R_j \to R_i$ the canonical projections 
(for $j \geq i$).
Let $\n_j := \m^{j} / \m^{j + 1}$, which is an ideal in $R_j$ satisfying \lb
$\m_j \n_j = 0$ and 
\[ \n_j = \opn{Ker}(p_{j, j-1} : R_j \to R_{j - 1}) . \]
Thus $R_{j - 1} \cong R_j / \n_j$.

\begin{lem} \label{lem:6}
\begin{enumerate}
\item Let $\om \in  \opn{MC}(\g, R)$ and
$\chi := \phi(\om) \in \opn{MC}(\h, R)$.
Then the homomorphism of DG Lie algebras
\[ \phi :  (\m \hatotimes \g)_{\om} \to 
(\m \hatotimes \h)_{\chi} \]
is a quasi-isomorphism. 

\item The canonical function 
\[ \opn{MC}(\g, R) \to \lim_{\leftarrow j}\, \opn{MC}(\g, R_j) \]
is bijective.

\item For any $\om, \om' \in \opn{MC}(\g, R)$ the canonical function 
\[ \opn{G}^{\mrm{r}}(\g, R)(\om, \om') \to \lim_{\leftarrow j}\, 
\opn{G}^{\mrm{r}} ( \g, R_j) \bigl( p_j(\om), p_j(\om') \bigr) \]
is surjective.
\end{enumerate}
\end{lem}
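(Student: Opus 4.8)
The plan is to reduce all three parts to the tower of complexes $C^{\g}_j := (\m_j \ot \g)_{\om_j}$, where $\om_j := p_j(\om)$, under the identification $\m \hatotimes \g^i = \lim_{\leftarrow j} (\m_j \ot \g^i)$; the transition maps $p_{j, j-1}$ are surjective in each degree (since $R_j \surj R_{j-1}$ and $\g^i$ is $\K$-flat) and are chain maps (as $p_{j, j-1}$ is a DG Lie algebra homomorphism carrying $\om_j$ to $\om_{j-1}$). I would dispose of (2) first, as it is purely formal: the bijection $\m \hot \g^1 \iso \lim_{\leftarrow j} (\m_j \ot \g^1)$ carries $\opn{MC}(\g, R)$ onto $\lim_{\leftarrow j} \opn{MC}(\g, R_j)$, because $\opn{cur}$ commutes with every $p_j$, so $\opn{cur}(\om) = 0$ iff $\opn{cur}(p_j(\om)) = 0$ for all $j$, by separatedness of $\m \hot \g^2$.

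For (1), the plan is to pass to the limit of the level-wise quasi-isomorphisms supplied by Lemma \ref{lem:5}(1). Since both towers $\{ C^{\g}_j \}$ and $\{ C^{\h}_j \}$ have degreewise-surjective transition maps, each carries a Milnor exact sequence $0 \to {\lim_{\leftarrow j}}^{1} \mrm{H}^{i-1}(C_j) \to \mrm{H}^i(\lim_{\leftarrow j} C_j) \to \lim_{\leftarrow j} \mrm{H}^i(C_j) \to 0$, and $\phi$ induces a map between the two. By Lemma \ref{lem:5}(1) each $\phi : C^{\g}_j \to C^{\h}_j$ is a quasi-isomorphism, so $\phi$ is an isomorphism of towers $\{ \mrm{H}^i(C^{\g}_j) \} \to \{ \mrm{H}^i(C^{\h}_j) \}$; hence it induces isomorphisms on both $\lim_{\leftarrow j}$ and ${\lim_{\leftarrow j}}^{1}$. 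The five lemma then forces $\mrm{H}^i(\phi)$ to be an isomorphism on the limits, i.e.\ on $(\m \hot \g)_{\om} \to (\m \hot \h)_{\chi}$. The one point needing care is that completion does not commute with cohomology on the nose; this is exactly where the ${\lim}^1$-term enters, and the argument only goes through because the transition maps are surjective, which is what guarantees the Milnor sequence.

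For (3), I would lift a compatible system $(\bar g_j)$, $\bar g_j \in \opn{G}^{\mrm{r}}(\g, R_j)(\om_j, \om'_j)$, to an element of $\opn{G}^{\mrm{r}}(\g, R)(\om, \om')$ by building a compatible tower of representatives in the unreduced groups. Using $\opn{G}(\g, R) = \exp(\m \hot \g^0) = \lim_{\leftarrow j} \opn{G}(\g, R_j)$, it suffices to choose representatives $g_j \in \opn{G}(\g, R_j)(\om_j, \om'_j)$ of $\bar g_j$ with $p_{j, j-1}(g_j) = g_{j-1}$. The inductive step hinges on the claim that $p_{j, j-1}$ maps $N^{\mrm{r}}_{\om_j}$ onto $N^{\mrm{r}}_{\om_{j-1}}$: since $p_{j, j-1} \colon \m_j \ot \g^{-1} \to \m_{j-1} \ot \g^{-1}$ is surjective and intertwines $\d_{\om_j}$ with $\d_{\om_{j-1}}$, we get $\a^{\mrm{r}}_{\om_j} \surj \a^{\mrm{r}}_{\om_{j-1}}$, and then $\exp$ transports this surjectivity onto the groups. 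Granting the claim, I pick any representative $g_j'$ of $\bar g_j$; since $p_{j, j-1}(g_j')$ and $g_{j-1}$ represent the same class $\bar g_{j-1}$, they differ by some $n \in N^{\mrm{r}}_{\om_{j-1}}$, which I lift to $\tilde n \in N^{\mrm{r}}_{\om_j}$ and set $g_j := g_j' \tilde n$, fixing compatibility without altering the class. The limit $g \in \opn{G}(\g, R)$ then satisfies $\opn{Af}(g)(\om) = \om'$ (verified level by level, using separatedness), and its class maps to $(\bar g_j)$, giving surjectivity. I expect this tower-lifting—establishing $N^{\mrm{r}}_{\om_j} \surj N^{\mrm{r}}_{\om_{j-1}}$ and the attendant coset bookkeeping—to be the main obstacle, whereas (2) is immediate and (1) is standard once the Milnor sequence is in hand; note also that only surjectivity can be expected in (3), as the ${\lim}^1$-term in (1) obstructs injectivity in general.
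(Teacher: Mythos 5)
Your proposal is correct and essentially matches the paper's proof: your part (2) is the same completeness/closedness observation, and your part (3) is exactly the paper's recursive lifting of representatives through the tower, where your explicit claim $N^{\mrm{r}}_{\om_j} \surj N^{\mrm{r}}_{\om_{j-1}}$ (via surjectivity of $\a^{\mrm{r}}_{\om_j} \to \a^{\mrm{r}}_{\om_{j-1}}$, since $p_{j,j-1}$ is surjective on $\m_j \ot \g^{-1}$ and intertwines the twisted differentials) is precisely what the paper uses implicitly when it says ``choose any $a \in N^{\mrm{r}}(\g, R_j)_{\om_j}$ lifting $\bar{a}$''. For part (1) the paper runs the same $\lim^1$/Mittag-Leffler mechanism in a mildly different package---it forms the mapping cone $M$ of $\phi$, notes each $M_j$ is acyclic by Lemma \ref{lem:5}(1), and deduces acyclicity of $M \cong \lim_{\leftarrow j} M_j$---whereas you apply the Milnor sequence to the two towers directly and finish with the five lemma, which is equally valid.
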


Of course items (2-3) refer also to $\h$.

\begin{proof}
(1) We forget the Lie brackets. Let $M$ be the mapping cone of the homomorphism
of complexes of $R$-modules
\[ \phi :  (\m \hatotimes \g)_{\om} \to (\m \hatotimes \h)_{\chi} . \]
So 
\[ M = (\m \hatotimes \g)_{\om}[1] \oplus (\m \hatotimes \h)_{\chi} , \]
with a suitable differential. 
For any $j \geq 0$ let $\om_j := p_j(\om)$ and $\chi_j := p_j(\chi)$.
We have an inverse system of homomorphisms of complexes
\[ \phi_j :  (\m_j \hatotimes \g)_{\om_j} \to 
(\m_j \hatotimes \h)_{\chi_j} , \]
and we denote by $M_j$ the mapping cone of $\phi_j$.  
Then each $p_j : M \to M_j$ is surjective, and 
$M \cong \lim_{\leftarrow j}\, M_j$.
According to Lemma \ref{lem:5}(1) the complexes $M_j$ are acyclic. 
Therefore, using the Mittag-Leffler argument, the complex $M$ is also acyclic.

\medskip \noindent
(2) This is because $\m \hatotimes \g^1$ is $\m$-adically complete, and 
$\opn{MC}(\g, R)$ is a closed subset in it (w.r.t.\ the $\m$-adic metric).

\medskip \noindent
(3) Write $\om'_j := p_j(\om')$. 
Suppose we are given a sequence $\{ g_j \}_{j \in \N}$ of elements
\[ g_j \in \opn{G}^{\mrm{r}} ( \g, R_j)(\om_j, \om'_j ) \]
such that $p_{j, j-1}(g_j) = g_{j-1}$.
We are going to find a sequence $\{ \til{g}_j \}_{j \in \N}$ of elements
\[ \til{g}_j \in \opn{G} ( \g, R_j)(\om_j, \om'_j ) \]
such that $p_{j, j-1}(\til{g}_j) = \til{g}_{j-1}$
and $\eta_1(\til{g}_j) = g_j$ for all $j$.
Since the group $\opn{G}(\g, R)$ is 
complete w.r.t.\ its $\m$-adic filtration, the limit
\[ \til{g} := \lim_{\leftarrow j}\, \til{g}_j \in \opn{G}(\g, R) \]
exists; and by continuity 
$\til{g} \in \opn{G}(\g, R)(\om, \om')$.
Then the element
\[ g := \eta_1(\til{g}) \in \opn{G}^{\mrm{r}}(\g, R)(\om, \om') \]
satisfies $p_j(g) = g_j$ for all $j$.

Here is the recursive construction of the sequence 
$\{ \til{g}_j \}_{j \in \N}$.
For $j = 0$ we take $\til{g}_0 := 1$. Now assume that $j \geq 1$ and we have a 
sequence $(\til{g}_0, \ldots, \til{g}_{j-1})$ as required.
Choose any element 
$\til{g}'_j \in \opn{G}(\g, R_j)(\om_j, \om'_j)$
such that $\eta_1(\til{g}'_j) = g_j$. 
Then 
\[ \eta_1(p_{j, j-1}(\til{g}'_j)) = p_{j, j-1}(\eta_1(\til{g}'_j)) = 
p_{j, j-1}(g_j) = g_{j-1} = \eta_1(\til{g}_{j-1}) . \]
There is some $\bar{a} \in N^{\mrm{r}}(\g, R_{j-1})_{\om_{j-1}}$
such that 
$\bar{a} \cdot p_{j, j-1}(\til{g}'_j) = \til{g}_{j-1}$.
Choose any $a \in N^{\mrm{r}}(\g, R_j)_{\om_j}$
lifting $\bar{a}$. Then 
$\til{g}_j := a \cdot \til{g}'_j$ will satisfy 
$p_{j, j-1}(\til{g}_j) = \til{g}_{j-1}$ and
$\eta_1(\til{g}_j) = g_j$.
\end{proof}

Here is the main result of this section. We denote the identity automorphism of
$R$ by $\bsym{1}_R$.

\begin{thm} \label{thm:2}
Let $(R, \m)$ be a parameter algebra over $\K$, and let $\phi : \g \to \h$ be a
DG Lie algebra quasi-isomorphism over $\K$. Then the function 
\[ \ol{\opn{MC}}(\bsym{1}_R \otimes \phi) : \ol{\opn{MC}}(\m \hatotimes \g) \to
\ol{\opn{MC}}(\m \hatotimes \h)  \]
is bijective. Moreover, the morphism of groupoids
\[ \mbf{Del}^{\mrm{r}}(\phi, R) : \mbf{Del}^{\mrm{r}}(\g, R) 
\to \mbf{Del}^{\mrm{r}}(\h, R)  \]
is an equivalence. 
\end{thm}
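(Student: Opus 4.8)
The plan is to prove the theorem for a general complete parameter algebra $(R, \m)$ by reducing to the artinian case handled in Section \ref{sec:dg-quasi-nilp}, and then passing to the inverse limit using Lemma \ref{lem:6}. Recall that to show $\mbf{Del}^{\mrm{r}}(\phi, R)$ is an equivalence of groupoids it suffices to verify three things: that $\phi$ is \emph{essentially surjective} (surjective on $\pi_0$), that $\phi$ is \emph{faithful}, and that $\phi$ is \emph{full}. The faithfulness and fullness together amount to the statement that for each pair $\om, \om' \in \opn{MC}(\g, R)$, the map on morphism sets $\phi : \opn{G}^{\mrm{r}}(\g, R)(\om, \om') \to \opn{G}^{\mrm{r}}(\h, R)(\phi(\om), \phi(\om'))$ is a bijection. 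The first statement of the theorem, bijectivity of $\ol{\opn{MC}}(\bsym{1}_R \ot \phi)$, is exactly the assertion that $\phi$ induces a bijection on $\pi_0$, so the two displayed claims are really the components of a single equivalence-of-groupoids statement via formula (\ref{eqn:5}).

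First I would establish the result when $R$ is artinian, by induction on the nilpotence length $l(R)$. The base case $l(R) = 0$ is trivial (then $\m = 0$ and both groupoids have a single object with trivial automorphisms); the case $\m \neq 0$ uses the surjection $p : R \to \bar{R} = R/\n$ with $\bar{R}$ of strictly smaller length, for which the theorem holds by induction. For essential surjectivity: given $\chi \in \opn{MC}(\h, R)$ with class $\bar\chi = p(\chi)$, the inductive hypothesis produces an object $\bar\om \in \opn{MC}(\g, \bar R)$ and a morphism $\bar h$ in $\mbf{Del}^{\mrm{r}}(\h, \bar R)$ from $\phi(\bar\om)$ to $\bar\chi$; lifting $\bar h$ to $\mbf{Del}(\h,\bar R)$ and applying Lemma \ref{lem:1} yields a lift $\om \in \opn{MC}(\g, R)/\bar\om$ together with a gauge transformation $h \in \opn{G}(\h, R)(\phi(\om), \chi)/\bar h$, which gives an isomorphism $\phi(\om) \cong \chi$ in the reduced groupoid. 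For fullness-plus-faithfulness on morphisms, I would run the inductive step through Lemma \ref{lem:2}: given $\om, \om'$ and a reduced morphism $h$ over a prescribed $\bar h = \phi(\bar g)$, that lemma supplies a \emph{unique} $g \in \opn{G}^{\mrm{r}}(\g, R)(\om, \om')/\bar g$ with $\phi(g) = h$, and combined with the inductive bijectivity of $\phi$ on the $\bar R$-level morphisms this gives bijectivity on $R$-level reduced morphisms. (Lemma \ref{lem:5}(2) handles the automorphism groups directly.) The hypotheses of injectivity of $\mrm{H}^2(\phi)$ and surjectivity/injectivity of $\mrm{H}^1(\phi)$ needed by Lemmas \ref{lem:1} and \ref{lem:2} are exactly what $\phi$ being a quasi-isomorphism provides.

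Next I would pass from the artinian case to a general complete $(R, \m)$ by taking the inverse limit over $R_j = R/\m^{j+1}$. Essential surjectivity of $\phi$ over $R$ follows from the artinian case together with the bijection $\opn{MC}(\g, R) \iso \lim_{\leftarrow j} \opn{MC}(\g, R_j)$ of Lemma \ref{lem:6}(2) and a Mittag-Leffler argument: given $\chi \in \opn{MC}(\h, R)$, one solves the lifting problem compatibly at each finite level and assembles a compatible system of objects $\om_j$ and morphisms, whose limit is an object $\om \in \opn{MC}(\g, R)$ isomorphic to $\chi$. For the morphism sets, faithfulness is immediate from the level-wise injectivity and the fact that $\opn{G}^{\mrm{r}}(\g, R)(\om, \om')$ injects into the inverse limit; fullness follows from the surjectivity of the canonical map onto $\lim_{\leftarrow j} \opn{G}^{\mrm{r}}(\g, R_j)(\om_j, \om'_j)$ recorded in Lemma \ref{lem:6}(3), together with levelwise fullness. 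Concretely, a reduced morphism $h \in \opn{G}^{\mrm{r}}(\h, R)(\phi(\om), \phi(\om'))$ restricts to a compatible system of morphisms over the $R_j$; by the artinian case each has a unique preimage $g_j$, these are automatically compatible by uniqueness, and Lemma \ref{lem:6}(3) lets us realize $\{g_j\}$ by an actual element $g$ of $\opn{G}^{\mrm{r}}(\g, R)(\om, \om')$ mapping to $h$.

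The main obstacle I anticipate is the passage to the limit for the morphism sets, rather than the objects. The objects and the curvature obstructions live in honestly $\m$-adically complete modules, so their inverse limits behave well; but $\opn{G}^{\mrm{r}}(\g, R)(\om, \om')$ is a \emph{quotient} of $\opn{G}(\g, R)(\om, \om')$ by the non-closed normal subgroup $N^{\mrm{r}}_{\om}$, and quotients do not commute with inverse limits in general. This is precisely why Lemma \ref{lem:6}(3) is stated and proved with the delicate recursive lifting of the $\til g_j$ through the $N^{\mrm{r}}$-cosets: one must lift the reduced elements to actual gauge transformations at each finite level compatibly, correcting at stage $j$ by an element of $N^{\mrm{r}}(\g, R_j)_{\om_j}$ lifting the discrepancy in $N^{\mrm{r}}(\g, R_{j-1})_{\om_{j-1}}$, and only then take the limit in the complete group $\opn{G}(\g, R)$. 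Granting Lemma \ref{lem:6}, the surjectivity half of fullness is clean, and the uniqueness (hence injectivity) part is forced by the artinian-level uniqueness in Lemma \ref{lem:2}, so the completion issue is isolated entirely inside the already-proved Lemma \ref{lem:6}(3).
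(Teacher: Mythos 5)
Your plan matches the paper's proof for the $\pi_0$ statements, but your treatment of the morphism sets contains a genuine gap: both your faithfulness argument and the last step of your fullness argument rest on the claim that the canonical map
\[ \opn{G}^{\mrm{r}}(\g, R)(\om, \om') \to \lim_{\leftarrow j}\,
\opn{G}^{\mrm{r}}(\g, R_j)\bigl( p_j(\om), p_j(\om') \bigr) \]
is \emph{injective}. Lemma \ref{lem:6}(3) asserts only surjectivity, and injectivity is exactly what fails here: $\opn{G}^{\mrm{r}}$ is a quotient by $N^{\mrm{r}}_{\om} = \exp \bigl( \opn{Im}(\d_{\om} : \m \hot \g^{-1} \to \m \hot \g^0) \bigr)$, and since $\opn{Im}(\d_{\om})$ need not be closed in the $\m$-adic topology, an element whose image lies in $N^{\mrm{r}}_{\om_j}$ at every finite level need only lie in the exponential of the \emph{closure} of $\opn{Im}(\d_{\om})$, not in $N^{\mrm{r}}_{\om}$ itself. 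This is precisely the closed-orbit pitfall the paper warns about in Remark \ref{rem:11}'s neighborhood (see Remark \ref{rem:1}, where a lemma of \cite{Ye1} asserting a limit statement of this type is retracted for exactly this reason). Consequently, in your fullness step you may produce $g$ with $p_j(\phi(g)) = p_j(h)$ for all $j$, but you cannot conclude $\phi(g) = h$; and your faithfulness claim is unsupported.

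The paper's proof is structured to sidestep this. It proves: (a) surjectivity on $\pi_0$, working in the unreduced groupoid $\mbf{Del}(-,-)$ via Lemma \ref{lem:1} and completeness of $\opn{MC}$ and of the gauge group; (b) injectivity on $\pi_0$, via Lemma \ref{lem:2} and the \emph{surjectivity} in Lemma \ref{lem:6}(3) --- noting explicitly that it does not matter whether $\phi(g) = h$, since only the existence of some morphism $\om \to \om'$ is needed; and (c) bijectivity of $\phi$ on the reduced automorphism groups, proved \emph{without any inverse limit of reduced morphism sets}: Lemma \ref{lem:6}(1) shows by a Mittag-Leffler mapping-cone argument that $\phi : (\m \hot \g)_{\om} \to (\m \hot \h)_{\chi}$ is a quasi-isomorphism over the complete ring, and Proposition \ref{prop:2} identifies $\opn{G}^{\mrm{r}}(\g, R)(\om, \om)$ with $\exp$ of $\mrm{H}^0((\m \hot \g)_{\om})$, so bijectivity on automorphism groups is just bijectivity of $\mrm{H}^0(\phi)$. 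Bijectivity on an arbitrary morphism set $\opn{G}^{\mrm{r}}(\g, R)(\om, \om')$ then follows formally: it is either empty (compatibly on both sides, by (a) and (b)) or a torsor under the automorphism group, and $\phi$ is equivariant. To repair your proposal, replace your limit argument for morphisms by this route: the homological computation of the reduced automorphism group over the complete ring is the essential idea your proposal is missing.
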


Observe that there are no finiteness nor boundedness conditions on $\g$ and
$\h$. 

\begin{proof}
We will prove these assertions:
\begin{enumerate}
\rmitem{a} The function $\ol{\opn{MC}}(\bsym{1}_R \otimes \phi)$
is surjective.

\rmitem{b}  The function $\ol{\opn{MC}}(\bsym{1}_R \otimes \phi)$
is injective.

\rmitem{c} For any 
$\om \in \opn{MC}(\g, R)$ the group homomorphism 
\[ \opn{G}^{\mrm{r}}(\phi, R) : \opn{G}^{\mrm{r}}(\g, R)(\om, \om) \to 
\opn{G}^{\mrm{r}}(\h, R) \bigl( \phi(\om), \phi(\om) \bigr) \]
is bijective. 
\end{enumerate}

Assertions (a-b) say that the function
$\ol{\opn{MC}}(\bsym{1}_R \otimes \phi)$
is bijective. Then assertion (c) implies that the function 
\[ \phi :  \opn{G}^{\mrm{r}}(\g, R)(\om, \om') \to 
\opn{G}^{\mrm{r}}(\h, R)(\phi(\om), \phi(\om')) \]
is bijective for every $\om, \om' \in \opn{MC}(\g, R)$.
Hence $\mbf{Del}^{\mrm{r}}(\phi, R)$ is an equivalence.

\medskip \noindent
Proof of (a). Here it is more convenient to work with the groupoids
$\mbf{Del}(-, -)$. 
Suppose we are given $\chi \in \opn{MC}(\h, R)$. We will find 
elements $\om \in \opn{MC}(\g, R)$ and 
$h \in \opn{G}(\h, R)(\phi(\om), \chi)$. 

Define $\chi_j := p_j(\chi) \in \opn{MC}(\h, R_j)$.
We are going to find a sequence $\{ \om_j \}_{j \in \N}$ of elements
$\om_j \in \opn{MC}(\g, R_j)$, and a sequence $\{ h_j \}_{j \in \N}$ of
elements
\[ h_j \in  \opn{G}(\h, R_j)(\phi(\om_j), \chi_j) , \]
such that $p_{j, j-1}(\om_j) = \om_{j-1}$ and 
$p_{j, j-1}(h_j) = h_{j-1}$ for all $j$.
This is done by induction on $j$.
For $j = 0$ we take $\om_0 := 0$ and $h_0 := 1 = \exp(0)$. 
Now consider $j \geq 1$. Assume that we have found sequences 
$(\om_0, \ldots, \om_{j-1})$ and 
$(h_0, \ldots, h_{j-1})$ satisfying the required conditions. 
By Lemma \ref{lem:1}, applied to the artinian local ring $R_j$,
and the elements $\chi_j, \om_{j-1}$ and $h_{j-1}$, 
there exist elements $\om_j$ and $h_j$ as required. 

Now the $R$-module $\m \hatotimes \g^1$ is $\m$-adically complete, 
and the set $\opn{MC}(\g, R)$ is closed inside 
$\m \hatotimes \g^1$ (with respect to the $\m$-adic metric). 
Hence the limit 
$\om := \lim_{\leftarrow j}\, \om_j$
belongs to $\opn{MC}(\g, R)$. 
The gauge group $\opn{G}(\h, R)$ is complete, since it is pronilpotent 
(with respect to its $\m$-adic filtration).
We get an element 
\[ h := \lim_{\leftarrow j}\, h_j \in \opn{G}(\h, R) . \]
By continuity we see that 
$\opn{Af}(h)(\phi(\om)) = \chi$.

\medskip \noindent
Proof of (b). Here we prefer to work with the groupoids
$\mbf{Del}^{\mrm{r}}(-,-)$.
Take $\om, \om' \in \opn{MC}(\g, R)$, and define 
$\chi := \phi(\om)$ and $\chi' := \phi(\om')$. 
Suppose we are given 
$h \in \opn{G}^{\mrm{r}}(\h, R)(\chi, \chi')$;
we will find 
$g \in \opn{G}^{\mrm{r}}(\g, R)(\om, \om')$. 
(We do not care whether $\phi(g) = h$ or not.)

Define 
$\om_j := p_j(\om)$,
$\om'_j := p_j(\om')$,
$\chi_j := p_j(\chi)$,
$\chi'_j := p_j(\chi')$
and 
$h_j := p_j(h)$. 
We will find a sequence $\{ g_j \}_{j \in \N}$ of elements 
$g_j \in  \opn{G}^{\mrm{r}}(\g, R_j)(\om_j, \om'_j)$
such that 
$\phi(g_j) = h_j$
and 
$p_{j, j-1}(g_j) = g_{j-1}$.
Then, by Lemma \ref{lem:6}(3) there is an element 
$g \in \opn{G}^{\mrm{r}}(\g, R)(\om, \om')$
such that $p_j(g) = g_j$ for every $j$. 

We construct the sequence $\{ g_j \}_{j \in \N}$ by induction on $j$. For
$j = 0$ we take $g_0 := 1$. 
Now let $j \geq 1$, and suppose that we have a sequence 
$(g_0, \ldots, g_{j-1})$ as required. According to Lemma \ref{lem:2}, applied
to the artinian local ring $R_j$, there exists an element 
$g_j \in \opn{G}^{\mrm{r}}(\g, R_j)(\om_j, \om'_j)$
such that $p_{j, j-1}(g_j) = g_{j-1}$ and $\phi(g_j) = h_j$.

\medskip \noindent
Proof of (c). This is the complete version of the proof of Lemma \ref{lem:5}(2).
By Lemma \ref{lem:6}(1) the function 
$\mrm{H}^0(\phi_{R, \om})$ is bijective. The claim now follows from Proposition 
\ref{prop:2}.
\end{proof}

\begin{rem}
Assume $\g$ is abelian (i.e.\ the Lie bracket is zero), so that 
$\m \hot \g$ is just a complex of $R$-modules, and 
\[ \ol{\opn{MC}}(\g, R) = \mrm{H}^1 (\m \hot \g) . \]
In this case the Deligne groupoid $\mbf{Del}(\g, R)$ is the truncation 
\[ \cdots \to 0 \to \m \hot \g^0 \to 
\opn{Ker} \bigl( \d : \m \hot \g^{1} \to \m \hot \g^2 \bigr)
\to 0 \to \cdots , \]
whereas as the reduced Deligne groupoid $\mbf{Del}^{\mrm{r}}(\g, R)$
is the truncation
\[ \cdots \to 0 \to 
\opn{Im} \bigl( \d : \m \hot \g^{-1} \to \m \hot \g^0 \bigr)
\to 
\opn{Ker} \bigl( \d : \m \hot \g^{1} \to \m \hot \g^2 \bigr)
\to 0 \to \cdots , \]
both concentrated in the degree range $[0, 1]$.
Let $\h$ be another abelian DG Lie algebra. 
It is clear that a quasi-isomorphism of complexes $\phi : \g \to \h$ will
induce a quasi-isomorphism 
\[ \mbf{Del}^{\mrm{r}}(\phi, R) : 
\mbf{Del}^{\mrm{r}}(\g, R) \to \mbf{Del}^{\mrm{r}}(\h, R) . \]
This is a special case of Theorem \ref{thm:2}.
\end{rem}

\begin{rem} \label{rem:11}
Presumably Theorem \ref{thm:2} can be extended to the following more general
situation: $\g$ and $\h$ are $R$-linear DG Lie algebras, such that all the
$R$-modules $\g^i$ and $\h^i$ are $\m$-adically complete, and the graded Lie
algebras $\opn{gr}_{\m} (\g)$ and $\opn{gr}_{\m} (\h)$ are abelian. 
We are given an $R$-linear DG Lie algebra homomorphism 
$\phi : \g \to \h$ such that 
\[ \opn{gr}_{\m}(\phi) : \opn{gr}_{\m} (\g) \to \opn{gr}_{\m} (\h) \]
is a quasi-isomorphism. Then 
\[ \ol{\opn{MC}}(\phi) : \ol{\opn{MC}}(\g) \to \ol{\opn{MC}}(\h) \]
is bijective. Cf.\ \cite[Theorem 2.1]{Ge} for the corresponding nilpotent case.
\end{rem}

\section{Some Facts on $2$-Groupoids}
\label{sec:2-grpd}

Let us recall that a (strict) {\em $2$-groupoid} $\bsym{G}$ is a groupoid
enriched in the monoidal category of groupoids. Another way of saying this is
that a $2$-groupoid $\bsym{G}$ is a $2$-category in which all $1$-morphisms and
$2$-morphisms are invertible. A comprehensive review of $2$-categories and
related constructions is available in \cite[Section 1]{Ye3}. 
See also \cite{Ma, Bw, Ge}.

We wish to make things as explicit as possible, to make calculations
(both in Section \ref{sec:del-2-grpd} of this paper, and in the new
version of \cite{Ye4}) easier.
A (small strict) $2$-groupoid $\bsym{G}$ is made up of the following
ingredients: there
is a set $\opn{Ob}(\bsym{G})$, whose elements are the {\em objects}
of $\bsym{G}$. For any $x, y \in \opn{Ob}(\bsym{G})$ there is a set 
$\bsym{G}(x, y)$, whose elements are called the {\em $1$-morphisms} from $x$
to $y$. Given $f \in \bsym{G}(x, y)$, we write $f : x \to y$. 
For any $f, g \in \bsym{G}(x, y)$ there is a set 
$\bsym{G}(x, y)(f, g)$, whose elements are called the {\em $2$-morphisms} from
$f$ to $g$. For $a \in \bsym{G}(x, y)(f, g)$ we write 
$a : f \twoto g$. 

There are three types of composition operations in $\bsym{G}$. There is {\em
horizontal composition of $1$-morphisms}: given $f_1 : x_0 \to x_1$ and 
$f_2 : x_1 \to x_2$, their composition is
$f_2 \circ f_1 : x_0 \to x_2$. Suppose we are also given $1$-morphisms 
$g_i : x_{i-1} \to x_i$ and $2$-morphisms $a_i : f_i \twoto g_i$. Then there is
a $2$-morphism 
\[ a_2 \circ a_1 : f_2 \circ f_1 \twoto g_2 \circ g_1 . \]
This is {\em horizontal composition of $2$-morphisms}. 
If we are also given \lb $1$-morphisms 
$h_i : x_{i-1} \to x_i$ and $2$-morphisms $b_i : g_i \twoto h_i$,
then there is the {\em vertical composition} (of $2$-morphisms) 
$b_i \ast a_i : f_i \twoto h_i$. 
There are pretty diagrams to display all of this (see \cite[Section 1]{Ye3} or
many other references). 

For every $x \in \opn{Ob}(\bsym{G})$ there is the identity $1$-morphism 
$\bsym{1}_x : x \to x$, and for every 
$f \in \bsym{G}(x, y)$ there is the identity $2$-morphism 
$\bsym{1}_f : f \twoto f$. 

Here are the conditions required for the structure $\bsym{G}$ to be a 
$2$-groupoid: 
\begin{itemize}
\item The set $\opn{Ob}(\bsym{G})$, together with the $1$-morphisms
$f : x \to y$, horizontal composition $g \circ f$, and the identity morphisms 
$\bsym{1}_x$, is a groupoid. 
We refer to this groupoid as the $1$-truncation of $\bsym{G}$. 

\item For every $x, y \in \opn{Ob}(\bsym{G})$, the set 
$\bsym{G}(x,y)$, together with the $2$-morphisms 
$a : f \twoto g$, vertical composition
$b \ast a$, and the identity morphisms $\bsym{1}_f$, is a groupoid. We refer
to it as the vertical groupoid above $(x, y)$. 

\item Horizontal composition of $2$-morphisms is associative, 
$\bsym{1}_{g \circ f} = \bsym{1}_g \circ \bsym{1}_f$ whenever $g$ and $f$ are
composable, and the $2$-morphisms $\bsym{1}_{\bsym{1}_x}$ are identities for
horizontal composition.

\item The exchange condition: given $f_i, g_i, h_i : x_{i-1} \to x_i$,
$a_i : f_i \twoto g_i$ and $b_i : g_i \twoto h_i$, one has
\[ (b_2 \ast a_2) \circ (b_1 \ast a_1) = 
(b_2 \circ b_1) \ast (a_2 \circ a_1) , \]
as $2$-morphisms $f_2 \circ f_1 \twoto h_2 \circ h_1$.
\end{itemize}

A consequence of these four conditions is that $2$-morphisms are invertible for
horizontal composition. Indeed, given $a : f \twoto g$ in 
$\bsym{G}(x, y)$,  its horizontal inverse 
$a^{- \circ} : f^{-1} \twoto g^{-1}$ is given by the formula
$a^{- \circ} = \bsym{1}_{g^{-1}} \circ a^{- \ast} \circ \bsym{1}_{f^{-1}}$,
where 
$a^{- \ast} : g \twoto f$ is the vertical inverse of $a$. 

Suppose $\bsym{H}$ is another $2$-groupoid. A (strict) {\em $2$-functor} 
$F : \bsym{G} \to \bsym{H}$ is a collection of functions 

\[ \begin{aligned}
F & : \opn{Ob}(\bsym{G}) \to \opn{Ob}(\bsym{H})
\\
F & : \bsym{G}(x_0, x_1) \to \bsym{H} \bigl( F(x_0), F(x_1) \bigr)
\\
F & : \bsym{G}(x_0, x_1)(g_0, g_1) \to 
\bsym{H} \bigl( F(x_0), F(x_1) \bigr) \bigl( F(g_0), F(g_1) \bigr) 
\end{aligned} \]
that respect the various compositions and identity morphisms. We denote by
$\cat{2-Grpd}$ the category consisting of $2$-groupoids and $2$-functors between
them.  

Consider a $2$-groupoid $\bsym{G}$. There is an equivalence relation on the set 
$\opn{Ob}(\bsym{G})$, given by existence of $1$-morphisms, i.e.\ $x \sim y$ if 
$\bsym{G}(x, y) \neq \emptyset$. We let 
$\pi_0(\bsym{G}) := \opn{Ob}(\bsym{G}) / {\sim}$. 

For objects $x, y \in \opn{Ob}(\bsym{G})$ there is an equivalence relation on
the set $\bsym{G}(x, y)$, given by existence of $2$-morphisms:
$f \sim g$ if $\bsym{G}(x, y)(f, g) \neq \emptyset$. We let 
\[ \pi_1(\bsym{G}, x, y) := \bsym{G}(x, y) / {\sim} . \]
We define $\pi_1(\bsym{G})$ to be the  groupoid with object set
$\opn{Ob}(\bsym{G})$, \lb
morphism sets $\pi_1(\bsym{G}, x, y)$, and composition induced by horizontal
composition in $\bsym{G}$. Thus $\pi_1(\bsym{G})$ is a quotient groupoid of the
$1$-truncation of $\bsym{G}$, and \lb
$\pi_0(\pi_1(\bsym{G})) = \pi_0(\bsym{G})$.
We write $\pi_1(\bsym{G}, x) := \pi_1(\bsym{G}, x, x)$, which is a group.
We also define 
\[ \pi_2(\bsym{G}, x)  := \bsym{G}(x, x)(\bsym{1}_x, \bsym{1}_x) . \]
This is an abelian group. 

The homotopy set $\pi_0(\bsym{G})$ and groups $\pi_i(\bsym{G}, x)$ are
functorial in an obvious way. 
A morphism $F : \bsym{G} \to \bsym{H}$ in 
$\cat{2-Grpd}$ is called a {\em weak equivalence} if the functions 
\[ \begin{aligned}
\pi_0(F) & : \pi_0(\bsym{G}) \to \pi_0(\bsym{H})
\\
\pi_1(F, x) & : \pi_1(\bsym{G}, x) \to \pi_1(\bsym{H}, F(x))
\\
\pi_2(F, x) & : \pi_2(\bsym{G}, x) \to \pi_2(\bsym{H}, F(x))
\end{aligned} \]
are bijective for all $x \in \opn{Ob}(\bsym{G})$. 

It will be useful to relate the concept of $2$-groupoid to the less
familiar concept of {\em crossed module over a groupoid}, recalled below.

For a groupoid $\bsym{G}$ and an object $x \in \opn{Ob}(\bsym{G})$
we denote by $\bsym{G}(x) := \bsym{G}(x, x)$, the automorphism group of $x$. 
The composition in $\bsym{G}$ is $\circ = \circ_{\bsym{G}}$.

Let $\bsym{G}$ and $\bsym{N}$ be groupoids, such that 
$\opn{Ob}(\bsym{G}) = \opn{Ob}(\bsym{N})$. An {\em action} $\Psi$ of $\bsym{G}$
on 
$\bsym{N}$ is a collection of group isomorphisms 
\[ \Psi(g) : \bsym{N}(x) \iso \bsym{N}(y) \]
for all $x, y \in \opn{Ob}(\bsym{G})$ and $g \in \bsym{G}(x, y)$, 
such that 
\[ \Psi(h \circ g) = \Psi(h) \circ \Psi(g) \]
whenever $g$ and $h$ are composable, and 
$\Psi(\bsym{1}_x) = \bsym{1}_{\bsym{N}(x)}$.

\begin{exa}
Let $\bsym{G}$ be any groupoid. The adjoint action $\opn{Ad}_{\bsym{G}}$ of
$\bsym{G}$ on itself is defined by
\[ \opn{Ad}_{\bsym{G}}(g)(h) := g \circ h \circ g^{-1} \]
for $g \in \bsym{G}(x, y)$ and $h \in \bsym{G}(x)$.
\end{exa}

A {\em crossed module over a groupoid}, or a {\em crossed groupoid} 
for short, is data \lb
$(\bsym{G}, \bsym{N}, \Psi, \opn{D})$ consisting of:
\begin{itemize}
\item Groupoids $\bsym{G}$ and $\bsym{N}$, such that
$\bsym{N}$ is totally disconnected, and  
$\opn{Ob}(\bsym{N}) = \opn{Ob}(\bsym{G})$.

\item An action $\Psi$ of $\bsym{G}$ on $\bsym{N}$, called the {\em twisting}.

\item A morphism of groupoids (i.e.\ a functor)
$\opn{D} : \bsym{N} \to \bsym{G}$, called the {\em feedback}, which is the 
identity on objects.
\end{itemize}

These are the conditions:
\begin{enumerate}
\rmitem{i} The morphism $\opn{D}$ is $\bsym{G}$-equivariant with respect to
the actions $\Psi$ and $\opn{Ad}_{\bsym{G}}$. Namely 
\[ \opn{D}(\Psi(g)(a)) = \opn{Ad}_{\bsym{G}}(g)(\opn{D}(a)) \]
in the group $\bsym{G}(y)$, for any $x, y \in \opn{Ob}(\bsym{G})$, 
$g \in \bsym{G}(x, y)$ and $a \in \bsym{N}(x)$.

\rmitem{ii} For any $x \in \opn{Ob}(\bsym{G})$ and 
$a \in \bsym{N}(x)$ there is equality
\[ \Psi(\opn{D}(a)) = \opn{Ad}_{\bsym{N}(x)}(a) , \]
as automorphisms of the group $\bsym{N}(x)$.
\end{enumerate}

\begin{exa}
If $\bsym{G}$ and $\bsym{N}$ are groups, namely 
$\opn{Ob}(\bsym{G}) = \opn{Ob}(\bsym{N}) = \{ 0 \}$, then 
a crossed groupoid is just a crossed module.
\end{exa}

\begin{exa} \label{exa:100}
Let $G$ be a group acting on a set $X$. For $x \in X$ let $G(x)$ denote the
stabilizer of $x$. Let $\{ N_x \}_{x \in X}$ be a 
collection of groups. Assume that for every $g \in G$ and $x \in X$ there is
given a group isomorphism $\Psi(g) : N_x \iso N_{g(x)}$, and these satisfy the
functoriality conditions of an action. 
Also assume there are
group homomorphisms $\opn{D}_x : N_x \to G(x)$ such that 
\[ \opn{Ad}_{G}(g) \circ \opn{D}_x = \opn{D}_{g(x)} \circ \, \Psi(g) \]
for any $g \in G$ and $x \in X$. 

Define $\bsym{G}$ to be the transformation groupoid associated to the
action of $G$ on $X$. And define $\bsym{N}$ to be the totally
disconnected groupoid with $\opn{Ob}(\bsym{N}) := X$ and 
$\bsym{N}(x) := N_x$. Then 
$( \bsym{G}, \bsym{N}, \Psi, \opn{D})$ is a crossed
groupoid, which we call the {\em transformation crossed groupoid} associated to
the action of $G$ on $\{ N_x \}_{x \in X}$.
\end{exa}

\begin{exa}
Suppose $\bsym{G}$ is any groupoid, and $\bsym{N}$ is a {\em normal subgroupoid}
of $\bsym{G}$, in the sense of \cite{Bw, Ye4}. Let 
$\opn{D} : \bsym{N}\to \bsym{G}$ be the inclusion, and let 
$\Psi$ be the restriction of $\opn{Ad}_{G}$ to $\bsym{N}$. Then 
$(\bsym{G}, \bsym{N}, \Psi, \opn{D})$ is a crossed groupoid.
\end{exa}

It is known that crossed groupoids are the same as $2$-groupoids
(cf.\ \cite{Bw}). We will now give a precise statement of this fact. 

\begin{prop} \label{prop:102}
Let $(\bsym{H}, \bsym{N}, \Psi, \opn{D})$ 
be a crossed groupoid. Then there is a unique $2$-groupoid 
$\bsym{G}$ with these properties:
\begin{enumerate}
\rmitem{i} The $1$-truncation of $\bsym{G}$ is the same groupoid as
$\bsym{H}$. Namely $\opn{Ob}(\bsym{G}) = \opn{Ob}(\bsym{H})$,
$\bsym{G}(x, y) = \bsym{H}(x, y)$, the identity morphisms $\bsym{1}_x$ are the
same, and the horizontal composition is 
$g \circ_{\bsym{G}} f = g \circ_{\bsym{H}} f$
for $f : x \to y$ and $g : y \to z$.

\rmitem{ii} For any $f, g : x \to y$ in $\bsym{G}$ we have
\[ \bsym{G}(x, y)(f, g) = 
\{ a \in \bsym{N}(x) \mid g = f \circ_{\bsym{H}} \opn{D}(a) \} . \]
The identity morphism $\bsym{1}_f \in \bsym{G}(x, y)(f, f)$ is 
$\bsym{1}_x \in \bsym{N}(x)$.
Given $h : x \to y$, $a : f \twoto g$ and $b : g \twoto h$,
the vertical composition is 
$b \ast_{\bsym{G}} a = a \circ_{\bsym{N}} b$.

\rmitem{iii} For any $x_0, x_1, x_2 \in \opn{Ob}(\bsym{G})$, any
$f_i, g_i : x_{i-1} \to x_i$ and any
$a_i : f_i \twoto g_i$ in $\bsym{G}$, 
the horizontal composition $a_2 \circ_{\bsym{G}} a_1$ satisfies
\[ a_2 \circ_{\bsym{G}} a_1 =  
\Psi(f_1^{-1})(a_2) \circ_{\bsym{N}} a_1 . \]
\end{enumerate}

Moreover, any $2$-groupoid $\bsym{G}$ arises this way. 
\end{prop}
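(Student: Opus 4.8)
The plan is to treat the two assertions separately: first build the $2$-groupoid $\bsym{G}$ from the crossed groupoid and check it is the unique one satisfying (i)--(iii), and then recover a crossed groupoid from an arbitrary $2$-groupoid. Uniqueness is immediate, since (i) fixes the objects, $1$-morphisms, their horizontal composition and identities; (ii) fixes the $2$-morphism sets, identity $2$-morphisms and vertical composition; and (iii) fixes horizontal composition of $2$-morphisms. So the content of the first assertion is that this prescription obeys the $2$-groupoid axioms. First I would check the vertical groupoid above each $(x,y)$: if $a : f \twoto g$ and $b : g \twoto h$, so that $g = f \circ_{\bsym{H}} \opn{D}(a)$ and $h = g \circ_{\bsym{H}} \opn{D}(b)$, then functoriality of $\opn{D}$ gives $h = f \circ_{\bsym{H}} \opn{D}(a \circ_{\bsym{N}} b)$, whence $a \circ_{\bsym{N}} b : f \twoto h$; associativity, identities and vertical inverses then come from the group structure of $\bsym{N}(x)$. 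Next I would confirm that the horizontal composite of (iii) runs from $f_2 \circ f_1$ to $g_2 \circ g_1$: expanding $\opn{D}\bigl( \Psi(f_1^{-1})(a_2) \circ_{\bsym{N}} a_1 \bigr)$ by functoriality of $\opn{D}$ and condition (i) of a crossed groupoid rewrites $\opn{D}(\Psi(f_1^{-1})(a_2))$ as $f_1^{-1} \circ \opn{D}(a_2) \circ f_1$, and a short cancellation yields $g_2 \circ g_1$. Associativity of horizontal composition and compatibility with identities follow from $\Psi$ being an action and from $\Psi(\bsym{1}_x) = \bsym{1}_{\bsym{N}(x)}$.

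The crux of the first assertion is the exchange law. Writing out both sides of $(b_2 \ast a_2) \circ (b_1 \ast a_1) = (b_2 \circ b_1) \ast (a_2 \circ a_1)$ via (ii) and (iii) and cancelling the common outer factors, I expect the identity to reduce to $\Psi(f_1^{-1})(b_2) = \opn{Ad}_{\bsym{N}(x_0)}(a_1)\bigl( \Psi(g_1^{-1})(b_2) \bigr)$. By condition (ii) of a crossed groupoid the right-hand side equals $\Psi\bigl( \opn{D}(a_1) \circ g_1^{-1} \bigr)(b_2)$, and since $g_1 = f_1 \circ \opn{D}(a_1)$ one has $\opn{D}(a_1) \circ g_1^{-1} = f_1^{-1}$, so the two sides coincide. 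This is the step that genuinely uses both crossed-groupoid axioms, and I expect it to be the main obstacle. Invertibility of $2$-morphisms for horizontal composition is then automatic from the remark preceding the proposition.

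For the converse, given a $2$-groupoid $\bsym{G}$ I would let $\bsym{H}$ be its $1$-truncation, take $\bsym{N}(x)$ to be the set of $2$-morphisms $\bsym{1}_x \twoto g$ with $g \in \bsym{H}(x)$ --- a group under horizontal composition, since $\bsym{1}_x \circ \bsym{1}_x = \bsym{1}_x$ --- let $\opn{D}$ send such a $2$-morphism to its target, and define the twisting by horizontal conjugation $\Psi(g)(a) := \bsym{1}_g \circ a \circ \bsym{1}_{g^{-1}}$. I would verify that $(\bsym{H}, \bsym{N}, \Psi, \opn{D})$ is a crossed groupoid: condition (i) is immediate from how targets behave under horizontal composition, while condition (ii) again reduces to the exchange law. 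Finally I would check that applying the construction of the first assertion to this data returns $\bsym{G}$, using the bijection sending $c : f \twoto g$ to $\bsym{1}_{f^{-1}} \circ c \in \bsym{N}(x)$, with inverse $a \mapsto \bsym{1}_f \circ a$; the compatibility of this bijection with both compositions is routine once the exchange law is in hand.
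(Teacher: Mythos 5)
Your proposal is correct and follows essentially the same route as the paper: the paper constructs the converse crossed groupoid from a $2$-groupoid with exactly your data ($\bsym{N}(x)$ the $2$-morphisms out of $\bsym{1}_x$ under horizontal composition, $\opn{D}$ the target map, $\Psi(g)(a) = \bsym{1}_g \circ a \circ \bsym{1}_{g^{-1}}$) and dismisses the forward direction as an easy verification. You have simply filled in the verifications the paper omits, and your key computation -- reducing the exchange law to $\Psi(f_1^{-1})(b_2) = \opn{Ad}_{\bsym{N}(x_0)}(a_1)\bigl(\Psi(g_1^{-1})(b_2)\bigr)$ and resolving it via crossed-groupoid axiom (ii) together with $\opn{D}(a_1) \circ g_1^{-1} = f_1^{-1}$ -- is exactly right, as is your explicit reconstruction bijection $c \mapsto \bsym{1}_{f^{-1}} \circ c$, which makes precise the paper's unproved claim that every $2$-groupoid arises this way.
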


\begin{proof}
It is easy to verify that the conditions of a $2$-groupoid hold. 

Conversely, suppose $\bsym{G}$ is any $2$-groupoid. Define the groupoid
$\bsym{G}_1$ to be the $1$-truncation of $\bsym{G}$. 

For any $x \in \opn{Ob}(\bsym{G})$ consider the set of $2$-morphisms
\begin{equation} \label{eqn:105}
\bsym{G}_2(x) := \coprod_{g \in \bsym{G}(x, x)} 
\bsym{G}(x, x)(\bsym{1}_x, g) .
\end{equation}
This is a group under horizontal composition $\circ_{\bsym{G}}$
of $2$-morphisms, with identity element 
$\bsym{1}_{\bsym{1}_x}$. There is a group homomorphism 
$\opn{D} : \bsym{G}_2(x) \to \bsym{G}_1(x)$, defined by 
\begin{equation} \label{eqn:104}
\opn{D}(a : \bsym{1}_x \twoto g) := g  .
\end{equation}
Let $\bsym{G}_2$ be the totally disconnected groupoid with set of objects 
$\opn{Ob}(\bsym{G})$, and automorphism groups $\bsym{G}_2(x)$ as defined above.
Then $\opn{D} : \bsym{G}_2 \to \bsym{G}_1$
is a morphism of groupoids.

A $1$-morphism $f : x \to y$ in $\bsym{G}$ induces a group isomorphism 
\[ \opn{Ad}_{\bsym{G}_1 \curvearrowright \bsym{G}_{2}}(f) : 
\bsym{G}_2(x) \to \bsym{G}_2(y) , \]
with formula
\begin{equation} \label{eqn:106}
\opn{Ad}_{\bsym{G}_1 \curvearrowright \bsym{G}_{2}}(f)(a) := 
\bsym{1}_f \circ a \circ \bsym{1}_{f^{-1}} 
\end{equation}
for $a \in \bsym{G}_2(x)$.
It is a simple verification that
$(\bsym{G}_1, \bsym{G}_{2}, 
\opn{Ad}_{\bsym{G}_1 \curvearrowright \bsym{G}_{2}}, \opn{D})$
is a crossed groupoid.
\end{proof}

\begin{rem}
An amusing consequence of the proof above is that the vertical
composition in a $2$-groupoid can be recovered from the horizontal
compositions.
\end{rem}

In view of Proposition \ref{prop:102}, in a $2$-groupoid $\bsym{G}$ 
we can talk about the group of $2$-morphisms 
$\bsym{G}_2(x)$ for any object $x$. There is a feedback homomorphism 
\[ \opn{D} :  \bsym{G}_2(x) \to \bsym{G}_1(x) , \]
and a twisting 
\[ \opn{Ad}(g) =  
\opn{Ad}_{\bsym{G}_1 \curvearrowright \bsym{G}_{2}}(g) :
\bsym{G}_2(x) \to \bsym{G}_2(y)  \]
for any $g : x \to y$.
These satisfy the conditions of a crossed groupoid.

\section{The Deligne $2$-Groupoid}
\label{sec:del-2-grpd}

\begin{dfn}
A DG Lie algebra $\g =  \bigoplus_{i \in \Z}\, \g^i$ is
said to be of {\em quantum type} if $\g^i = 0$ for all $i < -1$. 

A DG Lie algebra
$\til{\g} =  \bigoplus_{i \in \Z}\, \til{\g}^i$ is said to be of
{\em quasi quantum type} if there exists
a quantum type DG Lie algebra $\g$, and a DG Lie algebra quasi-isomorphism 
$\til{\g} \to \g$.
\end{dfn}

\begin{exa} \label{exa:102}
Let $C$ be a commutative $\K$-algebra. The DG Lie algebras 
$\mcal{T}_{\mrm{poly}}(C)$ and $\mcal{D}_{\mrm{poly}}(C)$
that occur in deformation quantization are of quantum type (and hence the
name). 

Let $\g$ be a quantum type DG Lie algebra. Consider the DG Lie algebra 
$\til{\g} := (\opn{L} \circ \opn{C})(\g)$;
this is the bar-cobar construction discussed in Section \ref{sec:L-infty}. 
There is a quasi-isomorphism 
$\zeta_{\g} : \til{\g} \to \g$,
so $\til{\g}$ is of quasi quantum type (but is unbounded in the negative
direction). 
\end{exa}

Suppose $\g$ is a quantum type DG Lie algebra, and $R$ is artinian. Then the 
{\em Deligne $2$-groupoid} of $\m \ot \g$ is defined; see \cite{Ge}. 
In this section we show how this construction can be extended in two ways: 
$\g$ can be of quasi quantum type, and $R$ can be complete (i.e.\ not artinian).

Now let $\g$ be any DG Lie algebra, and $(R, \m)$ any parameter algebra. We have
the set $\opn{MC}(\m \hot \g)$ of MC elements, and the gauge
group $\opn{exp}(\m \hot \g^0)$.

Given $\om \in \opn{MC}(\m \hot \g)$ there is an $R$-bilinear function 
$[-,-]_{\om}$ on $\m \hot \g^{-1}$, whose formula is 
\begin{equation}
[\al_1, \al_2]_{\om} := [ \d_{\om}(\al_1), \al_2 ] , 
\end{equation}
where $\d_{\om} = \d + \opn{ad}(\om)$.

Define  
\begin{equation}
 \a_{\om} := \opn{Coker}(\d_{\om} : \m \hot \g^{-2} \to 
\m \hot \g^{-1}) ,
\end{equation}
so there is an exact sequence of $R$-modules
\begin{equation} \label{eqn:107}
\m \hot \g^{-2} \xar{\d_{\om}} \m \hot \g^{-1} \to \a_{\om} \to 0 .
\end{equation}

\begin{prop} \label{prop:110}
Take any $\om \in \opn{MC}(\m \hot \g)$.
\begin{enumerate}
\item Let $\al \in \m \hot \g^{-1}$ and 
$\be \in \m \hot \g^{-2}$.
Write $\al' := \d_{\om}(\be) \in \m \hot \g^{-1}$.
Then 
\[ [\al, \al']_{\om} , \ [\al', \al]_{\om} \in \d_{\om}(\m \ot \g^{-2}) . \]

\item The induced $R$-bilinear function $[-,-]_{\om}$ on $\a_{\om}$ is a Lie
bracket. Thus $\a_{\om}$ is a Lie algebra.

\item Let $g \in \opn{exp}(\m \hot \g^0)$, and let 
$\om' := \opn{Af}(g)(\om) \in \opn{MC}(\m \hot \g)$. Then 
\[ \opn{Ad}(g) : \a_{\om} \to \a_{\om'} \]
is an isomorphism of Lie algebras.
\end{enumerate}
\end{prop}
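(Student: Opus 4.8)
The entire argument rests on two properties of the twisted differential $\d_{\om} = \d + \opn{ad}(\om)$: by Proposition \ref{prop:100}(1) it is a square-zero degree $1$ derivation of the graded Lie bracket, and consequently---as already recorded in the proof of Lemma \ref{lem:4}(2)---it satisfies $\d_{\om}([\al_1, \al_2]_{\om}) = [\d_{\om}(\al_1), \d_{\om}(\al_2)]$ for $\al_1, \al_2 \in \m \hot \g^{-1}$. I will also use repeatedly the operator identity $\opn{ad}(\d_{\om}(\al)) = \d_{\om} \circ \opn{ad}(\al) + \opn{ad}(\al) \circ \d_{\om}$, the graded commutator of $\d_{\om}$ with $\opn{ad}(\al)$, which again follows from the derivation property. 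With these in hand, part (1) is a direct computation. Writing $\al' = \d_{\om}(\be)$, the relation $\d_{\om} \circ \d_{\om} = 0$ gives $[\al', \al]_{\om} = [\d_{\om}(\al'), \al] = 0$; and the displayed identity gives $[\al, \al']_{\om} = [\d_{\om}(\al), \d_{\om}(\be)] = \d_{\om}([\al, \be]_{\om})$, which lies in $\d_{\om}(\m \hot \g^{-2})$ since $[\al, \be]_{\om} = [\d_{\om}(\al), \be] \in \m \hot \g^{-2}$.

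For part (2) I would first treat well-definedness and the easy axioms, which come directly from part (1). Altering the first argument of $[-,-]_{\om}$ by $\d_{\om}(\be)$ changes the value by $[\d_{\om}^2(\be), -] = 0$, and altering the second argument by $\d_{\om}(\be)$ changes it by an element of the image; hence $[-,-]_{\om}$ descends to a well-defined, visibly $R$-bilinear bracket on $\a_{\om}$. Antisymmetry modulo the image is the identity $[\al_1, \al_2]_{\om} + [\al_2, \al_1]_{\om} = \d_{\om}([\al_1, \al_2])$, where on the right $[\al_1, \al_2] \in \m \hot \g^{-2}$ is the untwisted bracket; this is a short computation with the derivation property and graded antisymmetry.

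The hard part is the Jacobi identity, and I expect it to be the main obstacle. Setting $x_i := \d_{\om}(\al_i)$ and applying the displayed identity to each summand, the Jacobiator reduces to $J := \sum_{\mrm{cyc}} [[x_i, x_j], \al_k]$, and the goal becomes $J \in \d_{\om}(\m \hot \g^{-2})$. My plan is to evaluate $J$ modulo the image in two independent ways, both expressed through the single element $S := \sum_{\mrm{cyc}} [x_i, [x_j, \al_k]]$. Expanding $[[x_i, x_j], \al_k] = [x_i, [x_j, \al_k]] - [x_j, [x_i, \al_k]]$, regrouping by the outer factor, and substituting the antisymmetry relation above (the leftover terms of the form $[x_i, \d_{\om}(\cdots)]$ lying in the image by the operator identity) yields $J \equiv 2S$. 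On the other hand, writing $[[x_i, x_j], \al_k] = [\d_{\om}([\al_i, \al_j]_{\om}), \al_k]$, integrating $\d_{\om}$ by parts, and using graded antisymmetry yields $J \equiv -S$. Comparing the two gives $3S \equiv 0$ modulo $\d_{\om}(\m \hot \g^{-2})$; since $\K$ has characteristic $0$ this forces $S \equiv 0$, and therefore $J \equiv 0$. This characteristic-zero cancellation is the one genuinely non-formal step of the proof.

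Finally, part (3) is immediate from Proposition \ref{prop:100}(2). That proposition makes $\opn{Ad}(g)$ an isomorphism of DG Lie algebras $(\m \hot \g)_{\om} \to (\m \hot \g)_{\om'}$; in particular it is $R$-linear, preserves the graded bracket $[-,-]$, and intertwines the differentials, $\opn{Ad}(g) \circ \d_{\om} = \d_{\om'} \circ \opn{Ad}(g)$. The intertwining relation shows that $\opn{Ad}(g)$ carries $\d_{\om}(\m \hot \g^{-2})$ onto $\d_{\om'}(\m \hot \g^{-2})$, so it descends to an $R$-linear isomorphism $\a_{\om} \to \a_{\om'}$; and the computation $\opn{Ad}(g)([\al_1, \al_2]_{\om}) = [\opn{Ad}(g)(\al_1), \opn{Ad}(g)(\al_2)]_{\om'}$, which uses precisely that $\opn{Ad}(g)$ respects $[-,-]$ and intertwines the differentials, shows the descended map is a homomorphism for the twisted brackets. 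Hence it is an isomorphism of Lie algebras.
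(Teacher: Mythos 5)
Your proposal is correct and takes essentially the same approach as the paper: the paper's proof consists of the single assertion that (1) and (2) are ``easy direct calculations'' and that (3) is a consequence of Proposition \ref{prop:100}, and your argument is precisely such a direct calculation for (1)--(2) together with the same appeal to Proposition \ref{prop:100} for (3). All the details check out, including the one non-obvious step: the two evaluations $J \equiv 2S$ and $J \equiv -S$ modulo $\d_{\om}(\m \hot \g^{-2})$, which force $3S \equiv 0$ and hence, in characteristic $0$, the Jacobi identity on $\a_{\om}$.
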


\begin{proof}
(1) and (2) are easy direct calculations. (3) is a consequence of Proposition
\ref{prop:100}.
\end{proof}

\begin{prop} \label{prop:103}
Assume either of these two conditions holds\tup{:}
\begin{itemize}
\rmitem{i} $R$ is artinian.  
\rmitem{ii} $\g$ is of quasi quantum type.
\end{itemize}
Then for any $\om \in \opn{MC}(\m \hot \g)$ the 
$R$-module $\a_{\om}$ is $\m$-adically complete. Hence 
$\a_{\om}$ is a pronilpotent Lie algebra.
\end{prop}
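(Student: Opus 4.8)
The plan is to dispose of hypothesis (i) instantly and to reduce hypothesis (ii) to two Mittag--Leffler vanishing statements, only one of which will require the quasi quantum type assumption.

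Under hypothesis (i) the ideal $\m$ is nilpotent, so for every $R$-module $M$ the tower defining $\what{M}$ stabilizes and $M=\what{M}$ automatically; in particular $\a_{\om}$ is complete and there is nothing to prove. For hypothesis (ii) I would work level by level, writing $\om_j:=p_j(\om)$ for the projections $p_j:R\to R_j$ and $p_{j,j-1}:R_j\to R_{j-1}$. Right exactness of $R_j\ot_R(-)$, together with $R_j\ot_R(\m\hot\g^i)=\m_j\ot\g^i$, identifies $R_j\ot_R\a_{\om}$ with the level-$j$ cokernel $\a_{\om_j}:=\opn{Coker}(\d_{\om_j}:\m_j\ot\g^{-2}\to\m_j\ot\g^{-1})$, so $\what{\a_{\om}}=\lim_{\leftarrow j}\a_{\om_j}$ and the task becomes to prove that the canonical map $\a_{\om}\to\lim_{\leftarrow j}\a_{\om_j}$ is bijective. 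Set $I:=\opn{Im}(\d_{\om}:\m\hot\g^{-2}\to\m\hot\g^{-1})$ and $I_j:=\opn{Im}(\d_{\om_j}:\m_j\ot\g^{-2}\to\m_j\ot\g^{-1})$, so that $\a_{\om}=(\m\hot\g^{-1})/I$.

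The first key observation is that image subsystems have surjective transition maps. Since $\d_{\om}$ commutes with the projections, lifting a preimage shows $I_j\to I_{j-1}$ is surjective, so $\{I_j\}$ is Mittag--Leffler and ${\lim}^1 I_j=0$. Applying $\lim_{\leftarrow j}$ to the exact sequences $0\to I_j\to\m_j\ot\g^{-1}\to\a_{\om_j}\to0$ then gives $\lim_{\leftarrow j}\a_{\om_j}=(\m\hot\g^{-1})/\lim_{\leftarrow j}I_j$, and $\lim_{\leftarrow j}I_j$ is exactly the $\m$-adic closure $\ol{I}$ of $I$. Hence $\a_{\om}\to\what{\a_{\om}}$ is always surjective, and it is injective precisely when $I=\ol{I}$, that is, when $I$ is closed. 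So the entire question reduces to the closedness of $I$.

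The heart of the matter is therefore this closedness, and it is where the hypothesis is genuinely used. From the surjections $\m_j\ot\g^{-2}\surj I_j$ with kernels $Z^{-2}_j:=\opn{Ker}(\d_{\om_j}:\m_j\ot\g^{-2}\to\m_j\ot\g^{-1})$, the sequences $0\to Z^{-2}_j\to\m_j\ot\g^{-2}\to I_j\to0$ yield, after $\lim_{\leftarrow j}$, that $I=\lim_{\leftarrow j}I_j=\ol{I}$ as soon as ${\lim}^1 Z^{-2}_j=0$. To obtain this I would invoke quasi quantum type: choosing a quantum type $\h$ and a quasi-isomorphism $\phi:\g\to\h$, and setting $\chi_j:=p_j(\phi(\om))$, Lemma \ref{lem:5}(1) (applied over the artinian ring $R_j$) gives $\mrm{H}^{-2}((\m_j\ot\g)_{\om_j})\cong\mrm{H}^{-2}((\m_j\ot\h)_{\chi_j})$, and the latter vanishes because $\h^{-2}=0$. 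Consequently $Z^{-2}_j=\opn{Im}(\d_{\om_j}:\m_j\ot\g^{-3}\to\m_j\ot\g^{-2})$ is itself an image subsystem, its transition maps are surjective, and ${\lim}^1 Z^{-2}_j=0$. I expect exactly this passage --- converting the homological vanishing $\mrm{H}^{-2}=0$ into the Mittag--Leffler property of $\{Z^{-2}_j\}$, and thence into closedness of $I$ --- to be the main obstacle, since for a general unbounded $\g$ the module $I$ can fail to be closed and $\a_{\om}$ then fails to be separated.

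Combining the two vanishings shows $\a_{\om}\to\what{\a_{\om}}$ is bijective, so $\a_{\om}$ is $\m$-adically complete. Pronilpotence is then formal: by Proposition \ref{prop:110}(2) the bracket is $[\al_1,\al_2]_{\om}=[\d_{\om}(\al_1),\al_2]$, which strictly raises the $\m$-adic order, so each $\a_{\om}/\m^{k+1}\a_{\om}$ is a nilpotent Lie algebra and $\a_{\om}=\lim_{\leftarrow k}\a_{\om}/\m^{k+1}\a_{\om}$ is pronilpotent.
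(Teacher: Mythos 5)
Your proposal is correct, and for case (i) and for the surjectivity of $\tau_{\om} : \a_{\om} \to \what{\a_{\om}}$ it agrees in substance with the paper: the paper derives surjectivity from the fact that $\m$-adic completion preserves surjections (citing \cite{Ye5}), which is the same mechanism as your ${\lim}^1 I_j = 0$. Where you genuinely diverge is the injectivity half. The paper stays at the completed level: it observes that $\d_{\om} : \a_{\om} \to \m \hot \g^0$ factors through $\what{\a_{\om}}$, so that
$\opn{Ker}(\tau_{\om}) \subset \mrm{H}^{-1}(\m \hot \g)_{\om}$,
and then transports this kernel along the isomorphism $\mrm{H}^{-1}(\bsym{1} \hot \phi)$ supplied by Lemma \ref{lem:6}(1) (the completed quasi-isomorphism, itself proved by a Mittag--Leffler mapping-cone argument) to $\opn{Ker}(\tau_{\chi})$ on the quantum-type side, where $\a_{\chi} = \m \hot \h^{-1}$ is complete. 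You instead work levelwise: you reduce completeness to closedness of $I = \opn{Im}(\d_{\om})$, and obtain closedness from ${\lim}^1 Z^{-2}_j = 0$, which you extract by converting the levelwise vanishing $\mrm{H}^{-2}((\m_j \ot \g)_{\om_j}) = 0$ (Lemma \ref{lem:5}(1) applied over the artinian ring $R_j$, plus $\h^{-2} = 0$) into surjectivity of the transition maps of $\{ Z^{-2}_j \}$. So you use $\mrm{H}^{-2}$-vanishing where the paper uses the $\mrm{H}^{-1}$-identification; your route is somewhat more self-contained (it needs only the artinian Lemma \ref{lem:5}(1), not the completed Lemma \ref{lem:6}(1)) and it isolates precisely where quasi quantum type enters, whereas the paper's description of $\opn{Ker}(\tau_{\om})$ inside $\mrm{H}^{-1}$ is reused later in the $\pi_2$ computation of formula (\ref{eqn:110}). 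One caveat you should make explicit: the identification $R_j \ot_R (\m \hot \g^i) = \m_j \ot \g^i$ carries an index shift (since $R_j \ot_R \m = \m / \m^{j+2}$, harmless for inverse limits) and, because the modules $\g^i$ may be infinitely generated, it rests on the nontrivial noetherian base-change fact $\what{M} / \m^{j} \what{M} \cong M / \m^{j} M$; this fact is available and is of the same provenance as the results of \cite{Ye5} the paper already invokes, but it deserves a citation rather than a bare assertion.
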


\begin{proof}
If $R$ is artinian then any $R$-module is $\m$-adically complete.

Now assume $R$ is not artinian (namely it is a complete noetherian ring). 
If $\g$ is of quantum type then 
$\a_{\om} = \m \hot \g^{-1}$, which is $\m$-adically complete (cf.\ 
\cite[Corollary 3.5]{Ye5}). 

For any DG Lie algebra $\g$ the canonical homomorphism 
\begin{equation} \label{eqn:108}
\tau_{\om} : \a_{\om} \to \what{\a_{\om}} = 
\lim_{\leftarrow i}\, (R_i \ot_R \a_{\om})
\end{equation}
is surjective. Here is the reason: the completion functor $M \mapsto \what{M}$
is not exact (neither right nor left exact), but it does preserve surjections
(see \cite[Proposition 1.2]{Ye5}). Combining this with the
exact sequence (\ref{eqn:107}), and the fact that $\m \hot \g^{-1}$ is
complete, it follows that the homomorphism $\tau_{\om}$ is surjective.

It remains to prove that if there exists a DG Lie algebra quasi-isomorphism
$\phi : \g \to \h$, for some quantum type DG Lie algebra $\h$, then the
homomorphism $\tau_{\om}$ is injective. Since 
$\d_{\om} : \a_{\om} \to \m \hot \g^{0}$ factors through 
$\what{\a_{\om}}$, it follows that 
\[ \opn{Ker}(\tau_{\om}) \subset 
\opn{Ker}(\d_{\om} : \a_{\om} \to \m \hot \g^{0}) = 
\mrm{H}^{-1}(\m \hot \g)_{\om} . \]
Let $\chi := \phi(\om)$. We have a commutative diagram with exact rows 
\[ \UseTips \xymatrix @C=5ex @R=5ex {
0
\ar[r]
&
\mrm{H}^{-1}(\m \hot \g)_{\om}
\ar[r]
\ar[d]_{\mrm{H}^{-1}(\bsym{1} \hot \phi)}
&
\a_{\om}
\ar[r]^(0.35){\d_{\om}}
\ar[d]
&
(\m \hot \g^0)_{\om}
\ar[d]
\\
0
\ar[r]
&
\mrm{H}^{-1}(\m \hot \h)_{\chi}
\ar[r]
&
\a_{\chi}
\ar[r]^(0.35){\d_{\chi}}
&
(\m \hot \h^0)_{\chi} \ .
} \]
Because $\phi$ is a quasi-isomorphism, so is 
\[ \bsym{1} \hot \phi : (\m \hot \g)_{\om} \to 
(\m \hot \h)_{\chi} \]
(we are using Lemma \ref{lem:6}(1)).
Hence the vertical arrow $\mrm{H}^{-1}(\bsym{1} \hot \phi)$ in the diagram is an
isomorphism of $R$-modules. It sends $\opn{Ker}(\tau_{\om})$ bijectively to 
$\opn{Ker}(\tau_{\chi} : \a_{\chi} \to \what{\a_{\chi}})$.
We know that $\a_{\chi}$ is complete, so  $\opn{Ker}(\tau_{\chi}) = 0$.
\end{proof}

\begin{cor} \label{cor:101}
In the situation of Proposition \tup{\ref{prop:103}}, for every 
$\om \in \opn{MC}(\m \hot \g)$
there is a pronilpotent group 
$N_{\om} := \opn{exp}(\a_{\om})$, and a group homomorphism 
\[ \opn{D}_{\om} :=  \opn{exp}(\d_{\om}) : N_{\om} \to 
\opn{exp}(\m \hot \g^0)(\om)  . \]

Given any $g \in \opn{exp}(\m \hot \g^0)$
and $\om \in \opn{MC}(\m \hot \g)$, let 
$\om' := \opn{Af}(g)(\om) \in$ \lb $\opn{MC}(\m \hot \g)$. Then there is a
group isomorphism
\[ \Psi(g) := \opn{exp}(\opn{Ad}(g)) : N_{\om} \iso N_{\om'} , \]
and the diagram 
\[ \UseTips \xymatrix @C=5ex @R=5ex {
N_{\om}
\ar[d]_{\Psi(g)}
\ar[r]^(0.34){\opn{D}_{\om}}
&
\opn{exp}(\m \hot \g^0)(\om)
\ar[d]^{\opn{Ad}(g)}
\\
N_{\om'}
\ar[r]^(0.34){\opn{D}_{\om'}}
&
\opn{exp}(\m \hot \g^0)(\om')
} \]
is commutative.
The isomorphisms $\Psi(g)$ are an action of the group 
$\opn{exp}(\m \hot \g^0)$ on the collection of groups 
$\{ N_{\om} \}_{\om \in \opn{MC}(\m \hot \g)}$.

Moreover, for any $a, a' \in N_{\om}$ we have 
\[ \Psi( \opn{D}_{\om}(a))(a') = 
\opn{Ad}_{N_{\om}}(a)(a') . \]
\end{cor}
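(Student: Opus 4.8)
The plan is to derive every group‑level assertion by exponentiating a corresponding statement at the level of pronilpotent Lie algebras, relying on two standard facts: a homomorphism $\psi$ of pronilpotent Lie algebras exponentiates to a group homomorphism $\exp(\psi)$, functorially in $\psi$ and compatibly with composition; and $\opn{Ad}(\exp(\ga)) = \exp(\opn{ad}(\ga))$, as already recorded in (\ref{eqn:11}). Proposition \ref{prop:103} is what makes this legitimate: it guarantees that $\a_{\om}$ is pronilpotent, so that $N_{\om} := \exp(\a_{\om})$ is a pronilpotent group and all the series below converge in the $\m$-adic topology.

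For the feedback map I would first note that $\d_{\om} \circ \d_{\om} = 0$ (Proposition \ref{prop:100}(1)) makes $\d_{\om} : \m \hot \g^{-1} \to \m \hot \g^{0}$ factor through the cokernel $\a_{\om}$, with image inside $(\m \hot \g^0)(\om) = \opn{Ker}(\d_{\om})$; and $\exp$ carries that subalgebra onto the stabilizer $\opn{G}(\g, R)(\om, \om) = \opn{exp}(\m \hot \g^0)(\om)$ by Lemma \ref{lem:4}(1). The crux is that $\d_{\om}$ is a homomorphism of Lie algebras $\a_{\om} \to (\m \hot \g^0)(\om)$, where $\a_{\om}$ carries the bracket $[-,-]_{\om}$ of Proposition \ref{prop:110}(2): this is exactly the identity $\d_{\om}([\al_1, \al_2]_{\om}) = [\d_{\om}(\al_1), \d_{\om}(\al_2)]$ established inside the proof of Lemma \ref{lem:4}(2). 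Exponentiating this Lie homomorphism yields $\opn{D}_{\om} = \exp(\d_{\om})$.

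For the twisting I would invoke Proposition \ref{prop:110}(3), which already supplies the Lie algebra isomorphism $\opn{Ad}(g) : \a_{\om} \iso \a_{\om'}$, so that $\Psi(g) := \exp(\opn{Ad}(g))$ is a group isomorphism. The commuting square $\opn{D}_{\om'} \circ \Psi(g) = \opn{Ad}(g) \circ \opn{D}_{\om}$ is the exponential of the Lie algebra square $\d_{\om'} \circ \opn{Ad}(g) = \opn{Ad}(g) \circ \d_{\om}$, which is precisely Lemma \ref{lem:3} descended to $\a_{\om}$; here I use that $\opn{Ad}(g)$ on the gauge group is conjugation (\ref{eqn:10}), i.e.\ $\exp(\opn{ad}(\ga))$. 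That the $\Psi(g)$ constitute an action reduces to the multiplicativity $\opn{Ad}(g_2 \circ g_1) = \opn{Ad}(g_2) \circ \opn{Ad}(g_1)$ of the adjoint operators on $\m \hot \g$, together with $\opn{Ad}(1) = \opn{id}$; applying $\exp$ gives $\Psi(g_2 \circ g_1) = \Psi(g_2) \circ \Psi(g_1)$ and $\Psi(1) = \opn{id}$.

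Finally, the identity $\Psi(\opn{D}_{\om}(a))(a') = \opn{Ad}_{N_{\om}}(a)(a')$ I would verify by unwinding definitions. Writing $a = \exp(\al)$ with $\al \in \a_{\om}$, we have $\opn{D}_{\om}(a) = \exp(\d_{\om}(\al))$, so by (\ref{eqn:11}) the left side is $\exp$ of the operator $\opn{ad}(\d_{\om}(\al))$ acting on $\a_{\om}$, while the right side is $\exp$ of $\opn{ad}_{\a_{\om}}(\al)$, the inner derivation of $\al$ for the bracket $[-,-]_{\om}$. These operators agree by the very definition $[\al, \al']_{\om} = [\d_{\om}(\al), \al']$, so applying $\exp$ and evaluating at $a'$ concludes. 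I expect no genuine obstacle here, only bookkeeping: before each exponentiation one must check that the Lie operator in question descends to the cokernel $\a_{\om}$ (already ensured by Propositions \ref{prop:110} and \ref{prop:103}), and one must keep the three flavours of $\opn{Ad}$ apart — the one on $\m \hot \g$, the one on the gauge group, and the inner automorphism $\opn{Ad}_{N_{\om}}$ — all of which are reconciled through (\ref{eqn:11}).
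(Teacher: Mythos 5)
Your proposal is correct and takes essentially the same route as the paper: the paper's entire proof is the one line ``Combine Propositions \ref{prop:100}, \ref{prop:110} and \ref{prop:103},'' and your argument is precisely the intended expansion of that combination, exponentiating the Lie-algebra-level facts (the factorization of $\d_{\om}$ through $\a_{\om}$, the identity $\d_{\om}([\al_1,\al_2]_{\om}) = [\d_{\om}(\al_1), \d_{\om}(\al_2)]$, Lemma \ref{lem:3} for the commuting square, and Proposition \ref{prop:103} for convergence). Your unwinding of the crossed-module identity via $\opn{ad}(\d_{\om}(\al)) = \opn{ad}_{\a_{\om}}(\al)$ on representatives is exactly the verification the paper leaves implicit.
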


\begin{proof}
Combine Propositions \ref{prop:100}, \ref{prop:110} and \ref{prop:103}.
\end{proof}

\begin{rem}
The Lie algebra $\a_{\om}^{\mrm{r}}$ and the group $N_{\om}^{\mrm{r}}$ that
occur in Section 
\ref{sec:red-del} are quotients, respectively, of the Lie algebra $\a_{\om}$ and
the group $N_{\om}$ that occur here. 
\end{rem}

\begin{dfn} \label{dfn:103}
Let $\g$ be a DG Lie algebra and $R$ a parameter algebra. 
Assume either of these two conditions holds:
\begin{itemize}
\rmitem{i} $R$ is artinian.  
\rmitem{ii} $\g$ is of quasi quantum type.
\end{itemize} 
The {\em Deligne $2$-groupoid} $\mbf{Del}^2(\g, R)$
is the transformation $2$-groupoid (see Example \ref{exa:100} and Proposition 
\ref{prop:102}) associated to the action of the group 
$\opn{exp}(\m \hot \g^0)$
on the collection of groups 
$\{ N_{\om} \}_{\om \in \opn{MC}(\m \hot \g)}$.
The feedback $\opn{D}_{\om}$ and the twisting $\Psi(g)$ are specified in
Corollary \ref{cor:101}.
\end{dfn}

\begin{prop}
Consider pairs $(\g, R)$ such that Deligne $2$-groupoid $\mbf{Del}^2(\g, R)$
is defined, i.e.\ either of the two conditions in Definition \tup{\ref{dfn:103}}
holds. 
\begin{enumerate}
\item The Deligne $2$-groupoid $\mbf{Del}^2(\g, R)$ depends functorially on 
$\g$ and $R$. 

\item The reduced Deligne groupoid satisfies
\[ \mbf{Del}^{\mrm{r}}(\g, R) = \pi_1(\mbf{Del}^2(\g, R)) . \]
\end{enumerate}
\end{prop}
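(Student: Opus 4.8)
The plan is to treat the two assertions separately, proving (2) first since it is the substantive one, and then deducing (1) from the naturality of every ingredient. For (2), I would unwind the description of $\pi_1$ from Section \ref{sec:2-grpd} together with the explicit presentation of the $2$-groupoid given by Proposition \ref{prop:102}. The objects of both $\pi_1(\mbf{Del}^2(\g, R))$ and $\mbf{Del}^{\mrm{r}}(\g, R)$ are $\opn{MC}(\m \hot \g)$, so only the morphism sets need comparison. By Proposition \ref{prop:102}(ii), for $1$-morphisms $f, g : \om \to \om'$ a $2$-morphism $f \twoto g$ is an element $a \in N_{\om}$ with $g = f \circ \opn{D}_{\om}(a)$; reading this in the transformation groupoid, where horizontal composition is the multiplication of $\opn{G}(\g, R)$, it says $f^{-1} g \in \opn{D}_{\om}(N_{\om})$, i.e.\ $g \in f \cdot \opn{D}_{\om}(N_{\om})$. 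The key computation is then $\opn{D}_{\om}(N_{\om}) = N^{\mrm{r}}_{\om}$: since $\opn{D}_{\om} = \opn{exp}(\d_{\om})$ and $\a_{\om}$ is the cokernel of $\d_{\om} : \m \hot \g^{-2} \to \m \hot \g^{-1}$ by the exact sequence (\ref{eqn:107}), the image of $\d_{\om} : \a_{\om} \to \m \hot \g^0$ coincides with $\opn{Im}(\d_{\om} : \m \hot \g^{-1} \to \m \hot \g^0) = \a^{\mrm{r}}_{\om}$, whence $\opn{D}_{\om}(N_{\om}) = \opn{exp}(\a^{\mrm{r}}_{\om}) = N^{\mrm{r}}_{\om}$. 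Comparing with formula (\ref{eqn:16}) this yields
\[ \pi_1(\mbf{Del}^2(\g, R), \om, \om') = \opn{G}(\g, R)(\om, \om') / N^{\mrm{r}}_{\om} = \opn{G}^{\mrm{r}}(\g, R)(\om, \om') , \]
which is exactly the morphism set of $\mbf{Del}^{\mrm{r}}(\g, R)$.

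To finish (2) I would check that the two groupoid structures agree on the nose, not just on morphism sets. The horizontal composition in $\mbf{Del}^2(\g, R)$ is the composition of its $1$-truncation, which by Proposition \ref{prop:102}(i) is the multiplication of $\opn{G}(\g, R)$; hence the induced composition on $\pi_1$ is precisely the one of formula (\ref{eqn:68}), and the identity morphisms are the classes of the identities of $\opn{G}(\g, R)(\om, \om)$. These coincide with the composition and identities defining $\mbf{Del}^{\mrm{r}}(\g, R)$, so the two groupoids are literally equal.

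For (1), I would observe that every structure map in Definition \ref{dfn:103} is natural in $(\g, R)$, and restrict to the full subcategory of admissible pairs. Given a DG Lie algebra homomorphism $\phi : \g \to \h$ and a parameter-algebra homomorphism $f : (R, \m) \to (S, \n)$ between admissible pairs, the induced DG Lie homomorphism $f \ot \phi : \m \hot \g \to \n \hot \h$ carries $\opn{MC}$ to $\opn{MC}$ and the gauge group to the gauge group, and it intertwines $\d_{\om}$ with $\d_{\chi}$, where $\chi := (f \ot \phi)(\om)$, because it commutes with $\d$ and with $[-,-]$. Consequently it descends to the cokernel presentations to give $\a_{\om} \to \a_{\chi}$, and after applying $\opn{exp}$ a group homomorphism $N_{\om} \to N_{\chi}$ compatible with the feedback $\opn{D}$ and the twisting $\Psi$ of Corollary \ref{cor:101}. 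This is exactly a morphism of the underlying transformation crossed groupoids, so by Proposition \ref{prop:102} it produces a $2$-functor $\mbf{Del}^2(\phi, f) : \mbf{Del}^2(\g, R) \to \mbf{Del}^2(\h, S)$; functoriality of the assignment $(\phi, f) \mapsto \mbf{Del}^2(\phi, f)$ then follows from functoriality of $f \ot \phi$, of $\opn{exp}$, and of passing to cokernels.

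The hard part will not be any deep difficulty but the bookkeeping in the crossed-groupoid translation. For (2) one must keep the left/right conventions straight, so that the condition $g = f \circ \opn{D}_{\om}(a)$ is correctly read as the right-coset condition $g \in f \cdot N^{\mrm{r}}_{\om}$ reproducing the quotient (\ref{eqn:16}); the crucial algebraic input is the identity $\opn{D}_{\om}(N_{\om}) = N^{\mrm{r}}_{\om}$. For (1) the delicate point is to confirm that $f \ot \phi$ genuinely respects the cokernel presentation (\ref{eqn:107}) of $\a_{\om}$, and hence the feedback and twisting, so that conditions (i)--(ii) of a crossed groupoid are preserved and Proposition \ref{prop:102} applies verbatim.
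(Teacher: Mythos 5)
Your proposal is correct and matches the paper's approach: the paper's entire proof is ``This is immediate from the constructions,'' and your argument simply spells out what that unwinding amounts to. In particular, you correctly identify the one substantive computation, namely that $\d_{\om} : \a_{\om} \to \m \hot \g^0$ (induced on the cokernel presentation (\ref{eqn:107})) has image $\a^{\mrm{r}}_{\om}$, so that $\opn{D}_{\om}(N_{\om}) = N^{\mrm{r}}_{\om}$ and the $2$-morphism relation of Proposition \ref{prop:102}(ii) is exactly the right-coset relation defining the quotient (\ref{eqn:16}).
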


\begin{proof}
This is immediate from the constructions.
\end{proof}

\begin{thm} \label{thm:105}
Let $R$ be a parameter algebra, let $\g$ and $\h$ be
DG Lie algebras, and let $\phi : \g \to \h$ be a DG Lie algebra
quasi-isomorphism. Assume either of these two conditions holds:
\begin{itemize}
\rmitem{i} $R$ is artinian.  
\rmitem{ii} $\g$ and $\h$ are of quasi quantum type.
\end{itemize}  
Then the morphism of $2$-groupoids
\[ \mbf{Del}^2(\phi, R) : \mbf{Del}^2(\g, R) \to \mbf{Del}^2(\h, R) \]
is a weak equivalence.
\end{thm}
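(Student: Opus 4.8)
The plan is to verify directly that $\mbf{Del}^2(\phi, R)$ induces bijections on the three homotopy invariants $\pi_0$, $\pi_1$ and $\pi_2$, since this is exactly what it means for a morphism in $\cat{2-Grpd}$ to be a weak equivalence. The guiding idea is that each of these invariants of the Deligne $2$-groupoid has already been computed, implicitly or explicitly, in the earlier sections: $\pi_0$ and $\pi_1$ are governed by the reduced Deligne groupoid, while $\pi_2$ is a degree $-1$ cohomology of a twisted complex. Conditions (i) and (ii) are used only to guarantee, via Proposition \ref{prop:103}, that both $\mbf{Del}^2(\g, R)$ and $\mbf{Del}^2(\h, R)$ are defined in the first place; once that is granted, no genuinely new computation is required.

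For the first two levels I would invoke the identity $\pi_1(\mbf{Del}^2(\g, R)) = \mbf{Del}^{\mrm{r}}(\g, R)$ of formula (\ref{eqn:109}). Applying $\pi_0$ gives $\pi_0(\mbf{Del}^2(\g, R)) = \pi_0(\mbf{Del}^{\mrm{r}}(\g, R)) = \ol{\opn{MC}}(\m \hot \g)$, and under this identification $\pi_0(\mbf{Del}^2(\phi, R))$ is the function $\ol{\opn{MC}}(\bsym{1}_R \otimes \phi)$, which is bijective by Theorem \ref{thm:2}. At a basepoint $\om \in \opn{MC}(\g, R)$ the same formula gives $\pi_1(\mbf{Del}^2(\g, R), \om) = \opn{G}^{\mrm{r}}(\g, R)(\om, \om)$, and $\pi_1(\mbf{Del}^2(\phi, R), \om)$ is the group homomorphism $\opn{G}^{\mrm{r}}(\phi, R)$; its bijectivity is precisely assertion (c) in the proof of Theorem \ref{thm:2}. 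Thus $\pi_0$ and $\pi_1$ are handled entirely by Theorem \ref{thm:2}.

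The substantive step is $\pi_2$. Here I would unwind the crossed-groupoid description from Definition \ref{dfn:103} and Corollary \ref{cor:101}: a $2$-morphism $\bsym{1}_\om \twoto \bsym{1}_\om$ is an element $a \in N_\om$ with $\opn{D}_\om(a) = 1$, so $\pi_2(\mbf{Del}^2(\g, R), \om) = \opn{Ker}(\opn{D}_\om)$. Since $\opn{D}_\om = \opn{exp}(\d_\om)$ for the Lie algebra homomorphism $\d_\om : \a_\om \to \m \hot \g^0$, and since the bracket $[-,-]_\om$ vanishes identically on $\opn{Ker}(\d_\om)$, the exponential restricts to a group isomorphism from the additive group $\opn{Ker}(\d_\om : \a_\om \to \m \hot \g^0)$ onto $\opn{Ker}(\opn{D}_\om)$. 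By the exact sequence (\ref{eqn:107}) this additive group is $\mrm{H}^{-1}((\m \hot \g)_\om)$, so $\pi_2(\mbf{Del}^2(\g, R), \om) \cong \mrm{H}^{-1}((\m \hot \g)_\om)$, and $\pi_2(\mbf{Del}^2(\phi, R), \om)$ becomes the map $\mrm{H}^{-1}(\phi)$ induced by $\phi : (\m \hot \g)_\om \to (\m \hot \h)_{\phi(\om)}$. By Lemma \ref{lem:6}(1) this $\phi$ is a quasi-isomorphism, so $\mrm{H}^{-1}(\phi)$ is bijective.

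With all three maps bijective at every basepoint $\om$, the morphism is a weak equivalence. I expect the only delicate point to be the $\pi_2$ identification — reading off $\opn{Ker}(\opn{D}_\om)$ from the crossed-groupoid data and checking that $\opn{exp}$ carries $\opn{Ker}(\d_\om)$ isomorphically onto it, compatibly with $\phi$. Everything else is bookkeeping on top of Theorem \ref{thm:2} and Lemma \ref{lem:6}(1).
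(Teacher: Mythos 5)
Your proposal is correct and takes essentially the same approach as the paper's proof: $\pi_0$ and $\pi_1$ are handled via the identification $\pi_1(\mbf{Del}^2(\g, R)) = \mbf{Del}^{\mrm{r}}(\g, R)$ together with Theorem \ref{thm:2}, and $\pi_2$ via the exponential bijection $\mrm{H}^{-1}((\m \hot \g)_{\om}) \cong \opn{Ker}(\opn{D}_{\om})$ combined with the quasi-isomorphism of Lemma \ref{lem:6}(1). The only difference is one of detail: you spell out the verification that $\exp$ carries the abelian Lie algebra $\opn{Ker}(\d_{\om} : \a_{\om} \to \m \hot \g^0)$ isomorphically onto $\opn{Ker}(\opn{D}_{\om})$, which the paper compresses into formula (\ref{eqn:110}) with a pointer to the commutative diagram in the proof of Proposition \ref{prop:103}.
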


\begin{proof}
Since 
\[ \pi_0(\mbf{Del}^2(\phi, R)) = \pi_0(\mbf{Del}^{\mrm{r}}(\phi, R)) \]
and 
\[ \pi_1(\mbf{Del}^2(\phi, R), \om) = 
\pi_1(\mbf{Del}^{\mrm{r}}(\phi, R), \om) \]
for all $\om \in \opn{MC}(\m \hot \g)$,
these are bijections by Theorem \ref{thm:2}.

Next, there is a functorial bijection 
\begin{equation} \label{eqn:110}
\begin{aligned}
\exp & : \mrm{H}^{-1}(\m \hot \g)_{\om} \iso 
\\
& \qquad 
\opn{Ker} \bigl( \opn{D}_{\om} : N_{\om} \to \opn{exp}(\m \hot \g^0) \bigr) =
\pi_2(\mbf{Del}^2(\g, R), \om) ;
\end{aligned}
\end{equation}
cf.\ the commutative diagram in the proof of Proposition \ref{prop:103}. 
So 
\[ \pi_2(\mbf{Del}^2(\phi, R), \om) = 
\mrm{H}^{-1}(\bsym{1}_{\m} \hot \phi) \]
is bijective. 
\end{proof}

\begin{rem}
It is possible to define a Deligne $2$-groupoid $\mbf{Del}^2(\g, R)$ even when
both conditions in Definition \ref{dfn:103} fail, by taking 
$N_{\om} := \exp(\what{\a_{\om}})$. 
However the function $\exp$ in (\ref{eqn:110}) might fail to be bijective.
Hence, in the situation of Theorem \ref{thm:105}, the homomorphism 
$\pi_2(\mbf{Del}^2(\phi, R), \om)$ might fail to be bijective.
\end{rem}

\section{$\mrm{L}_{\infty}$ Morphisms and Coalgebras}
\label{sec:L-infty}

We shall use the coalgebra approach to $\mrm{L}_{\infty}$ morphisms, following
\cite{Qu}, \cite[Section 4]{Ko2}, \cite{Hi2}, \cite[Section 3.7]{CKTB} and
\cite[Section 3]{Ye2}. 

Let us denote by $\cat{DGLie}(\K)$ the category of DG Lie algebras over $\K$,
and by $\cat{DGCog}(\K)$ the category of DG unital cocommutative coalgebras
over $\K$. Note that commutativity here is in the graded (or super) sense.
Recall that a unital coalgebra $C$ has a comultiplication 
$\Delta : C \to C \ot C$,
a counit $\ep : C \to \K$ and a unit $1 \in C$. The differential $\d$ has to be
a coderivation of degree $1$. The conditions on the unit are
$\Delta(1) = 1 \ot 1$, $\d(1) = 0$ and $\ep(1) = 1$ in $\K$. 
Morphisms in $\cat{DGCog}(\K)$ are $\K$-linear homomorphisms
$C \to D$ respecting
$\Delta$, $\ep$ and $1$. We write $C^+ := \opn{Ker}(\ep)$.

Let $V = \boplus_{i \in \Z} V^i$ be a graded $\K$-module. The 
symmetric algebra over $\K$ of the graded module $V$ is 
\[ \opn{Sym}(V) = \bigoplus_{i \in \N}\, \opn{Sym}^j(V) . \]
Note that we are in the super-commutative setting, so
$\opn{Sym}^j(V) = \bwedge^j(V[1])[-j]$.
We view $\opn{Sym}(V)$ as a Hopf algebra over $\K$, 
which is commutative and cocommutative. The unit is 
$1 \in \K = \opn{Sym}^0(V)$, and the counit is the projection
$\ep : \opn{Sym}(V) \to \K$.

The Hopf algebra $\opn{Sym}(V)$ is bigraded, with one grading coming from the
grading of $V$, which we call degree. The second grading is called order; by
definition the $j$-th order component of $\opn{Sym}(V)$ is $\opn{Sym}^j(V)$.
Let us write
\[ \opn{Sym}^+(V) := \bigoplus_{i \geq 1}\, \opn{Sym}^j(V) =
\opn{Ker}(\ep) . \]

The projection $\opn{Sym}(V) \to \opn{Sym}^1(V)$ is denoted by $\ln$.
So for an element $c \in \opn{Sym}(V)$, its first order term is $\ln(c)$.
Recall that giving a homomorphism of unital graded coalgebras
$\Psi : \opn{Sym}(V) \to \opn{Sym}(W)$ 
is the same as giving its sequence of Taylor coefficients 
$\{ \pa^j\Psi \}_{j \geq 1}$,
where the $j$-th Taylor coefficient of $\Psi$ is the $\K$-linear function 
\[ \partial^j \Psi := (\ln \circ \Psi)|_{\opn{Sym}^j(V)} : 
\opn{Sym}^j(V) \to W  \]
of degree $0$.

The free graded Lie algebra over the graded $\K$-module $V$ is denoted by \lb
$\opn{FrLie}(V)$. 

There is a functor 
\[ \opn{C} : \cat{DGLie}(\K) \to \cat{DGCog}(\K) \]
called the {\em bar construction}. Given a DG Lie algebra $\g$, the
corresponding DG coalgebra is 
$\opn{C}(\g) := \opn{Sym}(\g[1])$, with a coderivation that encodes both the
differential of $\g$ and its Lie bracket. 
There is another functor 
\[ \opn{L} : \cat{DGCog}(\K) \to \cat{DGLie}(\K)   \]
called the {\em cobar construction}. Given a DG coalgebra $C$, the
corresponding DG Lie algebra is $\opn{FrLie}(C^+[-1])$, with a derivation that
encodes both the differential of $C$ and its comultiplication. 
If $\g \in \cat{DGLie}(\K)$ and $C \in \cat{DGCog}(\K)$, then the set of graded
$\K$-linear homomorphisms $\opn{Hom}^{\mrm{gr}}(C, \g)$ 
is a DG Lie algebra, and there are functorial bijections
\begin{equation} \label{eqn:60}
\opn{Hom}_{\cat{DGLie}(\K)}(\opn{L}(C), \g) \cong
\opn{Hom}_{\cat{DGCog}(\K)}(C, \opn{C}(\g)) \cong
\opn{MC}(\opn{Hom}^{\mrm{gr}}(C, \g)) . 
\end{equation}
Thus the functors $\opn{C}$ and $\opn{L}$ are adjoint.
We denote the adjunction morphisms by
\[ \zeta_{\g} : (\opn{L} \circ \opn{C})(\g) \to \g \]
and 
\[ \th_C : C \to (\opn{C} \circ \opn{L})(C) .  \]

It is known that the functor $\opn{C}$ is faithful. 
Let $\g$ and $\h$ be DG Lie algebras.
By definition, an {\em $\mrm{L}_{\infty}$ morphism} $\Phi : \g \to \h$ 
is a morphism $\opn{C}(\g) \to \opn{C}(\h)$ in $\cat{DGCog}(\K)$.
Let us define $\cat{DGLie}_{\infty}(\K)$ to be the category whose objects are
the DG Lie algebras, and the morphisms are the  $\mrm{L}_{\infty}$ morphisms
between them. Composition of  $\mrm{L}_{\infty}$ morphisms is that of
$\cat{DGCog}(\K)$. Thus we get a full and faithful functor
\[ \opn{C}_{\infty} : \cat{DGLie}_{\infty}(\K) \to \cat{DGCog}(\K) \]
whose restriction to $\cat{DGLie}(\K)$ is $\opn{C}$.

Take an $\mrm{L}_{\infty}$ morphism 
$\Phi : \g \to \h$. Its $i$-th Taylor coefficient
$\pa^i \Phi := \pa^i (\opn{C}_{\infty}(\Phi))$ is a $\K$-linear function 
\[ \pa^i \Phi : \bwedge^i \g \to \h \]
of degree $1 - i$. Writing $\phi_i := \pa^i \Phi$, the sequence of 
functions $\{ \phi_i \}_{i \geq 1}$ satisfies these equations:
\[ \begin{aligned}
& \mrm{d} \bigl( \phi_i(\gamma_1 \wedge \cdots 
\wedge \gamma_i) \bigr) - \sum_{k = 1}^i \pm 
\phi_i \bigl( \gamma_1 \wedge \cdots \wedge 
\mrm{d}(\gamma_k) \wedge \cdots \wedge \gamma_i \bigr) = \\
& \quad {\smfrac{1}{2}} \sum_{\substack{k, l \geq 1 \\ k + l = i}}
{\smfrac{1}{k! \, l!}} \sum_{\sigma \in \mfrak{S}_i} \pm
\bigl[ \phi_k (\gamma_{\sigma(1)} \wedge \cdots \wedge 
\gamma_{\sigma(k)}),
\phi_l (\gamma_{\sigma(k + 1)} \wedge \cdots \wedge 
\gamma_{\sigma(i)}) \bigr] \\
& \qquad +
\sum_{k < l} \pm
\phi_{i-1} \bigl( [\gamma_k, \gamma_l] \wedge
\gamma_{1} \wedge \cdots \gamma_k \hspace{-1em} \diagup \cdots
\gamma_l \hspace{-1em} \diagup 
\cdots \wedge \gamma_{i} \bigr) .
\end{aligned} \]
Here $\gamma_k \in \mfrak{g}$ are homogeneous elements,
$\mfrak{S}_i$ is the permutation group of $\{ 1, \ldots, i \}$,
and the signs depend only on the indices, the permutations and the 
degrees of the elements $\gamma_k$. The
signs are written explicitly in \cite[Section 3.6]{CKTB}. 
Conversely, any sequence $\{ \phi_i \}_{i \geq 1}$ of homomorphisms satisfying
these equations determines an $\mrm{L}_{\infty}$ morphism.

Let $\Phi : \g \to \h$ be  an $\mrm{L}_{\infty}$ morphism.
The first Taylor coefficient  $\pa^1 \Phi : \g \to \h$ is a homomorphism of
complexes of $\K$-modules (forgetting the Lie brackets). 
If $\pa^i \Phi = 0$ for all $i \geq 2$, then 
$\pa^1 \Phi$ is a DG Lie algebra homomorphism. 
If $\pa^1 \Phi$ is a quasi-isomorphism, then $\Phi$ is called an 
{\em $\mrm{L}_{\infty}$ quasi-isomorphism}.

\begin{lem} \label{lem:20}
Let $\g \in \cat{DGLie}(\K)$, and define
$C := \opn{C}(\g)$, $\til{\g} := \opn{L}(C)$ and $\til{C} := \opn{C}(\til{g})$.
Consider the coalgebra homomorphisms 
$\th_C : C \to \til{C}$ and $\opn{C}(\zeta_{\g}) : \til{C} \to C$. 
Then 
\[ \opn{C}(\zeta_{\g}) \circ \th_C = \bsym{1}_C , \]
the identity automorphism of $C$.
\end{lem}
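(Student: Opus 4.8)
The plan is to recognize that the stated equation is precisely one of the two triangle (zig-zag) identities of the adjunction $\opn{L} \dashv \opn{C}$ recorded in the bijection (\ref{eqn:60}), and hence to deduce it formally from naturality rather than by unwinding the explicit Taylor coefficients of $\th_C$ and $\opn{C}(\zeta_{\g})$. Write $\be = \be_{C, \g}$ for the first natural bijection in (\ref{eqn:60}),
\[ \be_{C, \g} : \opn{Hom}_{\cat{DGLie}(\K)}(\opn{L}(C), \g) \iso
\opn{Hom}_{\cat{DGCog}(\K)}(C, \opn{C}(\g)) , \]
so that, by the very definition of the adjunction morphisms, the counit is $\zeta_{\g} = \be_{\opn{C}(\g), \g}^{-1}(\bsym{1}_{\opn{C}(\g)})$ and the unit is $\th_C = \be_{C, \opn{L}(C)}(\bsym{1}_{\opn{L}(C)})$.

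Next I would specialize to $C = \opn{C}(\g)$, so that $\opn{L}(C) = \til{\g}$ and $\th_C : C \to \til{C}$ is exactly the map in the statement. The key step is to apply naturality of $\be$ in the DG Lie algebra variable (keeping the coalgebra $C$ fixed) to the morphism $\zeta_{\g} : \til{\g} \to \g$ and the element $\bsym{1}_{\til{\g}} \in \opn{Hom}_{\cat{DGLie}(\K)}(\opn{L}(C), \til{\g})$, which yields
\[ \be_{C, \g}(\zeta_{\g} \circ \bsym{1}_{\til{\g}}) =
\opn{C}(\zeta_{\g}) \circ \be_{C, \til{\g}}(\bsym{1}_{\til{\g}}) =
\opn{C}(\zeta_{\g}) \circ \th_C . \]
On the other hand the left-hand side equals $\be_{C, \g}(\zeta_{\g}) = \be_{C, \g}(\be_{C, \g}^{-1}(\bsym{1}_C)) = \bsym{1}_C$, since $C = \opn{C}(\g)$. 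Combining the two displays gives $\opn{C}(\zeta_{\g}) \circ \th_C = \bsym{1}_C$, as required.

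The only point that needs care — and the mild obstacle here — is the bookkeeping: confirming that the adjunction morphisms $\th$ and $\zeta$ introduced before the lemma are genuinely the unit and counit attached to (\ref{eqn:60}), i.e.\ the images of identity morphisms under $\be$ and $\be^{-1}$, and then invoking naturality in the correct (DG Lie) variable rather than in the coalgebra variable. Once these identifications are pinned down the argument is purely formal, and no property of the bar and cobar constructions beyond the adjunction (\ref{eqn:60}) is needed. A direct verification comparing the Taylor coefficients of $\opn{C}(\zeta_{\g}) \circ \th_C$ with those of $\bsym{1}_C$ is also possible, but is unnecessarily laborious and I would avoid it.
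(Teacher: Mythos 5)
Your proposal is correct and matches the paper's proof in substance: the paper disposes of the lemma in one line by citing Mac Lane (Section IV.1, Theorem 1) for the fact that the unit and counit of any adjunction satisfy the triangle identities, and your argument is exactly the standard derivation of the relevant triangle identity $\opn{C}(\zeta_{\g}) \circ \th_{\opn{C}(\g)} = \bsym{1}_{\opn{C}(\g)}$ from naturality of the hom-set bijection (\ref{eqn:60}). Your bookkeeping (identifying $\th$ as the unit, $\zeta$ as the counit, and invoking naturality in the DG Lie variable) is accurate, so you have merely unpacked the citation rather than taken a different route.
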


\begin{proof}
This is true for any pair of adjoint functors; see 
\cite[Section IV.1 Theorem 1]{Ma}.
\end{proof}

\begin{lem} \label{lem:21}
Let $\Phi : \g \to \h$ be an $\mrm{L}_{\infty}$ quasi-isomorphism. Then 
\[ (\opn{L} \circ \opn{C}_{\infty})(\Phi) : 
(\opn{L} \circ \opn{C})(\g) \to (\opn{L} \circ \opn{C})(\h) \]
is a DG Lie algebra quasi-isomorphism.
\end{lem}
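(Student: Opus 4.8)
The plan is to compare the cobar--bar morphism with the linear part $\phi_1 := \pa^1 \Phi$ by means of the canonical sections of the counit $\zeta$. Recall from Example \ref{exa:102} that $\zeta_{\g} : (\opn{L} \circ \opn{C})(\g) = \til{\g} \to \g$ is a quasi-isomorphism, and similarly $\zeta_{\h} : \til{\h} \to \h$. Write $\Psi := \opn{C}_{\infty}(\Phi) : \opn{C}(\g) \to \opn{C}(\h)$ for the coalgebra morphism defining $\Phi$, so that the map in question is $\opn{L}(\Psi)$.

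First I would introduce the sections. Since $\til{\g} = \opn{L}(\opn{C}(\g)) = \opn{FrLie}(\opn{C}(\g)^+[-1])$ is free on the generators $\opn{C}(\g)^+[-1] = \opn{Sym}^+(\g[1])[-1]$, let $\iota_{\g} : \g \to \til{\g}$ be the inclusion of the order-one, word-length-one generators $\opn{Sym}^1(\g[1])[-1] = \g$. On these generators the reduced comultiplication vanishes (they are primitive), so the cobar differential restricts there to the internal differential of $\opn{C}(\g)$; hence $\iota_{\g}$ is a morphism of complexes. Moreover $\zeta_{\g}$ restricts to the identity on the order-one generators, so $\zeta_{\g} \circ \iota_{\g} = \bsym{1}_{\g}$. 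Because $\zeta_{\g}$ is a quasi-isomorphism, $\iota_{\g}$ is a quasi-isomorphism as well, and likewise $\iota_{\h}$.

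The key step is the identity
\[ \opn{L}(\Psi) \circ \iota_{\g} = \iota_{\h} \circ \phi_1 . \]
To verify it, recall that $\opn{L}(\Psi)$ acts on the generators $\opn{C}(\g)^+[-1]$ by $\Psi^+[-1]$ followed by the inclusion of generators of $\til{\h}$. The submodule $\opn{Sym}^1(\g[1])$ (the image of $\iota_{\g}$, up to the shift) consists of the primitives of the coalgebra $\opn{C}(\g) = \opn{Sym}(\g[1])$; since over a field of characteristic $0$ the primitives of $\opn{Sym}(V)$ are exactly $\opn{Sym}^1(V)$, and since the coalgebra morphism $\Psi$ carries primitives to primitives, $\Psi$ maps $\opn{Sym}^1(\g[1])$ into $\opn{Sym}^1(\h[1])$. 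By definition of the Taylor coefficients this restriction is $\pa^1 \Psi = \phi_1$ (suitably shifted), with no higher-order components. Thus $\opn{L}(\Psi)$ sends the order-one generators of $\til{\g}$ to the order-one generators of $\til{\h}$ via $\phi_1$, which is the asserted identity.

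Finally I would conclude by the two-out-of-three property. Passing to cohomology in the displayed identity gives
\[ \mrm{H}\bigl( \opn{L}(\Psi) \bigr) \circ \mrm{H}(\iota_{\g}) = \mrm{H}(\iota_{\h}) \circ \mrm{H}(\phi_1) . \]
Here $\mrm{H}(\iota_{\g})$ and $\mrm{H}(\iota_{\h})$ are isomorphisms, and $\mrm{H}(\phi_1)$ is an isomorphism because $\Phi$ is an $\mrm{L}_{\infty}$ quasi-isomorphism. Hence $\mrm{H}\bigl( \opn{L}(\Psi) \bigr) = \mrm{H}(\iota_{\h}) \circ \mrm{H}(\phi_1) \circ \mrm{H}(\iota_{\g})^{-1}$ is an isomorphism, so $(\opn{L} \circ \opn{C}_{\infty})(\Phi) = \opn{L}(\Psi)$ is a quasi-isomorphism. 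The main obstacle is the key step: checking that no higher-order Taylor components appear, i.e.\ that $\opn{L}(\Psi)$ really restricts to $\phi_1$ on the linear generators. This rests precisely on the facts that coalgebra morphisms preserve primitives and that, in characteristic $0$, the primitives of $\opn{Sym}(V)$ coincide with $V$.
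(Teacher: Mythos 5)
Your proof is correct, but it takes a genuinely different route from the paper's. The paper argues entirely inside Hinich's filtered machinery: it equips $C = \opn{C}(\g)$ and $D = \opn{C}(\h)$ with the admissible coalgebra filtrations $F_j C = \boplus_{k=0}^{j} \opn{Sym}^k(\g[1])$, quotes step 2 of the proof of \cite[Proposition 4.4.3]{Hi2} to see that $\Psi = \opn{C}_{\infty}(\Phi)$ is a filtered quasi-isomorphism, and then applies \cite[Proposition 4.4.4]{Hi2} to conclude that $\opn{L}(\Psi)$ is a quasi-isomorphism. You instead build the chain-level section $\iota_{\g}$ of the counit $\zeta_{\g}$ out of the order-one generators, verify the naturality square $\opn{L}(\Psi) \circ \iota_{\g} = \iota_{\h} \circ \phi_1$ by the primitive-element argument, and finish by composing isomorphisms on cohomology. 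The individual steps all check out: $\iota_{\g}$ is a chain map because the bar coderivation preserves $\opn{Sym}^1(\g[1])$ (its bracket component, a Taylor coefficient of order $2$, acts only on order $\geq 2$) and the reduced comultiplication vanishes on primitives; $\zeta_{\g}$ restricted to generators is the projection onto order one, giving $\zeta_{\g} \circ \iota_{\g} = \bsym{1}_{\g}$; and a morphism in $\cat{DGCog}(\K)$ preserves $1$, hence primitives, while in characteristic $0$ the primitives of $\opn{Sym}(V)$ are exactly $\opn{Sym}^1(V)$ (this is where the hypothesis on $\K$ enters -- in characteristic $p$ the element $x^p$ is primitive and the argument breaks). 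What your approach buys is the explicit identification $\mrm{H}(\opn{L}(\Psi)) = \mrm{H}(\iota_{\h}) \circ \mrm{H}(\phi_1) \circ \mrm{H}(\iota_{\g})^{-1}$, replacing the filtered quasi-isomorphism apparatus by an elementary coalgebra computation. Note, however, that it does not remove the dependence on Hinich, only relocates it: the input that $\zeta_{\g}$ and $\zeta_{\h}$ are quasi-isomorphisms is precisely \cite[Proposition 4.4.3(1)]{Hi2} (asserted in Example \ref{exa:102} and used again in the proof of Theorem \ref{thm:3}), which is itself proved by a filtration argument of the same kind the paper's proof invokes directly; since Lemma \ref{lem:21} is nowhere used to establish that fact, your argument is not circular within the paper's logical structure.
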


\begin{proof}
Let us write
$C := \opn{C}(\g)$, $D := \opn{C}(\h)$ and $\Psi := \opn{C}_{\infty}(\Phi)$. 
Put on $C$ the ascending filtration 
$F_j C := \boplus_{k = 0}^j \opn{Sym}^k(\g[1])$; 
so that $\{ F_j C \}_{j \geq 0}$ is an admissible coalgebra filtration, in the
sense of \cite[Definition 4.4.1]{Hi2}. Likewise there is an 
admissible coalgebra filtration $\{ F_j D \}_{j \geq 0}$ on $D$.
According to step 2 of the proof of \cite[Proposition 4.4.3]{Hi2}, the
coalgebra homomorphism $\Psi$ is a filtered quasi-isomorphism. 
Hence by \cite[Proposition 4.4.4]{Hi2} the DG Lie algebra homomorphism 
$(\opn{L} \circ \opn{C}_{\infty})(\Phi) = \opn{L}(\Psi)$
is a quasi-isomorphism.
\end{proof}

\section{$\mrm{L}_{\infty}$ Quasi-isomorphisms between Pronilpotent Algebras}
\label{sec:L-infty-cplt}

Let $(R, \m)$ be a parameter $\K$-algebra, and let
$\Phi : \g \to \h$ be an $\mrm{L}_{\infty}$ morphism between DG Lie algebras. 
Then $\Phi$ extends uniquely to an $R$-multilinear $\mrm{L}_{\infty}$ morphism 
\[ \Phi_{R} : R \hot \g \to R \hot \h , \]
whose $i$-th Taylor coefficient
\[ \partial^i \Phi_{R} :
\underset{i}{\underbrace{(R \hot \g) \times \cdots \times 
(R \hot \g)}} \to R \hot \h \]
is the $R$-multilinear homogeneous extension of 
$\partial^i \Phi$. See \cite[Section 3]{Ye2} for more details. 

\begin{dfn} \label{dfn:6}
Let $\Phi : \g \to \h$ be an $\mrm{L}_{\infty}$ morphism, and let 
$(R, \m)$ be a parameter algebra. For an element 
$\om \in \m \hot \g^1$ we define
\[ \opn{MC}(\Phi, R)(\om) := 
\sum_{i \geq 1} \, \smfrac{1}{i!} (\partial^i \Phi_{R})
(\underset{i}{\underbrace{\om, \ldots, \om}}) \in \m \hot \h^1 . \]
\end{dfn}

Note that the sum above converges in the $\m$-adic topology of 
$\m \hot \h^1$, since 
\[ (\partial^i \Phi_{R})(\om, \ldots, \om)
\in \m^i \hot \h^1 = \m^{i-1} \cdot (\m \hot \h^1) . \]

\begin{prop} \label{prop:4}
Let 
$\om \in \opn{MC}(\g, R) = \opn{MC}(\m \hot \g)$.
Then the element
$\opn{MC}(\Phi, R)(\om)$ belongs to $\opn{MC}(\h, R)$.
Thus we get a function 
\[ \opn{MC}(\Phi, R) : \opn{MC}(\g, R) \to \opn{MC}(\h, R) . \]
\end{prop}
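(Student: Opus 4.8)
The plan is to recast the statement in the coalgebra language of Section~\ref{sec:L-infty}, where Maurer--Cartan elements appear as group-like elements killed by the codifferential, and where an $\mrm{L}_{\infty}$ morphism is literally a map of coalgebras. Let $Q_{\g}$ denote the codifferential of $\opn{C}(\g) = \opn{Sym}(\g[1])$ (the coderivation that encodes $\d$ and $[-,-]$), and similarly $Q_{\h}$ on $\opn{C}(\h)$. The morphism $\Phi$ is, by definition, a homomorphism $\opn{C}_{\infty}(\Phi) : \opn{C}(\g) \to \opn{C}(\h)$ commuting with $Q_{\g}$ and $Q_{\h}$. First I would pass to the $R$-multilinear, $\m$-adically completed version: applying $\m \hot (-)$ to the cofree coalgebras and completing, I obtain an $R$-linear coalgebra homomorphism $\Psi_R := \opn{C}_{\infty}(\Phi_R)$ between $\what{\opn{Sym}}(\m \hot \g[1])$ and $\what{\opn{Sym}}(\m \hot \h[1])$, still commuting with the completed codifferentials; the needed $R$-linear formalism is set up in \cite[Section 3]{Ye2}.

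Next I would record the group-like description of $\opn{MC}$. For $\om \in \m \hot \g^1$ --- an element of degree $0$ in $\m \hot \g[1]$ --- the series $\exp(\om) := \bosum_{i \geq 0} \smfrac{1}{i!} \om^{i}$ converges $\m$-adically and defines a group-like element of the completed coalgebra. A standard computation shows $Q_{\g}(\exp(\om)) = \exp(\om) \cdot \bigl( \d(\om) + \smfrac{1}{2}[\om, \om] \bigr)$, the product taken in the (completed) symmetric algebra; since $\exp(\om)$ is invertible there, this proves
\[ \om \in \opn{MC}(\g, R) \iff Q_{\g}(\exp(\om)) = 0 . \]
The same equivalence holds for $\h$.

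Now I would identify $\Psi_R(\exp(\om))$. Being a homomorphism of counital coalgebras, $\Psi_R$ carries group-like elements to group-like elements, so $\Psi_R(\exp(\om))$ is group-like and hence equals $\exp$ of its first-order component $\ln(\Psi_R(\exp(\om)))$. By the definition of the Taylor coefficients this first-order component is precisely $\bosum_{i \geq 1} \smfrac{1}{i!} (\pa^i \Phi_R)(\om, \ldots, \om)$, which is $\opn{MC}(\Phi, R)(\om) =: \chi$. Thus $\Psi_R(\exp(\om)) = \exp(\chi)$. Assuming $\om \in \opn{MC}(\g, R)$, the previous paragraph gives $Q_{\g}(\exp(\om)) = 0$, and since $\Psi_R$ commutes with the codifferentials,
\[ Q_{\h}(\exp(\chi)) = Q_{\h}(\Psi_R(\exp(\om))) = \Psi_R(Q_{\g}(\exp(\om))) = 0 , \]
so by the $\h$-version of the equivalence $\chi = \opn{MC}(\Phi, R)(\om) \in \opn{MC}(\h, R)$.

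The main obstacle I expect is not the formal skeleton above but the analytic bookkeeping forced by the pronilpotent setting: one must verify that $\exp(\om)$ actually lies in the completed coalgebra, that $\Psi_R$ extends continuously to the completion and still commutes with $Q$ there, and that the identity $Q_{\g}(\exp(\om)) = \exp(\om) \cdot \bigl( \d(\om) + \smfrac{1}{2}[\om, \om] \bigr)$ survives completion. Each of these is controlled by the order-raising property $(\pa^i \Phi_R)(\om, \ldots, \om) \in \m^i \hot \h^1$ already noted after Definition~\ref{dfn:6}, which guarantees $\m$-adic convergence throughout. A purely computational alternative --- expanding $\d(\chi) + \smfrac{1}{2}[\chi, \chi]$ and matching terms against the defining equations of the $\mrm{L}_{\infty}$ morphism together with the Maurer--Cartan equation for $\om$ --- is available but considerably messier, and I would keep it only as a fallback.
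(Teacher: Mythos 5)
Your argument is correct in substance, but it takes a different route from the paper, and the difference is instructive. The paper's proof is two lines: it reduces to the artinian quotients $R_j = R/\m^{j+1}$, where the statement is exactly \cite[Theorem 3.21]{Ye2} (whose proof is essentially your group-like computation, carried out where it is painless), and then passes to the limit using Lemma \ref{lem:6}(2), i.e.\ the fact that $\opn{MC}(\h, R) \cong \lim_{\leftarrow j} \opn{MC}(\h, R_j)$ because $\opn{MC}(\h,R)$ is a closed subset of the complete module $\m \hot \h^1$; compatibility of $\opn{MC}(\Phi, -)$ with the projections $p_j$ is immediate from the defining formula. You instead inline the coalgebra mechanism and push it through the completion directly. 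What your route buys is self-containedness -- no citation of the artinian theorem -- and it correctly identifies the key identity $Q_{\g}(\exp(\om)) = \exp(\om) \cdot \bigl( \d(\om) + \smfrac{1}{2}[\om,\om] \bigr)$ and the preservation of group-likes. What it costs is precisely the ``analytic bookkeeping'' you flag at the end: the order- and $\m$-adically completed object $\what{\opn{Sym}}(\m \hot \g[1])$ is not an object of $\cat{DGCog}(\K)$, since $\Delta$ takes values in a completed tensor product, so ``group-like,'' ``$\Psi_R$ extends continuously,'' and ``coalgebra morphisms preserve group-likes'' all need to be re-founded in a topological-coalgebra setting. This is doable (the order-raising property you cite does control convergence, since the order-$k$ component of $\Psi_R(\exp(\om))$ is an $\m$-adically convergent series with $j$-th term in $\m^j \hot \opn{Sym}^k(\h[1])$), but it is not free, and it is telling that the paper itself dodges exactly this point: in the proof of Lemma \ref{lem:18}, where the same $\exp(\om)$ computation is used, the first sentence invokes Lemma \ref{lem:6}(2) to ``assume that $R$ is artinian,'' so that $\exp(\om)$ is a finite sum inside the honest coalgebra $R \ot C$. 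The cleanest way to discharge your deferred verifications is therefore the paper's own device: run your group-like argument over each artinian quotient $R_j$, where it is elementary, and recover the complete case from $p_j\bigl(\opn{MC}(\Phi,R)(\om)\bigr) = \opn{MC}(\Phi,R_j)(p_j(\om))$ together with the inverse-limit description of $\opn{MC}(\h,R)$ -- at which point your proof and the paper's coincide.
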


\begin{proof}
Let $R_j := R / \m^{j+1}$, and let $p_j : R \to R_j$ be the projection.
According to \cite[Theorem 3.21]{Ye2}, which refers to the artinian case, we
have
\[ \opn{MC}(\Phi, R_j)(p_j(\om)) \in \opn{MC}(\h , R_j) \]
for every $j$. And by Lemma \ref{lem:6}(2) we know that 
\[ \opn{MC}(\h, R) \cong 
\lim_{\leftarrow j}\, \opn{MC}(\h, R_j) . \]
\end{proof}

Let $\Om_{\K[t]} = \K[t] \oplus \Om^1_{\K[t]}$ be the algebra of polynomial
differential forms
in the variable $t$ (relative to $\K$). This is a commutative DG
algebra. For $\la \in \K$ there is a DG algebra homomorphism
$\sg_{\la} : \Om_{\K[t]} \to \K$, namely $t \mapsto \la$.
There is an induced homomorphism of DG Lie algebras 
\begin{equation} \label{eqn:2}
\sg_{\la}  :  \m \hatotimes \Om_{\K[t]} \hatotimes \g \to 
\m \hatotimes \g ,
\end{equation}
and an induced function
\begin{equation} \label{eqn:61}
\ol{\opn{MC}}(\sg_{\la}) : 
\ol{\opn{MC}} ( \m \hatotimes \Om_{\K[t]} \hatotimes \g) 
\to \ol{\opn{MC}}(\m \hatotimes \g) 
\end{equation}

We shall often think of elements of $\Om_{\K[t]}$ as 
``functions of $t$'', as in Section \ref{sec:facts}. Given elements
$f(t) \in \Om_{\K[t]}$ and $\la \in \K$,
we shall use the substitution notation  
$f(\la) := \sg_{\la} (f(t)) \in \K$.

\begin{lem} \label{lem:16}
Here $\lambda = 0, 1$. 
\begin{enumerate}
\item The homomorphisms $\sg_0, \sg_1$ in formula \tup{(\ref{eqn:2})}
are quasi-iso\-morphisms.

\item The functions $\ol{\opn{MC}}(\sg_0)$ and $\ol{\opn{MC}}(\sg_1)$
in formula \tup{(\ref{eqn:61})} are bijections.

\item The bijections 
$\ol{\opn{MC}}(\sg_0)$ and $\ol{\opn{MC}}(\sg_1)$ are equal.
\end{enumerate} 
\end{lem}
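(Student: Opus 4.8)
The plan is to realize each $\sg_\la$ (for $\la = 0, 1$) as arising from a DG Lie algebra quasi-isomorphism over $\K$, feed this into Theorem \ref{thm:2}, and then deduce the equality of the two resulting bijections formally from the existence of a common section. The key algebraic input is this: the DG algebra homomorphism $\sg_\la : \Om_{\K[t]} \to \K$ (with $t \mapsto \la$ and $\d t \mapsto 0$) is a quasi-isomorphism of complexes of $\K$-modules, since the polynomial de Rham complex $\K[t] \xar{\d} \Om^1_{\K[t]}$ has $\mrm{H}^0 = \K$ (the constants) and $\mrm{H}^1 = 0$ (every $g(t)\, \d t$ is exact, as $\opn{char}(\K) = 0$), and $\sg_\la$ realizes these identifications. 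Since $\Om_{\K[t]} \ot \g$ is a DG Lie algebra over $\K$ (a commutative DG algebra tensored with a DG Lie algebra) and $\g$ is a complex of flat $\K$-modules, the K\"unneth formula shows that $\sg_\la : \Om_{\K[t]} \ot \g \to \g$ is a DG Lie algebra quasi-isomorphism over $\K$, for every $\la$.

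Granting this, Part (1) is the completed version of the statement. Using the identification $\m \hot \Om_{\K[t]} \hot \g = \m \hot (\Om_{\K[t]} \ot \g)$, the map in (\ref{eqn:2}) is $\bsym{1}_{\m} \hot \sg_\la$. Forgetting the Lie brackets, I would apply Lemma \ref{lem:6}(1) to the quasi-isomorphism $\sg_\la$ with the trivial Maurer--Cartan element $\om = 0$: reduce modulo $\m^{j+1}$, where $\m_j$ is a finite-dimensional (hence flat) $\K$-module so that tensoring preserves the quasi-isomorphism, and then pass to the inverse limit by the Mittag--Leffler argument. Part (2) is then immediate: applying Theorem \ref{thm:2} to the DG Lie algebra quasi-isomorphism $\phi = \sg_\la : \Om_{\K[t]} \ot \g \to \g$ shows that $\ol{\opn{MC}}(\bsym{1}_R \ot \sg_\la) = \ol{\opn{MC}}(\sg_\la)$ is a bijection.

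For Part (3) I would avoid integrating a gauge flow and instead exploit a common section. Let $u : \K \to \Om_{\K[t]}$ be the unit, i.e.\ the inclusion of constant $0$-forms; it is a DG algebra homomorphism with $\sg_\la \circ u = \bsym{1}_{\K}$ for every $\la$. Tensoring with $\g$ and completing yields a DG Lie algebra homomorphism $\iota : \m \hot \g \to \m \hot \Om_{\K[t]} \hot \g$ with $\sg_\la \circ \iota = \bsym{1}$, independently of $\la$. By functoriality of $\ol{\opn{MC}}$ one gets
\[ \ol{\opn{MC}}(\sg_\la) \circ \ol{\opn{MC}}(\iota) = \bsym{1} \]
for both $\la = 0$ and $\la = 1$. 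Since $\ol{\opn{MC}}(\sg_\la)$ is a bijection by Part (2), its unique two-sided inverse must coincide with the right inverse $\ol{\opn{MC}}(\iota)$; as the latter does not depend on $\la$, we conclude that $\ol{\opn{MC}}(\sg_0)^{-1} = \ol{\opn{MC}}(\iota) = \ol{\opn{MC}}(\sg_1)^{-1}$, and hence $\ol{\opn{MC}}(\sg_0) = \ol{\opn{MC}}(\sg_1)$.

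The main obstacle is conceptual rather than computational. Part (3) looks as though it should require showing directly that the two endpoints $\om(0), \om(1)$ of a Maurer--Cartan ``path'' in $\m \hot \Om_{\K[t]} \hot \g$ are gauge equivalent, which would force one to solve the evolution equation coming from the $\d t$-component of the MC equation. The observation that makes the argument routine is that both evaluation maps $\sg_0, \sg_1$ admit the \emph{same} section $\iota$, so that the equality of the two bijections is forced simply by their being inverse to one and the same function, and no explicit gauge transformation ever needs to be constructed.
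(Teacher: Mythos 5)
Your proposal is correct and follows essentially the same route as the paper: the paper also deduces (2) from Theorem \ref{thm:2}, and for (3) it uses exactly your common section, namely the inclusion $\m \hot \g \to \m \hot \Om_{\K[t]} \hot \g$ of constant forms, noting $\sg_0 \circ \iota = \sg_1 \circ \iota = \bsym{1}$ and cancelling the resulting bijection. The only cosmetic difference is in (1), where the paper observes that $\sg_\la : \Om_{\K[t]} \to \K$ is a homotopy equivalence of complexes (hence remains a quasi-isomorphism after applying the additive functor $\m \hot - \hot \g$), while you reach the same completed statement via K\"unneth plus Lemma \ref{lem:6}(1) with $\om = 0$; both are valid.
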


\begin{proof}
(1) This is because the homomorphisms $\sg_i : \Om_{\K[t]} \to \K$ are
homotopy equivalences (of complexes of $\K$-modules).

\medskip \noindent
(2) This is by item (1) and Theorem \ref{thm:2}.

\medskip \noindent
(3) Note that the inclusion
\[ \phi : \m \hot \g \to \m \hatotimes \Om_{\K[t]} \hatotimes \g \]
is also a quasi-isomorphism, and that 
$\sg_0 \circ \phi = \sg_1 \circ \phi$ is the identity automorphism of 
$\m \hot \g$. Hence 
\[ \ol{\opn{MC}}(\sg_0) \circ \ol{\opn{MC}}(\phi) = 
\ol{\opn{MC}}(\sg_1) \circ \ol{\opn{MC}}(\phi) , \]
and canceling the bijection $\ol{\opn{MC}}(\phi)$ we obtain our result.
\end{proof}

\begin{lem} \label{lem:17}
Let $\om_0, \om_1 \in \opn{MC}(\m \hatotimes \g)$. The following two
conditions are equivalent:
\begin{enumerate}
\rmitem{i} There exists an element 
$g \in \opn{exp}(\m \hatotimes \g^0)$ such that 
\[ \opn{Af}(g)(\om_0) = \om_1 . \]
\rmitem{ii} There exists an element 
\[ \om(t) \in 
\opn{MC}( \m \hatotimes \Om_{\K[t]} \hatotimes \g) \]
such that 
$\om(0) = \om_0$ and $\om(1) = \om_1$. 
\end{enumerate}
\end{lem}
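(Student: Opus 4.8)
The two conditions are the two standard descriptions of equivalence of MC elements: condition (i) is gauge equivalence inside $\m \hot \g$, while condition (ii) says that $\om_0$ and $\om_1$ are the specializations $\sg_0(\om(t))$ and $\sg_1(\om(t))$ of a single MC element over the polynomial de Rham algebra. Since Lemma \ref{lem:16} was built precisely to compare these two notions, I would route both implications through it. The implication (ii)$\Rightarrow$(i) is then immediate: given $\om(t)$ as in (ii), passing to gauge equivalence classes gives $\ol{\opn{MC}}(\sg_0)([\om(t)]) = [\om_0]$ and $\ol{\opn{MC}}(\sg_1)([\om(t)]) = [\om_1]$ in $\ol{\opn{MC}}(\m \hot \g)$. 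By Lemma \ref{lem:16}(3) the two maps $\ol{\opn{MC}}(\sg_0)$ and $\ol{\opn{MC}}(\sg_1)$ coincide, so $[\om_0] = [\om_1]$; unwinding the definition of $\ol{\opn{MC}}$, this is exactly the existence of some $g \in \opn{exp}(\m \hot \g^0)$ with $\opn{Af}(g)(\om_0) = \om_1$.

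For the implication (i)$\Rightarrow$(ii) I must actually exhibit a path, and here I would use that $\ol{\opn{MC}}(\sg_0)$ is surjective (Lemma \ref{lem:16}(2)). Choose any $\om(t) \in \opn{MC}(\m \hot \Om_{\K[t]} \hot \g)$ whose class maps to $[\om_0]$. Then $\sg_0(\om(t))$ is gauge equivalent to $\om_0$, and, by Lemma \ref{lem:16}(3) together with the hypothesis $[\om_0] = [\om_1]$, also $\sg_1(\om(t))$ is gauge equivalent to $\om_1$. Fix elements $g_0, g_1 \in \opn{exp}(\m \hot \g^0)$ realizing these two equivalences. The only remaining task is to correct both endpoints simultaneously, so that the path specializes to $\om_0$ and $\om_1$ exactly rather than merely up to gauge.

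To do this I would interpolate the two gauge elements over $\K[t]$. Put $\ga_\la := \log(g_\la)$ for $\la = 0,1$, and set $\delta(t) := (1-t)\,\ga_0 + t\,\ga_1 \in \m \hot \K[t] \hot \g^0$, a degree-$0$ element with no $\d t$-component, so that $G := \opn{exp}(\delta(t))$ lies in the gauge group of $\m \hot \Om_{\K[t]} \hot \g$. Because $\sg_\la$ is a homomorphism of DG Lie algebras it intertwines the affine gauge action, i.e.\ $\sg_\la(\opn{Af}(G)(\om(t))) = \opn{Af}(\sg_\la(G))(\sg_\la(\om(t)))$, and $\sg_\la(G) = \opn{exp}(\delta(\la)) = g_\la$. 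Hence the corrected path $\til{\om}(t) := \opn{Af}(G)(\om(t))$ is again an MC element of $\m \hot \Om_{\K[t]} \hot \g$ and satisfies $\sg_0(\til{\om}) = \opn{Af}(g_0)(\sg_0(\om(t))) = \om_0$ and likewise $\sg_1(\til{\om}) = \om_1$, which is exactly (ii).

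The step I expect to need the most care is precisely this simultaneous endpoint correction: one must produce a single gauge element of the large algebra that specializes to the prescribed $g_0$ and $g_1$ under $\sg_0$ and $\sg_1$, without disturbing the MC condition and without creating a spurious $\d t$-term at the ends. Choosing $\delta(t)$ in the $\d t$-free part $\m \hot \K[t] \hot \g^0$ and interpolating the logarithms linearly (legitimate since $\K$ has characteristic $0$ and $\opn{exp}$ is a bijection onto the gauge group) resolves both points. As an alternative that avoids invoking surjectivity, one can construct the path by hand: with $\ga := \log(g)$, $g(t) := \opn{exp}(t \ga)$ and $a(t) := \opn{Af}(g(t))(\om_0)$, set $\om(t) := a(t) + \ga \, \d t$. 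Each $a(t)$ is an MC element, while the $\Om^1_{\K[t]}$-component of the curvature of $\om(t)$ equals $\bigl( \smfrac{\pa}{\pa t} a(t) + \d_{a(t)}(\ga) \bigr)\, \d t$, which vanishes by the gauge-flow identity $\smfrac{\pa}{\pa t} a(t) = -\d_{a(t)}(\ga)$; this is a short check from the graded Jacobi identity, exactly of the type carried out in Lemma \ref{lem:3}.
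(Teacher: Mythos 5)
Your proof is correct, and your (ii)$\Rightarrow$(i) coincides exactly with the paper's: both pass to classes in $\ol{\opn{MC}}(\m \hot \g)$ and reduce to the equality $\ol{\opn{MC}}(\sg_0) = \ol{\opn{MC}}(\sg_1)$ of Lemma \ref{lem:16}(3). The divergence is in (i)$\Rightarrow$(ii). The paper's proof of that direction is a one-line construction: with $\ga := \log(g)$ it sets $g(t) := \exp(t \ga)$, viewed inside the gauge group of the big DG Lie algebra $\m \hot \Om_{\K[t]} \hot \g$, and takes $\om(t) := \opn{Af}(g(t))(\om_0)$; since the affine action there is taken with respect to the \emph{total} differential, for which $\d(t \ga) = \ga \, \d t + t \, \d(\ga)$, the needed $\d t$-component appears automatically, the MC property is formal (gauge actions preserve MC elements in any pronilpotent DG Lie algebra), and the endpoint identities follow because $\sg_\la$ intertwines $\opn{Af}$. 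Your primary route instead invokes surjectivity of $\ol{\opn{MC}}(\sg_0)$ (Lemma \ref{lem:16}(2)) and then corrects both endpoints via the interpolated gauge element $G := \exp\bigl( (1-t)\,\ga_0 + t\,\ga_1 \bigr)$; this is valid and not circular (Lemma \ref{lem:16}(2) rests on Theorem \ref{thm:2}, and Lemma \ref{lem:17} sits downstream of both), but it is far heavier than necessary: it imports the obstruction-theoretic machinery of Theorem \ref{thm:2} into what is the easy, purely constructive direction. Indeed your own argument collapses to the paper's if you feed it the constant path $\om(t) := \om_0$ (so $g_0 = 1$, $g_1 = g$), whereupon $\opn{Af}(G)(\om(t))$ becomes exactly $\opn{Af}(\exp(t\ga))(\om_0)$. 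Your alternative hand construction $\om(t) := a(t) + \ga \, \d t$, with $a(t) := \opn{Af}(g(t))(\om_0)$ computed in $\m \hot \K[t] \hot \g$, is precisely the paper's element with its $\Om^1_{\K[t]}$-component written out, and your verification via the gauge-flow identity $\smfrac{\pa}{\pa t} a(t) = - \d_{a(t)}(\ga)$ is a correct substitute for the paper's observation that the gauge action in the big algebra preserves MC: the hand check buys explicitness, while the paper's formulation makes the computation disappear entirely.
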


\begin{proof}
(i) $\Rightarrow$ (ii): Consider the elements $\ga := \log(g) \in \m \hot
\g^0$, 
\[ g(t) := \exp(t \ga) \in \exp(\m \hot (\Om_{\K[t]} \ot \g)^0) \]
and 
\[ \om(t) :=  \opn{Af}(g(t))(\om_0) \in 
\opn{MC}(\m \hot \Om_{\K[t]} \hot \g) . \]
Then 
\[ \om(0) = \opn{Af}(g(0))(\om_0) = \opn{Af}(1)(\om_0) = \om_0 \]
and
\[ \om(1) = \opn{Af}(g(1))(\om_0) =
\opn{Af}(g)(\om_0) = \om_1 . \]

\medskip \noindent
(ii) $\Rightarrow$ (i): Let us write $[\om_i]$ for the classes in 
$\ol{\opn{MC}}(\m \hatotimes \g)$.
We know that 
\[ \ol{\opn{MC}}(\sg_i)([\om(t)]) = [\om_i] \]
for $i = 0, 1$. By Lemma \ref{lem:16}(3) we know that 
$\ol{\opn{MC}}(\sg_0) = \ol{\opn{MC}}(\sg_1)$.
Therefore $[\om_0] = [\om_1]$; and by definition this says that 
$\opn{Af}(g)(\om_0) = \om_1$ for some $g$. 
\end{proof}

\begin{prop} \label{prop:5}
Let $\Phi : \g \to \h$ be an $\mrm{L}_{\infty}$ morphism.
The function 
\[ \opn{MC}(\Phi, R) : \opn{MC}(\g, R) \to \opn{MC}(\h, R) \]
respects gauge equivalences. Therefore there is an induced function 
\[ \ol{\opn{MC}}(\Phi, R) : \ol{\opn{MC}}(\g, R) \to 
\ol{\opn{MC}}(\h, R) . \]
\end{prop}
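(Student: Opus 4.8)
The plan is to reduce the statement to the characterization of gauge equivalence by polynomial paths provided by Lemma \ref{lem:17}, and then to transport such a path across $\Phi$. First I would observe that the $\mrm{L}_{\infty}$ morphism $\Phi : \g \to \h$ extends, by $\Om_{\K[t]}$-multilinear homogeneous extension of its Taylor coefficients (exactly as $\Phi$ was extended to $\Phi_R$ at the opening of this section, and as described in \cite[Section 3]{Ye2}), to an $\mrm{L}_{\infty}$ morphism of DG Lie algebras over $\K$
\[ \Phi_{\Om} : \Om_{\K[t]} \ot \g \to \Om_{\K[t]} \ot \h . \]
Here $\Om_{\K[t]} \ot \g$ and $\Om_{\K[t]} \ot \h$ carry the total differential combining the differential of $\g$ (resp.\ $\h$) with the de Rham differential of $\Om_{\K[t]}$; the base change is compatible with these differentials because, by Section \ref{sec:L-infty}, an $\mrm{L}_{\infty}$ morphism is a homomorphism of the associated cocommutative coalgebras, and such a homomorphism base-changes functorially along the commutative DG algebra $\Om_{\K[t]}$.

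Next I would apply the already-established Proposition \ref{prop:4}, but with the DG Lie algebra $\g$ replaced by $\Om_{\K[t]} \ot \g$ and the $\mrm{L}_{\infty}$ morphism $\Phi$ replaced by $\Phi_{\Om}$. This yields a function
\[ \opn{MC}(\Phi_{\Om}, R) : \opn{MC}(\m \hatotimes \Om_{\K[t]} \hatotimes \g) \to \opn{MC}(\m \hatotimes \Om_{\K[t]} \hatotimes \h) , \]
the defining sum converging $\m$-adically exactly as in Definition \ref{dfn:6} (the factor $\Om_{\K[t]}$ plays no role in the convergence, which is driven by $\m$ alone). The crucial point is the naturality of the construction $\opn{MC}(-, R)$ with respect to the DG algebra homomorphisms $\sg_{\la} : \Om_{\K[t]} \to \K$ of \tup{(\ref{eqn:2})}: since the Taylor coefficients of $\Phi_{\Om}$ are the $\Om_{\K[t]}$-multilinear extensions of those of $\Phi$, and $\sg_{\la}$ specializes $t \mapsto \la$, $\d t \mapsto 0$, applying $\sg_{\la}$ term by term in the defining series gives
\[ \sg_{\la} \bigl( \opn{MC}(\Phi_{\Om}, R)(\om(t)) \bigr) = \opn{MC}(\Phi, R) \bigl( \sg_{\la}(\om(t)) \bigr) \]
for every $\la \in \K$ and every $\om(t) \in \opn{MC}(\m \hatotimes \Om_{\K[t]} \hatotimes \g)$.

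With these tools in place the argument is short. Suppose $\om_0, \om_1 \in \opn{MC}(\m \hatotimes \g)$ are gauge equivalent. By the implication (i) $\Rightarrow$ (ii) of Lemma \ref{lem:17} there is a path $\om(t) \in \opn{MC}(\m \hatotimes \Om_{\K[t]} \hatotimes \g)$ with $\sg_0(\om(t)) = \om_0$ and $\sg_1(\om(t)) = \om_1$. Set $\chi(t) := \opn{MC}(\Phi_{\Om}, R)(\om(t))$, which lies in $\opn{MC}(\m \hatotimes \Om_{\K[t]} \hatotimes \h)$ by the previous step. By the naturality identity above,
\[ \sg_0(\chi(t)) = \opn{MC}(\Phi, R)(\om_0) \quad \text{and} \quad \sg_1(\chi(t)) = \opn{MC}(\Phi, R)(\om_1) . \]
Thus $\chi(t)$ is a path connecting $\opn{MC}(\Phi, R)(\om_0)$ to $\opn{MC}(\Phi, R)(\om_1)$, and the implication (ii) $\Rightarrow$ (i) of Lemma \ref{lem:17} shows that these two MC elements of $\m \hatotimes \h$ are gauge equivalent. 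Hence $\opn{MC}(\Phi, R)$ respects gauge equivalence and descends to the quotient sets, giving the asserted function $\ol{\opn{MC}}(\Phi, R)$. I expect the main obstacle to be precisely the first step: justifying that $\Phi$ genuinely extends to an $\mrm{L}_{\infty}$ morphism over the nontrivial DG algebra $\Om_{\K[t]}$ (not merely over an ordinary commutative $\K$-algebra such as $R$), so that it is compatible with the de Rham differential, together with the resulting naturality of $\opn{MC}(-, R)$ under the evaluation maps $\sg_{\la}$. Once this base-change formalism is secured via the coalgebra description of Section \ref{sec:L-infty} and \cite{Ye2}, the remainder is a formal consequence of Lemma \ref{lem:17} and Proposition \ref{prop:4}.
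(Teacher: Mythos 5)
Your proposal is correct and takes essentially the same approach as the paper: there one sets $A := R \hot \Om_{\K[t]}$, invokes \cite[Section 3]{Ye2} for the induced $A$-multilinear $\mrm{L}_{\infty}$ morphism $\Phi_A$ (which is exactly what resolves the base-change-over-a-DG-algebra obstacle you flag), records the same commutative square with the evaluation maps $\sg_0, \sg_1$, and concludes by applying Lemma \ref{lem:17} in both directions, just as you do. Your repackaging of the extension as $\Phi_{\Om}$ over $\Om_{\K[t]}$ followed by Proposition \ref{prop:4} is merely a cosmetic variant of the paper's $\opn{MC}(\Phi, A)$.
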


\begin{proof}
Let $A:= R \hot \Om_{\K[t]}$, which is a commutative DG algebra. There
are induced homomorphisms $\sg_i : A \to R$.
And there is an induced $A$-multilinear $\mrm{L}_{\infty}$ morphism 
\[ \Phi_{A} : \m \hot \Om_{\K[t]} \hot \g \to 
\m \hot \Om_{\K[t]} \hot \h  \]
(cf.\ \cite[Section 3]{Ye2}).
Let us write 
$\opn{MC}(\g, A) := \opn{MC}(\m \hot \Om_{\K[t]} \hot \g)$ etc.
There is a function $\opn{MC}(\Phi, A)$ whose formula is like in Definition
\ref{dfn:6}. Since the $\mrm{L}_{\infty}$ morphisms $\Phi_R$ and
$\Phi_A$ are induced from $\Phi$,
the diagram of functions 
\[ \UseTips \xymatrix @C=11ex @R=6ex {
\opn{MC}(\g, A) 
\ar[r]^{\opn{MC}(\Phi, A)}
\ar[d]_{\opn{MC}(\sg_i, R)}
&
\opn{MC}(\h, A) 
\ar[d]^{\opn{MC}(\sg_i, R)}
\\
\opn{MC}(\g, R) 
\ar[r]^{\opn{MC}(\Phi, R)}
&
\opn{MC}(\h, R) 
} \]
is commutative, for $i = 0, 1$.
Now suppose $\om_0, \om_1 \in \opn{MC}(\g, R)$ are gauge equivalent. By Lemma
\ref{lem:17} there is an element $\om(t) \in \opn{MC}(\g, A)$ such that 
$\om(i) = \om_i$. Define
$\chi_i := \opn{MC}(\Phi, R)(\om_i)$ and 
$\chi(t) := \opn{MC}(\Phi, A)(\om(t))$. 
Because the diagram above is commutative, we have
$\chi(i) = \chi_i$. Using Lemma \ref{lem:17} again we conclude that 
$\chi_0$ and $\chi_1$ are gauge equivalent.
\end{proof}

Let $\Psi : C \to D$ be the morphism in $\cat{DGCog}(\K)$ gotten by applying
the functor $\opn{C}_{\infty}$ to the $\opn{L}_{\infty}$ morphism 
$\Phi : \g  \to \h$. And let 
$\til{\Psi} : \til{C} \to \til{D}$ be the morphism gotten by applying
the functor $\opn{C} \circ \opn{L}$ to $\Psi : C \to D$. 
Since $\th_-$ is a natural transformation, we get a
commutative diagram 
\begin{equation} \label{eqn:62}
\UseTips \xymatrix @C=7ex @R=5ex {
C
\ar[r]^{\Psi}
\ar[d]_{\th_C}
&
D
\ar[d]^{\th_D}
\\
\til{C}
\ar[r]^{\til{\Psi}}
&
\til{D}
} 
\end{equation}
in $\cat{DGCog}(\K)$.
There is a corresponding commutative diagram 
\begin{equation} \label{eqn:63}
\UseTips \xymatrix @C=7ex @R=5ex {
\g
\ar[r]^{\Phi}
\ar[d]_{\th_{\g}}
&
\h
\ar[d]^{\th_{\h}}
\\
\til{\g}
\ar[r]^{\til{\Phi}}
&
\til{\h}
} 
\end{equation}
in $\cat{DGLie}_{\infty}(\K)$. Namely 
$\til{\g} = (\opn{L} \circ \opn{C})(\g)$,
$\til{\h} = (\opn{L} \circ \opn{C})(\h)$, 
and the full faithful functor $\opn{C}_{\infty}$ sends the 
diagram (\ref{eqn:63}) to the diagram (\ref{eqn:62}).
Note that by the proof of Lemma \ref{lem:20}, the Taylor coefficients
$\pa^j \th_C$ are nonzero for all $j$; so the corresponding morphism 
$\th_{\g} : \g \to \til{\g}$ in $\cat{DGLie}_{\infty}(\K)$ is not a DG Lie
algebra homomorphism. The same for $\th_{\h}$.

\begin{lem} \label{lem:18}
The diagram of functions
\begin{equation} \label{eqn:65}
\UseTips \xymatrix @C=11ex @R=6ex {
\opn{MC}(\g, R)
\ar[r]^{\opn{MC}(\Phi, R)}
\ar[d]_{\opn{MC}(\th_{\g}, R)}
&
\opn{MC}(\h, R)
\ar[d]^{\opn{MC}(\th_{\h}, R)}
\\
\opn{MC}(\til{\g}, R)
\ar[r]^{\opn{MC}(\til{\Phi}, R)}
&
\opn{MC}(\til{\h}, R)
} 
\end{equation}
is commutative.
\end{lem}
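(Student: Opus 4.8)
The plan is to deduce this square from a single general principle: the assignment $\Phi \mapsto \opn{MC}(\Phi, R)$ is \emph{functorial} with respect to composition of $\mrm{L}_{\infty}$ morphisms. Granting this, the lemma is immediate. Indeed, the full faithful functor $\opn{C}_{\infty}$ carries the commutative square \tup{(\ref{eqn:63})} of $\mrm{L}_{\infty}$ morphisms to the commutative square \tup{(\ref{eqn:62})} in $\cat{DGCog}(\K)$, so $\th_{\h} \circ \Phi = \til{\Phi} \circ \th_{\g}$ as $\mrm{L}_{\infty}$ morphisms $\g \to \til{\h}$. Applying the (to-be-established) functoriality of $\opn{MC}(-, R)$ to both composites gives
\[ \opn{MC}(\th_{\h}, R) \circ \opn{MC}(\Phi, R) = \opn{MC}(\th_{\h} \circ \Phi, R) = \opn{MC}(\til{\Phi} \circ \th_{\g}, R) = \opn{MC}(\til{\Phi}, R) \circ \opn{MC}(\th_{\g}, R) , \]
which is exactly the commutativity of \tup{(\ref{eqn:65})}.

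To prove the functoriality I would pass to the coalgebra picture and encode $\opn{MC}(\Phi, R)$ by group-like elements. Write $\Psi := \opn{C}_{\infty}(\Phi)$, a homomorphism of graded coalgebras $\opn{C}(\g) \to \opn{C}(\h)$, and let $\Psi_{R}$ denote its $R$-multilinear, $\m$-adically completed extension $\what{\opn{C}}(\m \hot \g) \to \what{\opn{C}}(\m \hot \h)$, as in \cite[Section 3]{Ye2}. For $\om \in \m \hot \g^1$ form the group-like element $\mrm{e}^{\om} := \sum_{i \geq 0} \smfrac{1}{i!}\, \om^{\cdot i}$ of $\what{\opn{C}}(\m \hot \g) = \what{\opn{Sym}}(\m \hot \g[1])$ (the sum converges $\m$-adically). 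Since a coalgebra homomorphism sends group-like elements to group-like elements, $\Psi_{R}(\mrm{e}^{\om})$ is again group-like with counit $1$; over a field of characteristic $0$ every such element of a completed symmetric coalgebra is the exponential of its order-$1$ component $\ln(-)$. Reading off Taylor coefficients and comparing with Definition \ref{dfn:6} gives the key identity
\[ \Psi_{R}(\mrm{e}^{\om}) = \mrm{e}^{\, \opn{MC}(\Phi, R)(\om)} , \qquad \text{i.e.} \qquad \opn{MC}(\Phi, R)(\om) = \ln \bigl( \Psi_{R}(\mrm{e}^{\om}) \bigr) . \]

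Functoriality now follows formally. If $\Phi' : \h \to \mathfrak{k}$ is a second $\mrm{L}_{\infty}$ morphism with $\Psi' := \opn{C}_{\infty}(\Phi')$, then $\opn{C}_{\infty}(\Phi' \circ \Phi) = \Psi' \circ \Psi$ because $\opn{C}_{\infty}$ is a functor, and the $R$-extension is compatible with composition, so $(\Psi' \circ \Psi)_{R} = \Psi'_{R} \circ \Psi_{R}$. Hence $\Psi'_{R}(\Psi_{R}(\mrm{e}^{\om})) = \Psi'_{R}(\mrm{e}^{\opn{MC}(\Phi, R)(\om)}) = \mrm{e}^{\opn{MC}(\Phi', R)(\opn{MC}(\Phi, R)(\om))}$, and applying $\ln$ yields $\opn{MC}(\Phi' \circ \Phi, R) = \opn{MC}(\Phi', R) \circ \opn{MC}(\Phi, R)$. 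Note this computation lives purely at the level of graded coalgebras and group-like elements, so it is valid for every $\om \in \m \hot \g^1$; restricting to $\opn{MC}(\g, R)$, where the values land in the appropriate MC sets by Proposition \ref{prop:4}, gives what we need.

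The routine parts above are the two compatibilities that I would either verify directly or cite from \cite[Section 3]{Ye2}: that the $R$-multilinear completed extension $\Psi \mapsto \Psi_{R}$ is functorial in $\Psi$, and that the formula of Definition \ref{dfn:6} is precisely $\ln \circ \Psi_{R}$ evaluated on $\mrm{e}^{\om}$. The main obstacle is the characteristic-$0$ structural fact that group-like elements of the completed symmetric coalgebra are exactly exponentials of their primitive part, together with the convergence bookkeeping needed to manipulate the infinite sums $\mrm{e}^{\om}$ inside $\what{\opn{C}}(\m \hot \g)$; once this is in place the rest is formal. As a sanity check, one should keep in mind that $\th_{\g}$ and $\til{\Phi}$ are genuine $\mrm{L}_{\infty}$ (not DG Lie) morphisms, so the argument really must use the full $\mrm{L}_{\infty}$-functoriality of $\opn{MC}(-, R)$ rather than the easier statement for strict homomorphisms.
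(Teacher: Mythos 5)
Your proposal is correct and its core mechanism is exactly the paper's: the identity
$\opn{MC}(\Phi, R)(\om) = \log \bigl( \Psi_R(\exp(\om)) \bigr)$
on group-like elements (this is \cite[Lemma 3.18]{Ye2}, which the paper cites) combined with the commutativity of the coalgebra square (\ref{eqn:62}). There are two real differences. First, you prove the stronger statement that $\opn{MC}(-, R)$ is functorial under composition of $\mrm{L}_{\infty}$ morphisms and then specialize; the paper verifies only the specific square. Your strengthening is clean and costs nothing once the key identity is in place. Second, and more importantly, you work inside the $\m$-adically completed symmetric coalgebra and must establish the ``group-like $=$ exponential of a primitive'' correspondence there, which you rightly flag as your main obstacle. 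The paper sidesteps this entirely: its first step invokes Lemma \ref{lem:6}(2) to reduce to artinian $R$. Since $\opn{MC}(\g, R) \cong \lim_{\leftarrow j} \opn{MC}(\g, R_j)$ and all four maps in (\ref{eqn:65}) commute with the projections $p_j$ (the Taylor coefficients being $R$-multilinear), commutativity may be checked over each $R_j$; and over artinian $R$ the element $\exp(\om) = \sum_i \smfrac{1}{i!} \om^i$ is a \emph{finite} sum in the honest coalgebra $R \ot C$, because $\om^i \in \m^i \ot \opn{Sym}^i(\g[1])$ vanishes for large $i$. No completed coalgebras or convergence bookkeeping are then needed. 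I recommend adding that artinian reduction as your opening move: it turns your acknowledged obstacle into a triviality, after which your functoriality argument goes through verbatim with \cite[Lemma 3.18]{Ye2} supplying the $\exp$/$\log$ identity.
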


\begin{proof}
Because of Lemma \ref{lem:6}(2) we can assume that $R$ is artinian.
Consider the commutative diagram of DG coalgebras over $R$
\begin{equation} \label{eqn:67}
\UseTips \xymatrix @C=11ex @R=6ex {
R \ot C
\ar[r]^{\Psi_R}
\ar[d]_{\th_{C, R}}
&
R \ot D
\ar[d]^{\th_{D, R}}
\\
R \ot \til{C}
\ar[r]^{\til{\Psi}_R}
&
R \ot \til{D}
} 
\end{equation}
induced from (\ref{eqn:62}) by tensoring with $R$. 

Take any $\om \in \m \ot \g^1 \subset R \ot C$, and define 
\[ e := \exp(\om) = \sum_{i \geq 0} \, \smfrac{1}{i!} \om^i \in R \ot C . \]
According to \cite[Lemma 3.18]{Ye2} we have
\[ \begin{aligned}
& \bigl( \opn{MC}(\th_{\h}, R) \circ \opn{MC}(\Phi, R) \bigr)(\om) =
\log \bigl( (\th_{D, R} \circ \Psi_R)(e) \bigr) = \\
& \qquad \log \bigl( (\til{\Psi}_R \circ \th_{C, R})(e) \bigr) =
\bigl( \opn{MC}(\til{\Phi}, R) \circ \opn{MC}(\th_{\g}, R) \bigr)(\om) . 
\end{aligned} \]
This proves commutativity of the diagram.
\end{proof}

\begin{thm} \label{thm:3}
Let $\g$ and $\h$ be DG Lie algebras, 
let $\Phi : \g \to \h$ be an $\mrm{L}_{\infty}$ quasi-isomorphism, and
let $R$ be a parameter algebra, all over the field $\K$. 
Then the function 
\[ \ol{\opn{MC}}(\Phi, R) : \ol{\opn{MC}}(\g, R) \to 
\ol{\opn{MC}}(\h, R) \]
\tup{(}see Proposition \tup{\ref{prop:5})} is bijective. 
\end{thm}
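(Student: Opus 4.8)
The plan is to use the bar-cobar construction to replace the $\mrm{L}_{\infty}$ quasi-isomorphism $\Phi$ by a genuine DG Lie algebra quasi-isomorphism, and then invoke Theorem \ref{thm:2}. Concretely, set $\til{\g} := (\opn{L} \circ \opn{C})(\g)$ and $\til{\h} := (\opn{L} \circ \opn{C})(\h)$ and consider the commutative square (\ref{eqn:63}) of $\mrm{L}_{\infty}$ morphisms, whose vertical arrows are the adjunction units $\th_{\g} : \g \to \til{\g}$ and $\th_{\h} : \h \to \til{\h}$ and whose bottom arrow is $\til{\Phi}$. The point of passing to these cobar resolutions is that $\til{\Phi}$ is an honest DG Lie algebra homomorphism, and by Lemma \ref{lem:21} it is a quasi-isomorphism because $\Phi$ is. I note that $\til{\g}$ and $\til{\h}$ are unbounded in the negative direction, so it is essential that Theorem \ref{thm:2} carries no boundedness hypotheses; this is exactly why the argument works in the pronilpotent setting.

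First I would establish that $\ol{\opn{MC}}(\th_{\g}, R)$ and $\ol{\opn{MC}}(\th_{\h}, R)$ are bijections. The adjunction counit $\zeta_{\g} : \til{\g} \to \g$ is a DG Lie algebra quasi-isomorphism, so $\ol{\opn{MC}}(\zeta_{\g}, R)$ is bijective by Theorem \ref{thm:2}. On the other hand, Lemma \ref{lem:20} gives the triangle identity $\opn{C}(\zeta_{\g}) \circ \th_C = \bsym{1}_C$, and since $\opn{C}_{\infty}$ is faithful this says $\zeta_{\g} \circ \th_{\g} = \bsym{1}_{\g}$ in $\cat{DGLie}_{\infty}(\K)$. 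Applying the (functorial) construction $\ol{\opn{MC}}(-, R)$ yields $\ol{\opn{MC}}(\zeta_{\g}, R) \circ \ol{\opn{MC}}(\th_{\g}, R) = \opn{id}$; as the left factor is bijective, $\ol{\opn{MC}}(\th_{\g}, R)$ must equal its inverse and is therefore bijective. The identical argument applies to $\h$.

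The remaining step assembles the square. By Lemma \ref{lem:18}, together with Proposition \ref{prop:5} which lets it descend to gauge-equivalence classes, the diagram
\[ \ol{\opn{MC}}(\th_{\h}, R) \circ \ol{\opn{MC}}(\Phi, R) =
\ol{\opn{MC}}(\til{\Phi}, R) \circ \ol{\opn{MC}}(\th_{\g}, R) \]
commutes. Here $\ol{\opn{MC}}(\til{\Phi}, R)$ is bijective by Theorem \ref{thm:2}, since $\til{\Phi}$ is a DG Lie algebra quasi-isomorphism. Hence
\[ \ol{\opn{MC}}(\Phi, R) = \ol{\opn{MC}}(\th_{\h}, R)^{-1} \circ
\ol{\opn{MC}}(\til{\Phi}, R) \circ \ol{\opn{MC}}(\th_{\g}, R) \]
is a composite of three bijections, and the theorem follows.

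The one point requiring genuine care is the passage between $\mrm{L}_{\infty}$ morphisms and their effect on $\ol{\opn{MC}}$: I must know that $\ol{\opn{MC}}(-, R)$ is a functor on $\cat{DGLie}_{\infty}(\K)$, so that the identity $\zeta_{\g} \circ \th_{\g} = \bsym{1}_{\g}$ and the commutativity of (\ref{eqn:63}) transport correctly. This functoriality is supplied by the coalgebra description $\exp(\opn{MC}(\Phi, R)(\om)) = \Psi_R(\exp(\om))$ for $\Psi = \opn{C}_{\infty}(\Phi)$ (the formula underlying the proof of Lemma \ref{lem:18}), combined with the fact that composition of $\mrm{L}_{\infty}$ morphisms is composition of the associated coalgebra maps; Proposition \ref{prop:5} then pushes this down to gauge classes. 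Once this bookkeeping is in place, the proof is purely formal, all the substantive work having been done in Theorem \ref{thm:2} and Lemmas \ref{lem:20} and \ref{lem:21}.
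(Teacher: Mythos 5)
Your proposal is correct and follows essentially the same route as the paper: reduce to the DG Lie quasi-isomorphism $\til{\Phi}$ via Lemma \ref{lem:21}, show $\ol{\opn{MC}}(\th_{\g}, R)$ and $\ol{\opn{MC}}(\th_{\h}, R)$ are bijective by combining Theorem \ref{thm:2} applied to $\zeta_{\g}$ with the triangle identity of Lemma \ref{lem:20} (transported to MC sets by the exponential formula underlying Lemma \ref{lem:18}), and conclude by the three-out-of-four argument on the commutative square induced from (\ref{eqn:63}). You even correctly isolate the one delicate point -- functoriality of $\opn{MC}(-, R)$ on $\cat{DGLie}_{\infty}(\K)$ via the coalgebra description -- which is exactly how the paper handles it.
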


The idea for the proof was suggested to us by Van den Bergh.

\begin{proof}
By Lemma \ref{lem:21} the DG Lie algebra homomorphism 
$\til{\Psi} : \til{\g} \to \til{\h}$ is a quasi-isomorphism.
Therefore by Theorem \ref{thm:2} the function 
\[  \ol{\opn{MC}}(\til{\Phi}, R) : \ol{\opn{MC}}(\til{\g}, R) \to 
\ol{\opn{MC}}(\til{\h}, R) \]
is bijective.

Next, by \cite[Proposition 4.4.3(1)]{Hi2} the DG Lie algebra homomorphism
$\zeta_{\g} : \til{\g} \to \g$ is a  quasi-isomorphism.
Again using Theorem \ref{thm:2} we conclude that the function 
\[  \ol{\opn{MC}}(\zeta_{\g}, R) : \ol{\opn{MC}}(\til{\g}, R) \to 
\ol{\opn{MC}}(\g, R) \]
is bijective. On the other hand, by Lemma \ref{lem:20}, with the arguments in
the proof of Lemma \ref{lem:18}, we see that 
\[ \opn{MC}(\zeta_{\g}, R) \circ \opn{MC}(\th_{\g}, R) = 
\bsym{1}_{\opn{MC}(\g, R)} . \]
Therefore 
\[ \ol{\opn{MC}}(\zeta_{\g}, R) \circ \ol{\opn{MC}}(\th_{\g}, R) = 
\bsym{1}_{\ol{\opn{MC}}(\g, R)} . \]
Because $\ol{\opn{MC}}(\zeta_{\g}, R)$ is a bijection, the same is true for the
function $\ol{\opn{MC}}(\th_{\g}, R)$. 
The same line of reasoning says that 
$\ol{\opn{MC}}(\th_{\h}, R)$ is bijective. 

Finally consider the commutative diagram of functions
\[ \UseTips \xymatrix @C=11ex @R=6ex {
\ol{\opn{MC}}(\g, R)
\ar[r]^{\ol{\opn{MC}}(\Phi, R)}
\ar[d]_{\ol{\opn{MC}}(\th_{\g}, R)}
&
\ol{\opn{MC}}(\h, R)
\ar[d]^{\ol{\opn{MC}}(\th_{\h}, R)}
\\
\ol{\opn{MC}}(\til{\g}, R)
\ar[r]^{\ol{\opn{MC}}(\til{\Phi}, R)}
&
\ol{\opn{MC}}(\til{\h}, R)
} \]
induced from (\ref{eqn:65}). 
We know that three of the arrows are bijective; so the fourth arrow, namely 
$\ol{\opn{MC}}(\Phi, R)$, is also bijective.
\end{proof}

\begin{rem} \label{rem:1}
Lemma 3.5 in our earlier paper \cite{Ye1} says that the canonical function 
\[ \ol{\opn{MC}}(\g, R) \to \lim_{\leftarrow j} \,
\ol{\opn{MC}}(\g, R / \m^j) \]
is bijective. The proof of this Lemma
(which is actually omitted from the paper) is incorrect. Moreover, we
suspect that statement itself is false. The hidden assumption was that 
the gauge group $\opn{G}(\g, R)$ acts on the set $\opn{MC}(\g, R)$
with {\em closed} orbits. 

This lemma is used to deduce \cite[Corollary 3.10]{Ye1} (incorrectly) 
from the nilpotent case. The correction is to replace 
\cite[Corollary 3.10]{Ye1} with Theorem \ref{thm:3} above.
The same correction pertains also to \cite[formula (12.2)]{Ye4}.
\end{rem}

\begin{rem}
This is a good place to correct a typographical error (repeated several
times) in \cite[Section 3]{Ye2}. In Lemmas 3.14 and 3.18 of op.\ cit., instead
of ``$\m \g [1]$'' the correct expression should be ``$\m \g^1$'' or
``$(\m \g [1])^0$''.

Let us also mention that a ``colocal coalgebra homomorphism'', in the sense of 
\cite[Definition 3.3]{Ye2}, is the same as a ``unital coalgebra homomorphism''.
\end{rem}


\end{document}